\theoremstyle{plain}
\newtheorem{theorem}{Theorem}[section]
\newtheorem{lemma}[theorem]{Lemma}
\newtheorem{corollary}[theorem]{Corollary}
\theoremstyle{remark}
\newtheorem{remark}[theorem]{Remark}
\numberwithin{equation}{section}
\def\a{\alpha}
\def\b{\beta}
\def\d{\delta}
\def\e{\varepsilon}
\def\eps{\epsilon}
\def\g{\gamma}
\def\G{\Gamma}
\def\l{\lambda}
\def\p{\partial}
\def\D{\Delta}
\def\E{\mbox{\rm e}}
\def\Odr{\mathcal{O}}
\def\di{\,\mathrm{d}}
\def\Ups{\Upsilon}
\def\I{\mathrm{I}}
\def\la{\langle}
\def\ra{\rangle}
     \newcommand{\CC}{\mathbb{C}}
     \newcommand{\EE}{\mathbb{E}}
     \newcommand{\NN}{\mathbb{N}}
     \newcommand{\PP}{\mathbb{P}}
     \newcommand{\RR}{\mathbb{R}}
     \newcommand{\ZZ}{\mathbb{Z}}
     \newcommand{\cH}{\mathcal{H}}
     \newcommand{\cQ}{\mathcal{Q}}
     \newcommand{\cT}{\mathcal{T}}
\newcommand{\BIGOP}[1]{\mathop{\mathchoice%
{\raise-0.22em\hbox{\huge $#1$}}%
{\raise-0.05em\hbox{\Large $#1$}}{\hbox{\large $#1$}}{#1}}}
\newcommand{\BIGboxplus}{\mathop{\mathchoice%
{\raise-0.35em\hbox{\huge $\boxplus$}}%
{\raise-0.15em\hbox{\Large $\boxplus$}}{\hbox{\large $\boxplus$}}{\boxplus}}}
\DeclareMathOperator{\dist}{dist}
\DeclareMathOperator{\supp}{supp}
\DeclareMathOperator{\Det}{det}
\newcommand{\per}{\mathrm{per}}
\newcommand{\hm}[1]{\leavevmode{\marginpar{\tiny%
$\hbox to 0mm{\hspace*{-0.5mm}$\leftarrow$\hss}%
\vcenter{\vrule depth 0.1mm height 0.1mm width \the\marginparwidth}%
\hbox to 0mm{\hss$\rightarrow$\hspace*{-0.5mm}}$\\\relax\raggedright #1}}}
 \thanks{ \copyright 2013 by the authors. Faithful reproduction of this article, is permitted for non-commercial purposes.
 {\today, \jobname.tex}}
\begin{document}

\allowdisplaybreaks

\title[Spectral properties of randomly curved  quantum waveguides]
{Low lying eigenvalues of randomly curved  quantum waveguides}
\author[Borisov and Veseli\'c]{Denis Borisov$^{1,2,3,4}$ and  Ivan Veseli\'c$^{1,5}$}
\keywords{lower bounds on eigenvalues, random Hamiltonian, weak disorder, random curvature, random quantum waveguide, low-lying spectrum, asymptotic analysis, Anderson localization}
\subjclass[2010]{35P15, 35C20, 60H25, 82B44}

\begin{abstract}
We consider the negative Dirichlet Laplacian on an infinite waveguide embedded in $\RR^2$,
and finite segments thereof.  The waveguide is a perturbation of a periodic strip
in terms of a sequence of independent identically distributed random variables which
influence the curvature.
We derive explicit
lower bounds on the first eigenvalue of finite segments of the randomly curved waveguide
in the small coupling (i.e.~weak disorder) regime. This allows us to estimate the probability
of low lying eigenvalues, a tool which is relevant in the context of Anderson localization
for random Schr\"odinger operators.
\end{abstract}

\maketitle

\begin{itemize}
\item[(1)] \emph{Faculty of Mathematics, Technische Universit\"at Chemnitz, 09107 Chemnitz, Germany}
\item[(2)] \emph{Institute of Mathematics Of Ufa Scientific Center
of RAS, Chernyshevskogo, 112, Ufa, 450008, Russia}
\item[(3)] \emph{Department of Physics and Mathematics, Bashkir
State Pedagogical University, October rev. st.~3a, Ufa, 450000,
Russia}
\item[(4)] \emph{E-mail:}\ \texttt{borisovdi@yandex.ru}, \emph{URL:} \texttt{http://borisovdi.narod.ru/}
\item[(5)] \emph{URL:}\ \texttt{http://www.tu-chemnitz.de/mathematik/stochastik/}
\end{itemize}

\tableofcontents

\section{Introduction}
\label{s:introduction}

In this paper we study eigenvalues
of finite segments of randomly curved (quantum) waveguides.
More precisely we derive lower bounds on the bottom of the spectrum
of a waveguide segment of length $L$ with  perturbation of (a small) size $\kappa$.
The bounds are formulated in terms of the parameters $L$ and $\kappa$.
In this context lower bounds are more challenging than upper ones.
The waveguide consists of the set of points in $\RR^2$ which have a distance
smaller than $\pi/2$ (half the width of the waveguide)
to a random perturbation of a periodic curve in the plane.
We consider the negative Laplace operator on this set with appropriate boundary conditions.
As it has compact resolvent it has purely discrete, lower semi-bounded spectrum.

Since the perturbation in each periodicity cell of the waveguide is
determined by an individual scalar random variable,
the above bound can be considered as a multi-parameter variational problem.
The main difficulty arises from the fact that the length of the waveguide
is finite, but not fixed; indeed it tends to infinity.
This also means that the number of variational parameters becomes
arbitrarily large. To compensate this, we have to chose the size $\kappa$ of
the perturbations as a function of the length $L$. This is a special case of what is
commonly called a \emph{weak disorder} regime.

The estimates on lowest eigenvalues we derive are motivated by
the question whether a random Hamiltonian on a non-compact space has localized or
non-localized states. One specific question in this context is:
Is there a interval containing the bottom of the spectrum of the
random Hamiltonian where the eigenvalues lie dense and where there
is no continuous spectral component almost surely.
The two first models where this question has been answered positively
is the so called Russian-school-model (in \cite{GoldsheidMP-77})
and the Anderson model (in \cite{FroehlichS-83,FroehlichMSS-85}).
In the later papers the so-called \emph{multiscale analysis}
was introduced as a method for proving (Anderson) localization,
i.e.~dense pure-point spectrum, with exponentially decaying eigenfunctions, almost surely.
This method is an induction over increasing finite length scales.
On each scale one considers a restricted Hamiltonian which lives on a finite volume.
An important input for the multiscale analysis
is the induction anchor, which is called \emph{initial scale estimate}.
It is this estimate which follows from our eigenvalue bounds,
for the case  that the random Hamiltonian is the negative Laplacian
on a infinite randomly curved waveguide.
The mentioned finite volume restrictions correspond to
finite segments of the random waveguide.

In a previous paper \cite{BorisovV-11} we have studied the analogous problem for a
different type of random waveguide, which we will call for definitness
\emph{randomly shifted waveguide}, cf. Section \ref{s:comparison} for precise definitions
and a comparison of the two models. Somewhat surprisingly, although the two types or
random waveguides seem very similar, a very basic feature is different:
namely, the configuration, which produces the lowest first eigenvalue.
In one case the optimal configuration corresponds
to zero randomness, in the other to maximal randomness.
The model studied here is more difficult to analyse than the one of \cite{BorisovV-11}.
On the other hand it is also more natural, since it corresponds to the standard way how
local curvature perturbations are introduced for waveguides, cf.~e.g.~\cite{ExnerS-89}.

An interesting feature of our model in comparison to other extensively studied
random Schr\"odinger operators, e.g.~the above-mentioned Anderson-model,
is the geometric influence of the randomness.
Another model with geometric randomness, which as recently
attracted much attention, is the random displacement model, cf.{} e.g.{}
\cite{Klopp-93,BakerLS-08,GhribiK-10,KloppLNS}.
One important feature of models with geometric disorder is that
they exhibit no obvious \emph{monotonicity with respect to the individual
random variables}. This is the case for the
random displacement model,
for the randomly shifted waveguide (studied in \cite{BorisovV-11}),
as well as for the randomly curved waveguide, to which the present paper is devoted.
We refer the interested reader to the Introduction of \cite{BorisovV-11},
where  we have discussed in some
detail the challanges non-monotonicity poses in the spectral analysis of
random Hamiltonians.

There are also differences in the difficulties these models pose:
for the  random displacement model it is non-trivial to identify (all)
optimizing configurations of the random parameters.
(Here optimizing means: minimizing the first eigenvalue.)
This is easier for the two mentioned models of random waveguides,
although the optimal configurations turn out to be different in the
two cases, cf. Section \ref{s:comparison}.
On the other hand, in the random displacement model the random variables enter
the potential, i.e.~the zero order term, only, where in the waveguide models the higher order terms of
the differential operator depend on the randomness.

We would like to stress that localization for a certain type of random waveguides
was established already in \cite{KleespiesS-00}.
There the randomness enters via a variation of the width of the waveguide.
This type of perturbation leads to a quadratic form which depends monotonously on the
random variables and is thus structurally different from our model.

As already mentioned, the main challange in our analysis is the non-monotone dependence of
the Hamiltonian on the random variables.
We have thus to overcome difficulties as they are encountered in other types of random Hamiltonians
which exhibit a non-monotone dependence on the randomness.
The model of this type to which most attention was devoted
so far is the alloy-type random Schr\"odinger operators
with single site potentials of changing sign,
see e.g.{}
\cite{Klopp-95a}, \cite{Stolz-00}, \cite{Veselic-01}, \cite{Klopp-02c}, \cite{Veselic-02a},
\cite{HislopK-02}, \cite{KostrykinV-06}, \cite{KloppN-09a}, \cite{Veselic-10a}.
More recently also the discrete analog of this model was studied in
\cite{ElgartTV-10}, \cite{Veselic-10b}, \cite{ElgartTV-11},
\cite{TautenhahnV-10}, \cite{Krueger-12}, \cite{CaoE-12},
\cite{ElgartSS-12}, \cite{TautenhahnV}.
Electromagnetic Schr\"odinger operators with random magnetic field
\cite{Ueki-94}, \cite{Ueki-00a}, \cite{HislopK-02}, \cite{KloppNNN-03}, \cite{Ueki-08},
\cite{Bourgain-09}, \cite{ErdoesH-a}, \cite{ErdoesH-c}, \cite{ErdoesH-b},
as well as Laplace-Beltrami operators with random metrics \cite{LenzPV-04}, \cite{LenzPPV-08},
\cite{LenzPPV-09} exhibit a non-monotonous parameter dependence which
affects the higher order terms of the differential operator.
Another relevant model (although not defined in terms of a countable family of random parameters)
without obvious monotonicity is a random potential given by a Gaussian stochastic
field with sign-changing covariance function,
c.f.{} \cite{HupferLMW-01a}, \cite{Ueki-04}, \cite{Veselic-11}.


We already mentioned that our results in this paper and in \cite{BorisovV-11}
provide an important ingredient for the implementation of the multiscale analysis,
a way of proving spectral localization for various types of random Hamiltonians.
The second main ingredient needed to complete this proof is a Wegner type estimate, named after the paper
\cite{Wegner-81}. For the model studied by Kleespies and Stolllmann in \cite{KleespiesS-00} a Wegner estimate, and thus also spectral localization was established.
This is facilitated by the fact that in this model the randomness enters via a variation of the width of the waveguide, resulting in a monotone dependence
on the randomness.
For the model of a randomly shifted waveguide, which we studied in \cite{BorisovV-11}, it is quite likely that
the method used in \cite{GhribiHK-07} can be developed to yield {a Wegner estimate} in the weak disorder regime.
For the model studied in this paper the situation is more complicated, since the quadratic form is not a rational function of the random variables. Anyway, for both the models, the Wegner estimate is an interesting and still open problem.

%

\section{Main deterministic results}
\label{s:deterministic-results}

Let $l\geq 1$, $l \geq  a >0$  and $g\in C_0^4(\RR)$ be an function with support in $[0,a]$.
Let ${\kappa}\geqslant 0$
and $\rho:=(\rho_j)_{j\in \ZZ}$ be a sequence of reals $\rho_j \in [0, {\kappa}]$.
We introduce the function
\[
G(\cdot ,\rho)\colon \RR \to \RR, \quad G(x_1,\rho):=\sum_{j\in \ZZ} \rho_j \, g_j(x_1),
\]
where $g_j(x_1):=g(x_1-jl)$ is the translate of the function $g$ by $jl$ to the right.
Consider the curve $\Ups_\rho :=\{r_\rho(t) \mid  t\in\RR\}\subset \RR^2$
determined by the equation
\begin{equation*}
r_\rho(t):=\big(t,G(t,\rho)\big), \quad t\in\RR.
\end{equation*}
By $\nu_\rho=\nu_\rho(t)$ we denote the unit normal vector to $\Ups_\rho$ at the point $r_\rho(t)$
defined by
\begin{equation*}
\nu_\rho(t)=\left(-\frac{
G'(t,\rho)}{P_1^{1/2}(t,\rho)},\frac{1}{P_1^{1/2}(t,\rho)}
\right), \quad P_1(t,\rho):=1+\big(G'(t,\rho)\big)^2,
\end{equation*}
for $t\in\RR$. Here $'$ denotes the derivative w.r.t. $t$. We suppose that
\begin{equation}\label{2.3}
\|g\|_{C^4[0,l]}=1. 
\end{equation}
In the following when we say that a quantity does not depend on $g$ or $a$,
we mean that it can be chosen uniformly for all elements satisfying conditions  (\ref{2.3}).

In a vicinity of $\Ups_\rho$ we introduce new coordinates
$\xi:=(\xi_1,\xi_2)$ by the formula $x=x(\xi)=r_\rho(\xi_1)+\xi_2\nu_\rho(\xi_1)$.
In other words, given $\xi_1$, we take a point at $\Ups_\rho$ corresponding to $t=\xi_1$, and then shift along the direction of $\nu_\rho(\xi_1)$ by the distance $\xi_2$. We note that the coordinates $\xi$ depend on $\rho$, and are well-defined, if ${\kappa}$ is small enough.

We introduce a randomly curved waveguide  segment
\[
D_{\rho}(N,j) :=\{x: jl<\xi_1<(j+N)l,\  0<\xi_2<\pi\}.
\]
Denote by $\G_{\rho}(N,j)$ the upper and
lower part of the boundary of $D_{\rho}(N,j)$, i.e.,
\begin{equation*}
\G_{\rho}(N,j):=\{x: jl<\xi_1<(j+N)l, \xi_2=0\} \cup\{x:
jl<\xi_1<(j+N)l, \xi_2=\pi\}.
\end{equation*}
The remaining part of the boundary $\p
D_{\rho}(N,j)\setminus\overline{\G_{\rho}(N,j)}$ is
denoted by $\g_{\rho}(N,j)$. If $j=0$, we will use the
following shorthand notation:
$D_{\rho}(N):=D_{\rho}(N,0)$,
$\Gamma_{\rho}(N):=\Gamma_{\rho}(N,0)$,
$\gamma_{\rho}(N):=\gamma_{\rho}(N,0)$.
Note that the length of $D_\rho(N)$ is $L= N \cdot l$.

In order to state the deterministic lower bound on the first eigenvalue,
as a function of the parameters $\rho=(\rho_j)_{j\in\ZZ}$,
we first have to introduce a reference energy.
This will be the spectral bottom of a comparison operator.
More precisely,
we will use the symbol $\cH^{\per}(\rho)$ to denote
the negative Laplacian in $L_2(D_{\rho}(1))$ with
Dirichlet boundary condition on $\G_{\rho}(1)$ and
periodic boundary condition on $\g_{\rho}(1)$.
Note that is this situation the array $\rho=(\rho_j)_{j}$
consists just of the single value $\rho_0$.
So we identify here $\rho_0$ with $\rho$.
Let $\lambda^{\per}(\rho)$ be the lowest eigenvalue of $\cH^{\per}(\rho)$ and $\psi^{\per}=\psi^{\per}(x,\rho)$ be the associated normalized eigenfunction.
We extend the function $\psi^{\per}$ $l$-periodically w.r.t. $\xi_1$ and
denote the extension by the same symbol.

We denote by  $\cH(\rho, N, j)$
the negative Laplacian on
$L_2(D_{\rho}(N,j))$ with Dirichlet boundary conditions on
$\G_{\rho}(N,j)$ and with Robin boundary condition
\begin{equation}
\label{eq:h}
\begin{aligned}
&\left(\frac{\p}{\p x_1}-h(\cdot,{\kappa})\right)u=0\quad\text{on}
\quad \g_{\rho}(N,j), \quad \text{ where }
\\
&h(\cdot,{\kappa})\colon [0,\pi] \to \RR, \quad h(x_2,{\kappa}):=\frac{1}{\psi^{\per}(0,x_2,{\kappa})}\frac{\p\psi^{\per}}{\p x_1}(0,x_2,{\kappa}),
\end{aligned}
\end{equation}
is the logarithmic derivative in $x_2$-direction.
We shall show in Sec.~\ref{Sec:determ} that the function $h$ is well-defined.
In the following we denote for some $s \in \RR$ by $s \cdot \mathbf{1}$ the
constant sequence $\rho_j =s \ (j \in \ZZ)$.
The important point is that $\psi^{\per}(\cdot,\rho)$ is still an eigenfunction
of the operator $\cH(\rho \cdot \mathbf{1},N,j)$ with the new boundary conditions, i.e.
\[
\cH(\rho \cdot \mathbf{1},N,j) \psi^{\per} = \lambda({\rho})\psi^{\per},
\]
where we use for the restriction of $\psi^{\per}$ to the
finite waveguide segment  $D_{\rho \cdot \mathbf{1}}(N,j)$
the same symbol.
This relation was first exploited in the present context  by Mezincescu \cite{Mezincescu-87}.
For this reason we call  (\ref{eq:h}) in the sequel Mezincescu boundary conditions.
We denote the lowest eigenvalue of
$\cH(\rho, N)=\cH(\rho, N, 0)$ by  $\lambda(\rho, N)$.

\begin{theorem}\label{th:deterministic}
Assume $\alpha:= \frac{\pi^3}{32}- a>0$.
There exists a constant $\delta=\delta(a,g)>0$ independent of $\rho$, $l$, $N$
such that for
\begin{equation}\label{eq:small-coupling}
{\kappa}  \leqslant \d L^{-11/2},
\text{ and } 0 \leqslant \rho_i  \leqslant  {\kappa} \text{ for all } i \in 0, \dots N-1
\end{equation}
the estimate
\begin{equation}\label{eq:deterministic-estimate}
\lambda(\rho, N)-\lambda^{\per}({\kappa})
\geqslant
C_{lb}\frac{\kappa}{L} \sum\limits_{j=0}^{N-1}(\kappa-\rho_j)
\end{equation}
holds true with
\[
C_{lb}:=\frac{\|g'\|_{L_2(0,l)}^2}{4} \left(\frac{\pi}{2}-\frac{16a}{\pi^2}\right)
= \frac{4}{\pi^2} \, \|g'\|_{L_2(0,l)}^2\, \alpha.
\]
\end{theorem}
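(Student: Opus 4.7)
The plan is to apply the Mezincescu factorization in curvilinear coordinates and then extract the principal contribution cell by cell.

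First I pass to Frenet-type coordinates $\xi=(\xi_1,\xi_2)$ so that $D_\rho(N)$ becomes the straight rectangle $\Pi_N:=(0,Nl)\times(0,\pi)$, and $\cH(\rho,N)$ becomes a divergence-form elliptic operator whose coefficients are determined by the signed curvature $\mu_\rho$ of $\Upsilon_\rho$. Since $\supp g\subset[0,a]\subset[0,l]$, on the cell $(jl,(j+1)l)$ these coefficients depend only on $\rho_j$. Extend $\psi:=\psi^{\per}(\cdot,\kappa)$ $l$-periodically in $\xi_1$; it is then an eigenfunction of the reference operator obtained by replacing $\rho$ by $\kappa\cdot\mathbf 1$. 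Let $u_0$ be the normalized positive ground state of $\cH(\rho,N)$ and set $u_0=\psi v$. By design, the Mezincescu boundary condition~(\ref{eq:h}) imposed on $u_0$ at $\xi_1\in\{0,Nl\}$ translates into a homogeneous Neumann-type condition on $v$ there, so all boundary contributions from the subsequent integration by parts will vanish.

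Using the eigenvalue identity for $\psi$ with respect to the constant-$\kappa$ reference operator to cancel the $\lambda^{\per}(\kappa)\|u_0\|^2$ term, I expect to reach a clean representation of the schematic form
\[
\bigl(\lambda(\rho,N)-\lambda^{\per}(\kappa)\bigr)\,\|u_0\|_{L_2(D_\rho(N))}^2 = \int_{\Pi_N}\psi^2\,a^{ij}(\xi,\rho,\kappa)\,\partial_i v\,\partial_j v\,d\xi + \sum_{j=0}^{N-1}\cR_j[v,\rho_j,\kappa],
\]
in which the kinetic integrand is pointwise nonnegative and the cell residuals $\cR_j$ are supported in $[jl,jl+a]\times(0,\pi)$ and depend on the single variable $\rho_j$. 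Taylor-expanding each $\cR_j$ around $\rho_j=\kappa$, and using the expansions $\mu_\rho=\rho_j g''+O(\rho_j^3)$ on the $j$-th cell and $\psi=\sqrt{2/\pi}\sin\xi_2+O(\kappa)$, isolates a principal term linear in $(\kappa-\rho_j)$. The crucial factorization $\mu_\kappa^2-\mu_\rho^2=(\kappa-\rho_j)(\kappa+\rho_j)(g'')^2$ produces the advertised factor $\kappa(\kappa-\rho_j)$ upon using $\kappa+\rho_j\geq\kappa$. One integration by parts to move a derivative from $g''$ onto the slowly-varying $\sin^2\xi_2$-type weight replaces $\|g''\|_{L_2}^2$-integrals by $\|g'\|_{L_2}^2$-integrals; combined with $\int_0^\pi\sin^2\xi_2\,d\xi_2=\pi/2$ and a boundary correction $-16a/\pi^2$ coming from the support-in-$[0,a]$ restriction, this reproduces the constant $C_{lb}$. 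The remaining factor $1/L$ comes from the normalization: since $\|\psi\|_{L_2(\Pi_N)}^2\sim L$, the identity $\|\psi v\|=1$ forces $v\sim L^{-1/2}$ pointwise after replacing $v$ by its (weighted) mean.

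The main obstacle is controlling all the remainders \emph{uniformly in $N$}. Higher order Taylor terms bring extra powers of $\kappa$; replacing $v$ by its mean introduces a Poincaré-type error whose constant on the long thin rectangle $\Pi_N$ with Neumann ends scales like $L^2$; and a priori $L^\infty$ and gradient bounds on $v$, obtained from the eigenvalue equation together with uniform positivity of $\psi$ away from $\G_\rho(N)$, carry further polynomial powers of $L$. Each of these $L$-powers multiplies at least $\kappa$, so weak disorder has to be chosen strong enough to absorb them into the main term $C_{lb}\kappa(\kappa-\rho_j)/L$; the specific exponent $-11/2$ in hypothesis~(\ref{eq:small-coupling}) is the smallest one compatible with the accumulated powers of $L$, and establishing the required $L$-dependent quantitative bounds on $v$ is where the bulk of the technical work will lie.
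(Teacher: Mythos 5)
Your overall framework---straighten the waveguide, use the Mezincescu boundary conditions, exploit cell-by-cell dependence on a single parameter $\rho_j$, Taylor-expand around $\rho_j=\kappa$, factor $\kappa^2-\rho_j^2=(\kappa-\rho_j)(\kappa+\rho_j)$---does track the paper. However, you propose a variational ground-state-transformation argument ($u_0=\psi v$ with a sign-definite kinetic term plus cell residuals), whereas the paper proceeds via a reduced-resolvent perturbation expansion (a fixed-point equation for the eigenvalue, leading to the terms $S_1,\dots,S_5$ in equations (\ref{4.24})--(\ref{4.44})). That is a genuinely different route and is not inherently wrong, but your description of the key computational step contains a gap.

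The gap is in the passage from $\|g''\|_{L_2}^2$-type quantities to $\|g'\|_{L_2}^2$-type quantities. You propose to do this by ``one integration by parts to move a derivative from $g''$ onto the slowly-varying $\sin^2\xi_2$-type weight.'' This cannot work as stated: $g''$ depends only on $\xi_1$ and $\sin^2\xi_2$ only on $\xi_2$, so no integration by parts in either variable transfers a derivative from one onto the other. What actually happens in the paper is more delicate: the $\|g''\|^2$ contributions from the first-order diagonal term and from the weight correction cancel identically (see (\ref{4.45})--(\ref{4.50})), and the surviving principal term involves second-order quantities --- inner products of $g''$ against the corrector functions $\psi_1^{\per}$ and $u=\mathcal{R}_0(1)\mathcal{M}^{(0)}\psi_0$, which solve auxiliary elliptic problems (\ref{4.34}) and (\ref{4.48}). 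These are computed by separation of variables (explicit one-dimensional solutions $U_m$, $V_m$ in (\ref{4.68}), (\ref{4.75})--(\ref{4.76})), and only after these explicit solutions are inserted do $\|g'\|_{L_2}^2$-integrals appear (via Fourier coefficients $\beta_m^{(\pm)}$ of $g'$ in (\ref{4.77})--(\ref{4.82})). The constant $\frac{\pi}{2}-\frac{16a}{\pi^2}$ and the resulting sign condition $a<\pi^3/32$ then emerge from careful two-sided estimates on the coefficients $B_m$ in (\ref{4.79}) combined with the Friedrichs-type inequality $\|g'\|_{L_2}\geq(\pi/a)\|g\|_{L_2}$. This cancellation-plus-second-order-corrector structure is the heart of the proof, and without it one does not see why the leading coefficient is $\|g'\|^2$ rather than $\|g''\|^2$, nor where the geometric threshold $a<\pi^3/32$ comes from. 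Your assessment of the source of the $L$-powers (Poincar\'e constant $\sim L^2$ on the thin rectangle, normalization $\sim L$) and of the role of the smallness hypothesis $\kappa\leqslant\delta L^{-11/2}$ is reasonable, but those are not where the difficulty lies.
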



In view of the well known result that
curvature induces eigenvalues below the bottom
of the essential spectrum of the straight quantum waveguide, cf.\cite{ExnerS-89},
it can be considered natural that maximal coupling constants produce the lowest first
eigenvalue in the case of randomly curved waveguides


\section{Probability of low lying eigenvalues}
\label{s:probabilistic-results}

In this section we want to formulate an upper bound on the (small) probability
that a randomly perturbed, finite  waveguide segment has an eigenvalue close to the
overal possible minimal spectral value.
For this we use the deterministic results of perturbation theory,
formulated in Section \ref{s:deterministic-results}.
The probabilistic results we obtain hold in the
so-called weak disorder regime. To make this explicit, we use
in this section a global (scalar) disorder parameter:
For ${\kappa} >0$ we set
\[
 \omega_j := \rho_j/ {\kappa} \quad  (j \in \ZZ).
\]
Then we have the following equivalence for the condition used in Theorem \ref{th:deterministic}
\[
\rho_j \in [0,{\kappa}] \quad  (j \in \ZZ)
\Longleftrightarrow
\omega_j \in [0,1] \quad  (j \in \ZZ)
\]
We consider the parameters $\omega_j, j \in \ZZ$ as a sequence of non-trivial
independent, identically distributed random variables taking values in the unit interval $[0,1]$.
We denote the distribution of the single random variable $\omega_j$ by $\mu$, and by
$\PP$ the product probability measure on $\Omega:= [0,1]^{\ZZ}$ which is the distribution
of the random sequence $\omega_j, j \in \ZZ$. Of course $\Omega$ is equipped with the natural
product sigma-field.
Now the deviations of the parameters from the extremal value,
which determines the lower bound in Theorem \ref{th:deterministic},
can be rewritten as follows:
\[
\kappa-\rho_j = \kappa(1-\omega_j) = \kappa \cdot \tilde\omega_j
\text{ for all }  j \in \ZZ
\]
Here we denote by $\tilde \omega_j=(1-\omega_j)\in[0,1]$ the flipped random variables,
and the multiplication of a sequence
$\omega=(\omega_j)_{j\in \ZZ}$ with a scalar $s\in\RR$ by $s \cdot \omega$.

Now the result of Theorem \ref{th4.2} can be formulated in the new parameters
as follows: \
If $\alpha:= \frac{\pi^3}{32}-a >0$, then there exists a positive constant
$\delta$ independent of $l,N,L$ and of the sequence $\omega=(\omega_j)_{j \in \ZZ}$,
such that for all $\kappa \leq \delta L^{-11/2}$ we have
\begin{equation}
 \label{eq:lower-bound-omega-formulation}
\lambda(\kappa\cdot\omega, N)-\lambda^{\per}({\kappa})
\geqslant
C_{lb}\frac{ {\kappa}^2}{L} \sum_{j=0}^{N-1}\tilde\omega_j
\quad
\text{ where } C_{lb}= \frac{4}{\pi^2} \, \|g'\|_{L_2(0,l)}^2\, \alpha
\end{equation}
Here we denote the lowest eigenvalue of
$\cH(\kappa \cdot \omega, N)=\cH(\rho, N)$
by
$\lambda(\kappa\cdot\omega, N)=\lambda(\rho, N)$.
Recall that $\lambda^{\per}({\kappa})$
is the lowest eigenvalue of
$\cH^{\per}(\rho)$, i.e.~the negative Laplacian in $L_2(D_{\rho}(1))$ with
Dirichlet boundary condition on $\G_{\rho}(1)$ and
periodic boundary condition on $\g_{\rho}(1)$.
This is the lowest achieveable spectral value, if all perturbation parameters
are bounded by $\kappa$.

The announced bound on the probability of low-lying eigenvalues is:
\begin{theorem}
\label{th:ilse}
Let $\gamma >22, \gamma \in \NN$.
Then there exists $N_1=N_1(\gamma, \mu,l,\delta,g,a)\in (0,\infty)$,
such that for all $N \geqslant N_1$  the interval
\[
 I_N:=\left[\tilde C \, N^{-1/4},
            \delta \, l^{-11/2} N^{-\frac{11}{2\gamma}}\right]
\]
is non-empty. For $N \geqslant N_1$  and $\kappa \in I_N$, we have
\begin{equation}
\label{eq:initial}
 \PP\left(\omega \in \Omega\mid \lambda(\kappa\cdot\omega,N)-\lambda^{\per}(\kappa)
\leqslant N^{-\frac{1}{2}}\right )
\leqslant
N^{1-\frac{1}{\gamma}} \ \E^{-C_{\rm LDP}\, N^{1/\gamma}}.
\end{equation}
The constants $\tilde C=\tilde C(\mu,l,g,a)$ and  $C_{\rm LDP}=C_{\rm LDP}(\mu)>0$, as well as $N_1$
are given explicitly in Section \ref{s:probabilistic-proof}
\end{theorem}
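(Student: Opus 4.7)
The plan is to reduce the probabilistic statement to a large-deviations estimate for the i.i.d.\ sequence $(\tilde\omega_j)_{j\in\ZZ}$, $\tilde\omega_j=1-\omega_j$, using the deterministic bound \eqref{eq:lower-bound-omega-formulation} as the bridge. Since $L=Nl$, that bound implies the event inclusion
\[
\{\lambda(\kappa\cdot\omega,N)-\lambda^{\per}(\kappa)\leq N^{-1/2}\}
\subset
\Bigl\{\sum_{j=0}^{N-1}\tilde\omega_j\leq sN\Bigr\},\qquad
s:=\frac{l}{C_{lb}\,\kappa^2\,N^{1/2}}.
\]
On the interval $I_N$ the lower bound $\kappa\geq\tilde C N^{-1/4}$ gives $\kappa^2 N^{1/2}\geq\tilde C^2$, hence $s\leq l/(C_{lb}\tilde C^{2})$. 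I would therefore fix $\tilde C$ so that $l/(C_{lb}\tilde C^{2})\leq \EE[\tilde\omega_0]/2$ (which is positive since $\mu$ is non-trivial); this choice depends on $l,g,a,\mu$, consistent with the stated dependence of $\tilde C$.

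To obtain the exponent $N^{1/\gamma}$ rather than $N$, I would partition $\{0,\dots,N-1\}$ into $M:=\lceil N/n\rceil$ disjoint blocks $B_1,\dots,B_M$ of size $n:=\lfloor N^{1/\gamma}\rfloor$, with $M\leq 2 N^{1-1/\gamma}$. By pigeonhole, if $\sum_{j=0}^{N-1}\tilde\omega_j\leq sN$, then at least one block satisfies $\sum_{j\in B_k}\tilde\omega_j\leq sn$, so by a union bound
\[
\PP\Bigl(\sum_{j=0}^{N-1}\tilde\omega_j\leq sN\Bigr)
\leq M\cdot\max_{k}\PP\Bigl(\sum_{j\in B_k}\tilde\omega_j\leq sn\Bigr).
\]
Each block sum is a sum of $n$ i.i.d.\ copies of $\tilde\omega_0\in[0,1]$ with mean $\EE[\tilde\omega_0]$, and the threshold $s$ lies in $(0,\EE[\tilde\omega_0]/2]$. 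The standard Chernoff/Cram\'er bound
$\PP(\sum_{j\in B_k}\tilde\omega_j\leq sn)\leq e^{-nI(s)}$ then applies with $I$ the Cram\'er rate function of the law of $\tilde\omega_0$, and monotonicity of $I$ on $(0,\EE[\tilde\omega_0])$ yields
$I(s)\geq I\!\left(\EE[\tilde\omega_0]/2\right)=:C_{\mathrm{LDP}}(\mu)>0$. Combining the two displays and absorbing the harmless factor $2$ into $N_1$ delivers the announced estimate \eqref{eq:initial}.

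It remains to verify that $I_N$ is non-empty for $N\geq N_1$; the required inequality $\tilde C N^{-1/4}\leq \delta l^{-11/2}N^{-11/(2\gamma)}$ is equivalent to $N^{1/4-11/(2\gamma)}\geq \tilde C\, l^{11/2}/\delta$, whose left side tends to $+\infty$ precisely because the hypothesis $\gamma>22$ makes the exponent $\frac14-\frac{11}{2\gamma}$ strictly positive. The main obstacle in carrying out the argument is the \emph{uniform} calibration of the rate: $\tilde C$ must be fixed independently of $N$ (and of $\kappa\in I_N$) in such a way that $s$ stays strictly below the mean for every admissible $\kappa$, so that the Cram\'er rate can be replaced by a single positive constant $C_{\mathrm{LDP}}$ depending only on $\mu$. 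The remaining technicalities, namely handling the non-divisibility of $N$ by $n$ and collecting $N_1$ from the finitely many quantitative requirements above (including non-emptiness of $I_N$, validity of Chernoff's estimate with the stated constant, and $n\geq 1$), are routine and absorbed into constants depending on $\gamma,\mu,l,\delta,g,a$.
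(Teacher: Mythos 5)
Your proposal contains a genuine gap that the paper's decomposition-first strategy is specifically designed to avoid: you apply the deterministic bound \eqref{eq:lower-bound-omega-formulation} directly to the full segment of length $L=Nl$, but that bound requires the smallness condition $\kappa\leqslant \delta L^{-11/2}=\delta\,l^{-11/2}N^{-11/2}$, whereas $I_N$ admits $\kappa$ up to $\delta\,l^{-11/2}N^{-11/(2\gamma)}$. Since $\gamma>22$, the allowed $\kappa$'s are far larger than what the deterministic estimate tolerates on a segment of length $N$, so the event inclusion you write down in your first display is simply not available for most $\kappa\in I_N$. Your subsequent pigeonhole reduction to blocks of size $n\approx N^{1/\gamma}$ is correct as a piece of probability, but it operates on the wrong scalar inequality: you cannot first deduce $\sum_j\tilde\omega_j\leqslant sN$ and then chop into blocks, because the step producing that sum bound is invalid.

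The paper's proof resolves exactly this by performing the decomposition \emph{at the operator level first}: it writes $N=K^\gamma$, splits $D_{\kappa\cdot\omega}(N)$ into $J=N^{1-1/\gamma}$ consecutive segments of length $K=N^{1/\gamma}$, and uses Mezincescu boundary conditions to obtain the operator inequality $\mathcal{H}(\kappa\cdot\omega,N)\geqslant\bigoplus_j\mathcal{H}(\kappa\cdot\omega,K,j)$, hence $\lambda(\kappa\cdot\omega,N)\geqslant\min_j\lambda(\kappa\cdot\omega,K,j)$. Only \emph{then} is the deterministic bound applied, on each short segment of length $Kl$, for which the smallness condition $\kappa\leqslant\delta(Kl)^{-11/2}=\delta\,l^{-11/2}N^{-11/(2\gamma)}$ matches the upper endpoint of $I_N$. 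The union bound over $J$ blocks and the large deviation estimate of Lemma~\ref{l:LDP} on a single block of length $K$ then give the exponent $N^{1/\gamma}$ and prefactor $N^{1-1/\gamma}$. So the block structure is not a cosmetic choice; it must be introduced via spectral bracketing before invoking the deterministic result, not afterwards by pigeonhole on a scalar quantity. The rest of your calibration of $\tilde C$ and the non-emptiness argument for $I_N$ are in line with the paper and would be salvageable once the order of the two steps is reversed.
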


\begin{remark}
The requirement $\kappa\in I_N$ encodes
how  our  weak-disorder regime depends on the length scale $N$.
For purpose of illustration, let us choose $\gamma =33$.
Then $\frac{1}{4}> \frac{11}{2 \cdot \gamma} = \frac{1}{6}$, thus
it is possible to choose
$\kappa = const. \, N^{-\frac{1}{4}}$,
which means that the allowed disorder regime does not shrink too fast for $ N \to \infty$.
\end{remark}

For the purposes of a localization proof, for instance using the multiscale analysis
or the fractional moment method, the following initial length scale estimate or finite volume
criterion is of interest. It can be derived from Theorem \ref{th:ilse}
using a Combes-Thomas estimate \cite{CombesT-73}. This is a  well established tool in the theory
or random (Schr\"odinger) operators. An explicit derivation adapted to the setting
of quantum waveguides can be found, e.~g., in Section 4 in \cite{BorisovV-11}.

The mentioned initial length scale estimate is a bound
on the probability that the Green's function exhibits exponential off-diagonal decay
for energies in a small interval at the bottom of the spectrum,
i.e.{} the overall minimum of the spectrum $\lambda^{\per}(\kappa)$.
For the following statement note that for $N$ large $I_N \subset [0,1]$.

\begin{corollary}\label{c:Combes-Thomas}
Let $l,a,g$, $\mu$, $I_N$, $\gamma$, $N_1$ and $C_{\rm LDP}$ be as in Theorem \ref{th:ilse}.
Let $\kappa \in I_N \cap [0,1]$, $\alpha, \beta \geqslant 2$ and set
\begin{align*}
A  &:=\{x \in D_{\kappa\cdot\omega}(N) \mid 0\leqslant  x_1\leqslant  \alpha \} ,
\\
B  &:=\{x \in D_{\kappa\cdot\omega}(N) \mid L-\beta\leqslant  x_1\leqslant  L  \} .
\end{align*}
Then there exists an absolute constant $c$ such that for any $N \geqslant N_1$
\begin{multline*}
 \PP\left(\omega \in \Omega\mid \forall  \ \lambda\leq \lambda^{\per}(\kappa)+1/(2\sqrt N)
  \colon
\left \|\chi_A (\mathcal{H}(\kappa\cdot\omega,N)-\lambda)^{-1}   \chi_B \right \|
\leqslant
\sqrt N
\ \E^{{-c \, \dist(A,B) /\sqrt N}}
\right)
\\
\geqslant
1-N^{1-\frac{1}{\gamma}} \ \E^{-c_{\rm LDP} \, N^{1/\gamma}}.
\end{multline*}
\end{corollary}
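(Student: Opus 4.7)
The plan is to combine the gap estimate of Theorem~\ref{th:ilse} with a deterministic Combes--Thomas resolvent bound. Introduce the good event
\[
\mathcal{G}_N := \bigl\{\omega \in \Omega \mid \lambda(\kappa\cdot\omega, N) - \lambda^{\per}(\kappa) > N^{-1/2}\bigr\}.
\]
By Theorem~\ref{th:ilse}, $\PP(\mathcal{G}_N) \geq 1 - N^{1-1/\gamma} \E^{-C_{\rm LDP} N^{1/\gamma}}$ for $N \geq N_1$ and $\kappa \in I_N$. On $\mathcal{G}_N$, any admissible energy $\lambda \leq \lambda^{\per}(\kappa) + (2\sqrt{N})^{-1}$ lies strictly below $\spec(\mathcal{H}(\kappa\cdot\omega, N))$, with a gap bounded below by
\[
\dist\bigl(\lambda,\, \spec(\mathcal{H}(\kappa\cdot\omega, N))\bigr) \geq \lambda(\kappa\cdot\omega, N) - \lambda > \frac{1}{2\sqrt{N}}.
\]

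The remaining step is purely deterministic. I would invoke the Combes--Thomas estimate for uniformly elliptic second-order operators on the finite waveguide segment $D_{\kappa\cdot\omega}(N)$: conjugating $\mathcal{H}(\kappa\cdot\omega, N)$ by an exponential weight $\E^{t\phi}$ with $\phi$ Lipschitz and $|\nabla\phi|\leq 1$, controlling the first-order commutator via uniform ellipticity, and optimizing over $t$ yields a bound of the form
\[
\|\chi_A (\mathcal{H}(\kappa\cdot\omega, N) - \lambda)^{-1} \chi_B\|
\leq \frac{C_1}{d}\, \E^{-c_1 \sqrt{d}\, \dist(A,B)},
\]
where $d$ is the distance from $\lambda$ to the spectrum and $C_1, c_1$ are absolute. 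This is essentially the estimate carried out in Section~4 of \cite{BorisovV-11} in an almost identical waveguide geometry, so I would simply quote it. Substituting $d \geq (2\sqrt{N})^{-1}$, using $\sqrt{d} \geq c'\, N^{-1/2}$ (which holds once $N$ exceeds an absolute threshold, at the cost of possibly enlarging $N_1$), and absorbing numerical factors into an absolute $c$ produces exactly the stated bound $\sqrt{N}\,\E^{-c\,\dist(A,B)/\sqrt{N}}$.

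The only delicate point, and in my view the main obstacle, is ensuring that the Combes--Thomas constants $C_1, c_1$ are uniform in the random configuration $\kappa\cdot\omega$. This reduces to verifying uniform ellipticity of the transformed Laplacian in the curvilinear $\xi$-coordinates. For $\kappa$ small enough, the coordinate map $x = r_\rho(\xi_1) + \xi_2 \nu_\rho(\xi_1)$ is a diffeomorphism whose Jacobian and inverse depend smoothly and boundedly on $\rho = \kappa\cdot\omega$ with bounds independent of $\omega \in [0,1]^{\ZZ}$, so the required uniform ellipticity is automatic. Intersecting the deterministic resolvent bound with $\mathcal{G}_N$ then yields the corollary (with $c_{\rm LDP}$ in the statement identified with $C_{\rm LDP}$ from Theorem~\ref{th:ilse}).
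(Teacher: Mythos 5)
Your proof follows exactly the route the paper prescribes: combine the probability bound of Theorem~\ref{th:ilse} on the event that the ground state stays above $\lambda^{\per}(\kappa)+N^{-1/2}$ with the deterministic Combes--Thomas resolvent estimate, whose waveguide-adapted version (uniform in the configuration, because the metric coefficients in the $\xi$-coordinates are bounded uniformly in $\omega$ for $\kappa$ small) the paper itself outsources to Section~4 of \cite{BorisovV-11}. The only small point to watch is that producing the prefactor $\sqrt N$ literally (rather than $C\sqrt N$) from $C_1/d\leqslant 2C_1\sqrt N$ requires absorbing the constant into the exponential, which is harmless when $\dist(A,B)\gtrsim\sqrt N$ but is glossed over both by you and by the paper.
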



\section{Comparison with the randomly wiggled or shifted waveguide}
\label{s:comparison}

In this section we compare the results obtained in the present paper
to those of \cite{BorisovV-11}, where a related problem was studied.
There we derived deterministic and probabilistic lower bounds
on the lowest eigenvalue of finite segments of a randomly perturbed
waveguide. In contrast to the model studied here, the random geometric
perturbation was introduced by locally shifting or wiggling the waveguide.
To explain the difference between the two models we recall the
mathematical definition of the Hamiltonians studied in \cite{BorisovV-11}.


\subsection{Statement of the results of \cite{BorisovV-11} on randomly shifted waveguides}
\label{ss:randomly-shifted}

Similarly as in the model studied in this paper we
consider random quantum waveguides in $\RR^2$,
determined by the following data:
Let $(\omega_k)_{k\in \ZZ}$ be a sequence of independent, identically distributed,
non-trivial random variables with values in $[0,1]$,
$\kappa>0$ a global coupling constant, $l\geqslant 1$ the length of one (periodicity) cell of the waveguide,
and $g \in C_0^2(0,l)$ a single bump function.
The following function $G\colon\RR \times \Omega \to \RR$ determines the shape of the waveguide
\[
 G(x_1,\omega) := \sum_{k\in\ZZ} \omega_k \, g(x_1 -kl ) .
\]
Note that $\kappa G(x_1,\omega) =G(x_1,\kappa \omega)$.
For a global coupling constant $\kappa >0$ , $N\in\NN$ and  $j\in\ZZ$
we define finite segments of a infinite waveguide
\[
D_{\kappa,\omega}(N,j):=\{x \in \RR^2 \mid jl< x_1< (j+N)l , \kappa G(x_1,\omega)<x_2<\kappa G(x_1,\omega)+\pi\}.
\]
The upper and lower part of the boundary of
$D_{\kappa,\omega}(N,j)$, we denote by
\begin{align*}
\Gamma_{\kappa,\omega}(N,j):=&
\{x \in \RR^2 \mid jl< x_1< (j+N)l , x_2=\kappa G(x_1,\omega)\}
\\ \cup &
\{x \in \RR^2 \mid jl< x_1< (j+N)l , x_2=\kappa G(x_1,\omega)+\pi\}.
\end{align*}
while $\partial D_{\kappa,\omega}(N,j)\setminus
\Gamma_{\kappa,\omega}(N,j) $ is denoted by   $\gamma_{\kappa,\omega}(N,j)$.
Denote the negative Laplace operator on
$D_{\kappa,\omega}(N,j)$ with Dirichlet boundary conditions on $\Gamma_{\kappa,\omega}(N,j)$
and Neumann b.c.~on $\gamma_{\kappa,\omega}(N,j)$ by
$\mathcal{H}_{\kappa,\omega}(N,j)$, and its lowest eigenvalue
by $\lambda_{\kappa,\omega}(N,j)$.
We  use the following abbreviation:
\begin{align*}
D_{\kappa,\omega}(N)&:=D_{\kappa,\omega}(N,0),&
\Gamma_{\kappa,\omega}(N)&:=\Gamma_{\kappa,\omega}(N,0),&
\gamma_{\kappa,\omega}(N):=\gamma_{\kappa,\omega}(N,0),
\\
\mathcal{H}_{\kappa,\omega}(N)&:=\mathcal{H}_{\kappa,\omega}(N,0),&
\lambda_{\kappa,\omega}(N)&:=\lambda_{\kappa,\omega}(N,0).&
\end{align*}
Note that $D_{\kappa,\omega}(N,j) = D_{1,\kappa\omega}(N,j)$.
Since $\kappa >0$ is arbitrary we may assume without restricting the model
\begin{equation}\label{eq:normalization-shifted}
\max\{\|g\|_{C[0,l]}, \|\|g'\|_{C[0,l]}, \|g''\|_{C[0,l]}\}=1.
\end{equation}
Denote the distribution measure of $\omega_k $ by $\mu$
and by $\PP = \bigotimes\limits_{k\in\ZZ}\mu$ the
product measure  on the configuration space
$\Omega=\times_{k\in\ZZ}[0,1]$ whose elements we
denote by $\omega = (\omega_k)_{k \in\ZZ}$.\\

The minimum of the spectrum of the Laplacian on an straight waveguide segment
$\mathcal{H}_{0,\omega}(N,j)$ is equal to one, and
no operator $\mathcal{H}_{\kappa,\omega}(N,j)$  has spectrum below one.
In this sense, one is the minimal spectral value
for all random configurations.

\begin{theorem}
\label{th:ilse-shifted}
Let $\gamma >34$. Let $g$ and $\mu$ be as above and set
\begin{equation*}
\widetilde{g}:=g-\frac{1}{l}\int_0^l g(t)\di t,\quad
 c_2 =\frac{3\,\|\widetilde{g}\|_{L_2(0,l)}}{2\, l^3},
 \quad c_3 =\frac{3\,\|\widetilde{g}\|_{L_2(0,l)}^2}{5000\, l^7}.
\end{equation*}
Then there exists an initial scale $N_1$ such that if $N \geqslant N_1$  the interval
\[
 I_N:=\left[\frac{2 N^{\frac{1}{\gamma}-\frac{1}{4}}}{\EE\{\omega_k\}\sqrt{c_2}} , c_3  N^{-\frac{15}{2\gamma}} \right]
\]
is non-empty. If $N \geqslant N_1$  and $\kappa \in I_N$, then
\begin{equation}
\label{eq:initial-shifted}
 \PP\left(\omega \in \Omega\mid \lambda_{\kappa,\omega}(N)-1\leqslant N^{-\frac{1}{2}}\right )
\leqslant
N^{1-\frac{1}{\gamma}} \ \E^{-C_{\rm LDP}N^{1/\gamma}}
\end{equation}
for the constant $C_{\rm LDP}>0$ of Lemma \ref{l:LDP},
which depends only on $\mu$.
\end{theorem}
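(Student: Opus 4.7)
The plan is to follow the same two-step architecture used for Theorem \ref{th:ilse}: combine a deterministic lower bound on $\lambda_{\kappa,\omega}(N)-1$ (the shifted-model analogue of Theorem \ref{th:deterministic}, proved in \cite{BorisovV-11}) with a large-deviation bound on a suitable empirical functional of $(\omega_j)$, and then calibrate $\kappa$ so the two estimates mesh.

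\emph{Step 1 (Deterministic bound).} For the randomly shifted waveguide the minimizer of the first eigenvalue is the flat configuration $\omega\equiv 0$, with eigenvalue $1$. Following a Mezincescu-type reduction adapted to the shifted geometry, one imposes Robin conditions on the transverse faces so that the one-cell ground state of the transverse Dirichlet Laplacian remains an eigenfunction of each period cell. Testing the quadratic form of $\mathcal{H}_{\kappa,\omega}(N)$ against a slowly modulated version of this ground state and Taylor-expanding in $\kappa$ yields an estimate of the schematic form
\[
\lambda_{\kappa,\omega}(N) - 1 \;\geq\; c_2\,\kappa^2\, T(\omega) \;-\; \Odr(\kappa^3 L^{p}),
\]
for some exponent $p$, where $T(\omega)\geq 0$ is an empirical functional whose typical value is proportional to $\EE\{\omega_k\}$ and whose zero set singles out the minimizing configuration; the coefficient $c_2$ is produced by the $L_2$-norm of the centered bump $\widetilde g$, which is why only the mean-subtracted part of $g$ enters. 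The restriction $\kappa\leq c_3 L^{-15/2}$ is the one needed to keep the cubic remainder below the quadratic leading term; after the coarse-graining $L\leftrightarrow l\,N^{1/\gamma}$ carried out in Step 2 this translates into the right endpoint $c_3 N^{-15/(2\gamma)}$ of $I_N$.

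\emph{Step 2 (LDP with coarse graining).} Given the deterministic bound, the event of Theorem \ref{th:ilse-shifted} is contained in $\{T(\omega)\leq N^{-1/2}/(c_2\kappa^2)\}$. Because the required decay rate is $\E^{-C_{\rm LDP}N^{1/\gamma}}$ rather than the full Cram\'er rate $\E^{-C N}$, a direct LDP on the whole block of length $N$ is too strong; instead I would partition $\{0,\dots,N-1\}$ into $\sim N^{1-1/\gamma}$ sub-blocks of length $N^{1/\gamma}$, apply Lemma \ref{l:LDP} to the empirical average of $T$ on each sub-block, and combine through a union bound to produce the prefactor $N^{1-1/\gamma}$. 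The left endpoint $2 N^{1/\gamma-1/4}/(\EE\{\omega_k\}\sqrt{c_2})$ is exactly calibrated so that the deterministic threshold $N^{-1/2}/(c_2\kappa^2)$ falls strictly below $\tfrac12\EE\{\omega_k\}$, i.e.\ strictly below the limit of the per-sub-block average of $T(\omega)$ guaranteed by the strong law of large numbers, so that the LDP applies with strictly positive rate $C_{\rm LDP}$. Non-emptiness of $I_N$ then amounts to $\tfrac1\gamma-\tfrac14 \leq -\tfrac{15}{2\gamma}$, i.e.\ $\gamma>34$, matching the hypothesis.

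\emph{Main obstacle.} The substantive step is Step 1. Because $\omega\equiv 0$ is a degenerate minimum of $\omega\mapsto \lambda_{\kappa,\omega}(N)$, the first-order contribution in $\kappa$ vanishes and one must execute a sharp second-order perturbation expansion, controlling the cubic remainder finely enough to permit the exponent $15/2$ in the admissible range $\kappa\leq c_3 L^{-15/2}$; obtaining this exponent with $L$ allowed to grow polynomially in $N$ is what forces the rather careful bookkeeping of derivatives of the ground state and of the curved coordinate map. By contrast Step 2 is essentially a repackaging of Lemma \ref{l:LDP}; the only delicate tuning there is the choice of intermediate scale $N^{1/\gamma}$, which must be large enough that Cram\'er concentration is effective yet small enough that the union-bound prefactor $N^{1-1/\gamma}$ does not swamp the exponential factor.
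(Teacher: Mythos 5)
Note first that the paper does not prove this theorem; it is quoted from \cite{BorisovV-11} purely for comparison with the curved model, and the in-house analogue is Theorem~\ref{th:ilse}, proved in Section~\ref{s:probabilistic-proof}. Judged against that proof, your Step~2 contains a genuine gap, not merely a missing detail. You claim that the deterministic bound immediately places the bad event inside $\{T(\omega)\leq N^{-1/2}/(c_2\kappa^2)\}$ and then propose sub-blocking only to ``soften'' an overly strong LDP rate; this gets the logic backwards. The deterministic bound of Theorem~\ref{th:deterministic-shifted} is valid only under the smallness hypothesis~(\ref{1.0c}), which on a segment of length $L$ forces $\kappa\lesssim L^{-15/2}$. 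With $L=Nl$ and $\kappa\in I_N$ (hence $\kappa$ at least of order $N^{1/\gamma-1/4}$, far larger than $(Nl)^{-15/2}$) this condition is grossly violated, so the deterministic bound cannot be invoked on the full-length segment at all. What actually carries the argument is the operator-level decomposition
$\mathcal{H}_{\kappa,\omega}(N)\geqslant\bigoplus_{j=0}^{J-1}\mathcal{H}_{\kappa,\omega}(K,j)$
with $K=N^{1/\gamma}$ and $J=N^{1-1/\gamma}$ --- the Neumann-bracketing analogue of~(\ref{e:Mezincescu-bracketing}) --- which reduces $\lambda_{\kappa,\omega}(N)$ to $\min_j\lambda_{\kappa,\omega}(K,j)$. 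Only after this reduction does one apply the deterministic bound to each short piece of length $Kl$, whose smallness constraint $\kappa\lesssim(Kl)^{-15/2}$ is what is actually compatible with $\kappa\in I_N$ and what forces $\gamma>34$. The prefactor $N^{1-1/\gamma}$ in~(\ref{eq:initial-shifted}) is the union bound over these $J$ segments, and Lemma~\ref{l:LDP} is then applied a single time to a block of $K$ i.i.d.~variables. Your sketch omits the bracketing entirely, and without it there is simply no regime of $\kappa$ in which the deterministic estimate can be invoked, so the argument as written does not close.

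Two smaller remarks on Step~1. Testing the quadratic form against a modulated trial state, as you describe, yields an \emph{upper} bound on the eigenvalue; the lower bound of \cite{BorisovV-11} comes from a resolvent-level perturbation expansion around the straight operator, analogous to Sections~\ref{Sec:determ}--\ref{s:deterministic-proof} here. And for the shifted model the vertical faces carry plain Neumann conditions, not Robin/Mezincescu ones: since the minimizing configuration is the straight waveguide, the unperturbed ground state $\sin(\xi_2)$ already has vanishing normal derivative, so the Mezincescu logarithmic derivative degenerates to $h\equiv0$.
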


The corresponding deterministic lower bound on the first eigenvalue is:
\begin{theorem}
\label{th:deterministic-shifted}
Let $l\geqslant 1$, $g\colon \RR \to \RR$, $\widetilde{g}\in \RR$ as above, and $L:=Nl$.
For
\begin{equation}
\label{1.0c}
\kappa\left(\sum_{j=1}^N \omega_j^2\right)^{\frac{1}{2}}
\leqslant   \frac{3}{5000}  \, \|\widetilde{g}\|_{L_2(0,l)}^2  \, \frac{1}{ L^7}
\end{equation}
we have the bound
\begin{equation}
\label{eq:lower-bound-shifted}
\lambda_{\kappa,\omega}(N)-1\geqslant
\frac{3}{2}\, \|\widetilde{g}\|_{L_2(0,l)}^2 \, \frac{\kappa}{L^3}
\sum_{j=0}^{N-1} \omega_j^2
\end{equation}
\end{theorem}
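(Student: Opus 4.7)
My plan is to straighten the wiggled strip by an explicit shear, decompose test functions along transverse modes, and reduce the problem to a one-dimensional coercivity estimate on the longitudinal profile. Introducing $y_1 := x_1$ and $y_2 := x_2 - \kappa G(x_1, \omega)$ gives a volume-preserving diffeomorphism $D_{\kappa,\omega}(N) \to \Pi_N := (0, L) \times (0, \pi)$; in the new variables the quadratic form of $\mathcal{H}_{\kappa,\omega}(N)$ reads
\[
Q[u] = \int_{\Pi_N} \left[ (\partial_{y_1} u - \kappa G'(y_1) \partial_{y_2} u)^2 + (\partial_{y_2} u)^2 \right] dy,
\]
with Dirichlet on $y_2 \in \{0, \pi\}$ and---since $g$ is supported in $(0, l)$ so $G'(0) = G'(L) = 0$---Neumann condition $\partial_{y_1} u = 0$ at $y_1 \in \{0, L\}$.

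For any admissible $u$ I would use the orthogonal decomposition $u(y_1, y_2) = f(y_1)\phi(y_2) + w(y_1, y_2)$, where $\phi(y_2) := \sqrt{2/\pi}\sin(y_2)$ is the normalised transverse ground state and $w(y_1, \cdot) \perp \phi$ in $L_2(0, \pi)$. Exploiting $\int_0^\pi \phi \phi' \, dy_2 = 0$, the identity $\phi'' = -\phi$, the Dirichlet trace of $w$, and the transverse spectral gap $\int_0^\pi (\partial_{y_2} w)^2 \geq 4 \int_0^\pi w^2$, a direct computation yields
\[
Q[u] - \|u\|_{L_2(\Pi_N)}^2 \geq \int_0^L (f')^2 \, dy_1 + \kappa^2 \int_0^L (G')^2 f^2 \, dy_1 + 3\|w\|^2 + \|\partial_{y_1} w\|^2 + R,
\]
with $R$ collecting the cross term $-2\kappa\int G' \partial_{y_1} u \, \partial_{y_2} u$ and the mixed $f$--$w$ contributions. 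These remainders are handled by integrating by parts in $y_1$ (boundary terms vanishing because $G'(0) = G'(L) = 0$) together with Cauchy--Schwarz carrying a small weight; the weak-disorder window $\kappa (\sum_j \omega_j^2)^{1/2}\leq \frac{3}{5000}\|\widetilde g\|_{L_2(0,l)}^2 L^{-7}$ is calibrated precisely so that the negative pieces in $R$ can be absorbed into the positive $3\|w\|^2 + \|\partial_{y_1} w\|^2$.

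This leaves a one-dimensional coercivity estimate for $f \in H^1(0, L)$: roughly,
\[
\int_0^L (f')^2 + \kappa^2 \int_0^L (G')^2 f^2 \;\gtrsim\; \|\widetilde g\|_{L_2(0,l)}^2 \frac{\kappa}{L^3}\sum_{j=0}^{N-1}\omega_j^2 \cdot \|f\|_{L_2(0,L)}^2.
\]
I would split $f = \bar f + \widetilde f$ with $\bar f := L^{-1}\int_0^L f$ and invoke Poincaré--Wirtinger, $\|\widetilde f\|^2 \leq (L/\pi)^2\|f'\|^2$. The disjoint supports of $g_j := g(\cdot - jl)$ decouple the potential $\int (G')^2 f^2$ into a sum over cells; an integration by parts within each cell transfers the derivative from $g'$ back onto $f$, thereby replacing the coefficient $\|g'\|_{L_2(0,l)}^2$ by the geometrically natural $\|\widetilde g\|_{L_2(0,l)}^2$, where $\widetilde g := g - l^{-1}\int_0^l g$. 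An optimisation balancing the kinetic term $\|f'\|^2$ against the "mean" contribution $\bar f^2 \kappa \sum\omega_j^2$ then produces the claimed $L^{-3}$ scaling.

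The main obstacle is this final coercivity step: the naive product test $u \equiv \phi$ already furnishes the upper bound $\lambda_{\kappa,\omega}(N) - 1 \leq \kappa^2\|g'\|_{L_2(0,l)}^2\sum\omega_j^2/L$, so reaching the weaker $L^{-3}$ lower bound requires both the careful interpolation between the flat-$f$ and oscillating-$f$ regimes and the crucial conversion $\|g'\| \to \|\widetilde g\|$ via the integration by parts in each cell. The narrow weak-coupling window prescribed by the hypothesis is precisely what makes the cross-term absorption in the two-dimensional reduction and this one-dimensional interpolation close simultaneously.
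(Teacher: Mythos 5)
Your high-level strategy (shear to straighten the domain, transverse decomposition $u = f\phi + w$, spectral gap for $w$) is a reasonable starting point, but two concrete steps fail.

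First, the one-dimensional coercivity you propose,
\begin{equation*}
\int_0^L (f')^2 + \kappa^2 \int_0^L (G')^2 f^2 \;\gtrsim\; \|\widetilde g\|_{L_2(0,l)}^2\,\frac{\kappa}{L^3}\sum_{j}\omega_j^2 \cdot \|f\|_{L_2(0,L)}^2,
\end{equation*}
is dimensionally inconsistent in $\kappa$. Take $f\equiv 1$: the left side equals $\kappa^2\|g'\|_{L_2(0,l)}^2\sum_j\omega_j^2$, while the right side is $\kappa\|\widetilde g\|^2 L^{-2}\sum_j\omega_j^2$. For the inequality to hold one would need $\kappa \gtrsim L^{-2}$, which is incompatible with the hypothesis $\kappa(\sum\omega_j^2)^{1/2}\lesssim L^{-7}$. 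Your own remark that the product test function gives the upper bound $\lambda_{\kappa,\omega}(N)-1 \leq \kappa^2\|g'\|^2\sum\omega_j^2/L$ already shows that only a $\kappa^2$-lower bound can be true; if the target were really linear in $\kappa$ it would exceed this upper bound for small $\kappa$. (The displayed statement appears to carry the same misprint.)

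Second, and more seriously, the cross-term absorption does not close. After the integration by parts in $y_1$, the mixed $f$--$w$ part of $R$ contains a term $\kappa\int_0^L G'' f\, v$, with $v(y_1):=\int_0^\pi \phi'(y_2) w(y_1,y_2)\,dy_2$. The available Cauchy--Schwarz estimate is
\begin{equation*}
\Big|\kappa\int G'' f\, v\Big| \;\leq\; \frac{\kappa^2}{\eta}\|g''\|_\infty^2\|f\|^2 + \eta\|w\|^2.
\end{equation*}
To swallow $\eta\|w\|^2$ into the spectral-gap term $3\|w\|^2$ one needs $\eta\lesssim 1$, but then the price $\kappa^2\eta^{-1}\|f\|^2$ dominates the target $\kappa^2\|\widetilde g\|^2 L^{-3}\sum\omega_j^2\,\|f\|^2$ unless $\|g'\|^2\sum_j\omega_j^2\gtrsim L^3$, which cannot hold since $\sum_j\omega_j^2\leq N = L/l$. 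Note that the smallness condition on $\kappa$ is of no help here: $\kappa$ appears with the same power on both sides of the required inequality, so the window cannot be ``calibrated'' to make this absorption work. This $f$--$w$ coupling is exactly the second-order perturbation-theoretic interaction; discarding it by naive absorption destroys the delicate cancellation that produces the $L^{-3}$ scaling. The actual argument (in \cite{BorisovV-11}, and in parallel in Sections~\ref{s:deterministic-preliminaries}--\ref{s:deterministic-proof} of the present paper for the curved case) constructs the first-order corrector via the reduced resolvent, carries out the expansion to second order, and exhibits the cancellation of the $\|g''\|_{L_2}$-terms explicitly; a quadratic-form coercivity estimate that treats the coupling as a remainder cannot reproduce this.
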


\subsection{Comparison of the results on randomly shifted and curved waveguides}
\label{ss:randomly-shifted-comparison}

For definitness we will call the model of \cite{BorisovV-11}
\emph{randomly shifted waveguide}
and the model studied in the present paper \emph{randomly curved waveguide}.
\begin{enumerate}[(i)]
 \item
In the first mentioned model the configuration of parameters which
produces the minimal first eigenvalue corresponds to a straight waveguide,
where the perturbation is switched off.
In contrast to this, the configuration of the  randomly curved waveguide
yielding the minimal first eigenvalue corresponds to a periodic waveguide with
maximal possible curvature.
This difference between the two considered problems was somewhat surprising for us,
since, naively, the two models look very similar.
\item
Since for the randomly curved waveguide the bottom of the spectrum
is achieved as the perturbation is maximal,
the analysis of the model is more complicated.
For we have to perform an asymptotic expansion not around the straight waveguide,
but around the periodic waveguide $\cH(\rho \cdot \mathbf{1},N)$  with maximal allowed curvature,
which arises itself as a result of the perturbation.
\item
The smallness condition on the coupling constants
--- formulae (\ref{eq:small-coupling}) and (\ref{1.0c}) ---
and the lower bounds on the first eigenvalue
--- formulae (\ref{eq:deterministic-estimate}) and (\ref{eq:lower-bound-shifted}) ---
in Theorems \ref{th:deterministic} and \ref{th:deterministic-shifted}
exhibit a power-like, i.e. polynomial behaviour.
The exponents do not coincide, but are of the same order of magnitude.
\item
In both models  we employ a change of the variables $x\to\xi$ straightening the waveguide.
After this change the operator depends on the variables $\rho_i$.
For the randomly curved waveguide the dependence on the  $\rho_i$'s is not polynomial,
but irrational.
For the randomly shifted waveguide considered in \cite{BorisovV-11}
the relevant differential operator contains only
terms which were constant, linear or quadratic functions of the variables $\rho_i$.
\item
In Theorem~\ref{th:deterministic} we do not specify an explicit value of
$\delta$ as in the corresponding result in \cite{BorisovV-11} on the randomly shifted waveguide.
Although our technique allows us to obtain this constant  explicitly,
the required calculations are cumbersome and tedious.
The main reason why the control of constants for the randomly curved waveguide
is harder than for the randomly shifted one is
the aforementioned  irrational dependence of the perturbation on the random variables.
\end{enumerate}


\section{Probabilistic estimates: Proof of Theorem \ref{th:ilse}}
\label{s:probabilistic-proof}

We will use the following \emph{large deviations principle}
in the proof of Theorem \ref{th:ilse}. For a reference see for instance \cite{DemboZ-98}.

\begin{lemma}\label{l:LDP}
Let $\omega_k, k\in\ZZ$ be an i.i.d.~sequence of non-trivial, non-negative,
bounded random variables. Then there exists a constant $C_{\rm LDP}>0$
depending only on $\mu$ such that
\[
 \forall \ n \in \NN \ : \
\PP\left(\omega \mid \frac{1}{n} \sum_{k=1}^n \omega_k \leqslant \frac{\EE\{\omega_k\}}{2} \right ) \leqslant \E^{-C_{\rm LDP} n} .
\]
\end{lemma}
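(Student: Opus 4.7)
The plan is to invoke the classical Chernoff bound (a one-sided version of Cram\'er's theorem) applied to the i.i.d.\ sequence. Set $m := \EE\{\omega_1\}$, which is strictly positive since $\omega_1$ is non-trivial and non-negative. The moment generating function
\[
\Lambda(t) := \log \EE\bigl\{\E^{-t\omega_1}\bigr\}, \qquad t \geqslant 0,
\]
is well-defined and finite for all $t \geqslant 0$ since $0 \leqslant \omega_1 \leqslant M$ for some $M<\infty$ (boundedness), and in fact $\Lambda$ is smooth on $[0,\infty)$ with $\Lambda(0)=0$ and $\Lambda'(0)=-m$.

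First I would apply the exponential Markov inequality. For any $t>0$,
\[
\PP\!\left(\frac{1}{n}\sum_{k=1}^n \omega_k \leqslant \frac{m}{2}\right)
=\PP\!\left(-\sum_{k=1}^n \omega_k \geqslant -\frac{nm}{2}\right)
\leqslant \E^{tnm/2}\,\EE\bigl\{\E^{-t\sum_k \omega_k}\bigr\}
=\exp\!\bigl(n\,\varphi(t)\bigr),
\]
where I set $\varphi(t) := \Lambda(t) + \tfrac{tm}{2}$, using independence of the $\omega_k$ and the identical distribution in the last equality.

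Next I would show that $\varphi(t)<0$ for some $t>0$. Since $\varphi(0)=0$ and
\[
\varphi'(0) = \Lambda'(0)+\frac{m}{2} = -m + \frac{m}{2} = -\frac{m}{2} < 0,
\]
there exists $t_0 = t_0(\mu)>0$ with $\varphi(t_0) < 0$ (for concreteness one may take $t_0$ as any value in a small interval where the Taylor remainder is controlled by boundedness of $\omega_1$). Setting
\[
C_{\rm LDP} := -\varphi(t_0) > 0
\]
yields the desired estimate $\PP(\tfrac{1}{n}\sum_{k=1}^n \omega_k \leqslant m/2) \leqslant \E^{-C_{\rm LDP}\,n}$ for every $n\in\NN$.

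There is essentially no obstacle here: $t_0$ and hence $C_{\rm LDP}$ depend only on the law $\mu$ through the function $\Lambda$. The only point requiring a line of justification is the positivity $m>0$, which is immediate from the assumption that $\omega_1$ is non-trivial and non-negative, together with existence of $\Lambda$ on a neighborhood of $0$, which is guaranteed by boundedness (and would in fact hold under much weaker moment assumptions, but boundedness makes the argument trivial). Alternatively, one could simply cite Cram\'er's theorem (e.g.\ \cite{DemboZ-98}) which directly furnishes the bound with rate $C_{\rm LDP} = I(m/2)>0$, where $I$ is the Legendre transform of $\Lambda$; strict positivity of the rate at $m/2<m=\EE\{\omega_1\}$ follows from non-triviality of $\mu$.
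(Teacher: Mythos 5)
Your Chernoff-bound argument is correct: positivity of $m=\EE\{\omega_1\}$ follows from non-triviality and non-negativity, $\Lambda$ is finite and smooth near $0$ by boundedness, and the sign of $\varphi'(0)=-m/2$ gives a $t_0>0$ with $\varphi(t_0)<0$, hence $C_{\rm LDP}=-\varphi(t_0)>0$ depending only on $\mu$. Note that the paper does not actually prove Lemma~\ref{l:LDP}; it only states it and refers the reader to \cite{DemboZ-98}, so your write-up supplies exactly the standard exponential-Markov/Cram\'er argument that the citation delegates to, and you even point this out yourself in your final remark.
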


For the \emph{proof of Theorem \ref{th:ilse}}
we choose $ K \in 2 \NN$ and $\gamma \in \NN$ and set $N:= K^\gamma$.
Thus $N \in \NN$.
Set $J =N/K = K^{\gamma-1}=N^{1-\frac{1}{\gamma}}$.
Following the same ideas as in \cite{HoldenM-84,KirschSS-98a,BorisovV-11}
we decompose  the waveguide segment $D_{\kappa\cdot\omega}(N)$ into smaller parts
\begin{equation}
 \label{eq:decomposition}
\bigcup\limits^\bullet_{j=0, \dots , J-1} D_{\kappa\cdot\omega}(K,j)
\end{equation}
where $\bigcup\limits^\bullet$ denotes a disjoint union, up to a set of measure zero.
According to this  decomposition we introduce new Mezincescu boundary conditions
on the interfaces joining the shorter segments (\ref{eq:decomposition}).
As in the original paper \cite{Mezincescu-87} we conclude
that in the sense of quadratic forms
\[
\mathcal{H}(\kappa\cdot\omega,N) \geqslant \bigoplus_{j=0}^{J-1} \mathcal{H}(\kappa\cdot\omega,K,j).
\]
If we denote the lowest eigenvalue of
$\cH(\kappa\cdot\omega,N,j)$ by  $\lambda(\kappa\cdot\omega,N,j)$, it follows
\begin{equation}
\label{e:Mezincescu-bracketing}
\lambda(\kappa\cdot\omega,N) \geqslant
\min_{j=0}^{J-1} \lambda(\kappa\cdot\omega,K,j)
\end{equation}
In particular, we have the inclusion
\begin{align*}
\left\{\omega \in \Omega\mid \lambda(\kappa\cdot\omega,N)-\lambda^ {\per}(\kappa)
\leqslant N^{-\frac{1}{2}}\right \}
&\subset
\bigcup_{j=0}^{J-1} \left\{\omega \in \Omega\mid \lambda(\kappa\cdot\omega,K,j) -\lambda^ {\per}(\kappa)
\leqslant K^{-\frac{\gamma}{2}}\right \}
\end{align*}
Since the random variables $\omega_k, k \in \ZZ$ are independent and identically distributed,
we obtain
\begin{align*}
\sum_{j=0}^{J-1}
\PP\left(\omega \mid \lambda(\kappa\cdot\omega,K,j) -\lambda^ {\per}(\kappa)
\leqslant K^{-\frac{\gamma}{2}}\right )
\leqslant
N^{1-\frac{1}{\gamma}} \ \PP\left(\omega \mid \lambda(\kappa\cdot\omega,K) -\lambda^ {\per}(\kappa)
\leqslant K^{-\frac{\gamma}{2}}\right).
\end{align*}
For $\kappa \leq \delta L^{-11/2}$  we have
$\lambda(\kappa\cdot\omega, K)-\lambda^{\per}({\kappa})
\geqslant
C_{lb}\frac{ {\kappa}^2}{l \,K} \sum_{j=0}^{K-1}\tilde\omega_j $
and thus the inclusion
\begin{align*}
 \left\{
\omega \mid \lambda(\kappa\cdot\omega,K) -\lambda^ {\per}(\kappa)\leqslant K^{-\frac{\gamma}{2}}\right\}
& \subset
\left\{ \omega \mid C_{lb}\frac{ {\kappa}^2}{l\cdot K} \sum_{j=0}^{K-1}\tilde\omega_j
\leqslant K^{-\frac{\gamma}{2}}\right\}
\\ =
\left\{ \omega \mid \frac{1}{K} \sum_{j=0}^{K-1}\tilde\omega_j
\leqslant
\frac{l}{{C_{lb}\kappa}^2}  K^{-\frac{\gamma}{2}}\right\}
\end{align*}
Denote by $\EE\{\tilde\omega_0\}=\EE\{1-\omega_0\}$ the expectation value of (any) $\tilde\omega_k$.
Choose now $\kappa$ such that
\begin{equation}\label{e:kappa-conditions}
\frac{l}{{C_{lb}\kappa}^2}  \, K^{-\frac{\gamma}{2}} \leqslant \frac{\EE\{\tilde\omega_0\}}{2}
\quad \text{ i.e. } \quad
 \tilde C  K^{-\frac{\gamma}{4}}\leqslant \kappa,
\text{ where }
\tilde C  :=\sqrt{\frac{2 \, l}{C_{lb} \EE\{\tilde\omega_k\}}}  .
\end{equation}
On the other we also need to satisfy the restriction
$\kappa \leq \delta (l \, K)^{-11/2} =\delta l^{-11/2} \, N^{-11/(2\gamma)}$.
The upper and the lower bound for $\kappa$ can be reconciled if
$\gamma > 22$ and
\[
K \geqslant K_1 := \left( \frac{\tilde C \, l^{11/2} }{\delta} \right)^{\frac{4}{\gamma -22}}.
\]
The last inequality is equivalent to
\[
N\geqslant N_1:= \left(\frac{2 \, l^{12}}{C_{lb} \EE\{\tilde\omega_0\}\delta^2 }  \right )^{\frac{2\gamma}{\gamma-22}}
\]
For $\kappa$ satisfying (\ref{e:kappa-conditions}) we
are able to apply the large deviations principle of Lemma \ref{l:LDP} and thus obtain
\begin{align*}
\PP\left\{ \omega \mid \frac{1}{K} \sum_{j=0}^{K-1}\tilde\omega_j
\leqslant
\frac{l}{{C_{lb}\kappa}^2}  K^{-\frac{\gamma}{2}}\right\}
\leqslant
\PP\left\{ \omega \mid \frac{1}{K} \sum_{j=0}^{K-1}\tilde\omega_j
\leqslant \frac{\EE\{\tilde\omega_0\}}{2}\right\}
\leqslant \E^{-C_{\rm LDP}K}
\end{align*}
for $K\geqslant K_1$. Hence, for $N\geqslant N_1$
\[
\PP
\left\{\omega \mid \lambda(\kappa\cdot\omega,N)-\lambda^ {\per}(\kappa)
\leqslant N^{-\frac{1}{2}}\right \}
\leqslant
N^{1-\frac{1}{\gamma}} \ \E^{-C_{\rm LDP} N^{1/\gamma}}.
\]


\section{New parameters corresponding to a linearization around the optimal configuration}
\label{s:deterministic-reformulation}

Here we reformulate our main deterministic result, i.e.~Theorem~\ref{th:deterministic}
in a different parametrization, which reflects the perturbation theoretic
proof.
In what follows we want to study waveguide segments which correspond
to configurations $\rho$ where all coefficients $\rho_i$ are close to ${\kappa}$.
In other words, we perform an asymptotic analysis around a linearisation
of the operator family, not around the point $\rho =0 \cdot \textbf{1}$ in parameter space,
but rather around $\rho =\kappa\cdot \textbf{1}$.
Thus the natural parameters for perturbation theory are
\[
 \eps:=(\epsilon_j)_{j\in \ZZ}, \eps_j:={\kappa}-\rho_j
\]
In this and the following sections we
denote by $\cH_{{\kappa}}(\eps, N)$ the negative Laplacian on
$L_2(D_{\rho}(N))$ with Dirichlet boundary condition on
$\G_{\rho}(N)$ and with Robin boundary condition, cf.~(\ref{eq:h}),
\begin{equation}
\label{eq:Mezincescu-bc}
\begin{aligned}
&\left(\frac{\p}{\p x_1}-h(\cdot,{\kappa})\right)u=0\quad\text{on}
\quad \g_{\rho}(N),
\\
&\text{where } \ h(\cdot,{\kappa})\colon [0,\pi] \to \RR,
\\
&\quad h(x_2,{\kappa}):=\frac{1}{\psi^{\per}(0,x_2,{\kappa})}\frac{\p\psi^{\per}}{\p x_1}(0,x_2,{\kappa}).
\end{aligned}
\end{equation}
is the logarithmic derivative in $x_2$-direction.
Recall that $\psi^{\per}(\cdot, {\kappa})$ still satisfies the eigenvalue equation
of the operator $ \cH_{{\kappa}}(0, N) $ with the new boundary conditions, i.e.
\[
 \cH_{{\kappa}}(0, N) \psi^{\per} = \lambda({\kappa})\psi^{\per}
\]
In the following we denote the lowest eigenvalue of $\cH_{{\kappa}}(\eps, N)$ by  $\lambda(\eps,{\kappa})$
and suppress here the $N$-dependence in this notation. Here is the announced reformulation of Theorem~\ref{th:deterministic}.

\begin{theorem}\label{th4.2}
Suppose $a<\frac{\pi^3}{32}$.
There exists a constant $\delta>0$  independent of $\rho$, $l$, $N$ 
such that for
\begin{equation}\label{2.1}
{\kappa}  \leqslant \d L^{-11/2},
\text{ and } \rho_i  \leqslant  {\kappa} \text{ for all } i \in 0, \dots N-1
\end{equation}
the estimate
\begin{equation}\label{2.2}
\lambda(\eps,{\kappa})-\lambda^{\per}({\kappa})
\geqslant  \frac{\|g'\|_{L_2(0,l)}^2}{4L} \left(\frac{\pi}{2}-\frac{16a}{\pi^2}\right) {\kappa} \sum\limits_{j=0}^{N-1}\eps_j
\end{equation}
holds true.
The value of $\delta$ depends on $g$ and $a$.
\end{theorem}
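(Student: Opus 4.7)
The overall strategy is a perturbation analysis anchored at $\eps=0$, i.e.\ at the periodic waveguide with maximal curvature $\rho=\kappa\cdot\mathbf{1}$, combined with a ground-state substitution using the periodically extended eigenfunction $\psi^{\per}(\cdot,\kappa)$ of $\cH^{\per}(\kappa)$. First I would pass to the straightened coordinates $\xi$ on the fixed rectangle $\Pi_N:=(0,L)\times(0,\pi)$, so that $\cH_{\kappa}(\eps,N)$ becomes a divergence-form elliptic operator whose coefficient matrix $A(\xi,\eps)$ and Jacobian density $J(\xi,\eps)$ depend smoothly but irrationally on $\eps_j=\kappa-\rho_j$, through $P_1^{-1/2}$ and the second fundamental form of $\Ups_\rho$. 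The smallness condition $\kappa\leqslant\delta L^{-11/2}$ guarantees that these coefficients are uniformly close, in $N$, to those of the periodic reference at $\eps=0$, and that the Mezincescu boundary condition (\ref{eq:Mezincescu-bc}) is well defined on $\g_\rho(N)$.

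Second, since $\psi^{\per}(\cdot,\kappa)$ is a smooth, strictly positive eigenfunction of $\cH_{\kappa}(0,N)$ with eigenvalue $\lambda^{\per}(\kappa)$, and the Mezincescu condition is precisely the one that makes the boundary terms in the integration by parts against $\psi^{\per}$ vanish, I would substitute $u=\psi^{\per}\phi$ in the Rayleigh quotient. For any admissible $u$ this yields
\begin{equation*}
\la(\cH_{\kappa}(\eps,N)-\lambda^{\per}(\kappa))u,u\ra
=\int_{\Pi_N}(A(\xi,\eps)\nabla\phi,\nabla\phi)(\psi^{\per})^2\di\xi
+\int_{\Pi_N}V(\xi,\eps,\kappa)\phi^2(\psi^{\per})^2\di\xi,
\end{equation*}
where the effective potential $V$ vanishes identically at $\eps=0$, because $\psi^{\per}$ is then the exact eigenfunction of $\cH_{\kappa}(0,N)$. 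Consequently $V$ admits the Taylor expansion
\begin{equation*}
V(\xi,\eps,\kappa)=\sum_{j=0}^{N-1}\eps_j\,v_j(\xi,\kappa)+R(\xi,\eps,\kappa),\qquad |R|\leqslant C\,|\eps|^2,
\end{equation*}
with $v_j$ supported near the $j$-th bump and computable by linearising $A$ and $J$ in $\eps_j$. Combining with the $\kappa$-expansion $\psi^{\per}(\xi,\kappa)=\sqrt{2/\pi}\sin\xi_2+\kappa\psi_1+\Odr(\kappa^2)$, the linear-in-$\eps$ part integrated against $(\psi^{\per})^2$ yields exactly $\kappa\,\|g_j'\|_{L_2(0,l)}^2\cdot\tfrac14(\pi/2-16a/\pi^2)$, i.e.\ the constant $C_{lb}$ in (\ref{2.2}).

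Third, to convert this leading-order identity into a rigorous lower bound I would take the eigenfunction minimiser $u$, set $\phi=u/\psi^{\per}$, and split $\phi=\bar\phi(\xi_2)+\phi^\perp$, where $\bar\phi$ is the $\xi_1$-average weighted by $(\psi^{\per})^2$ and $\phi^\perp$ is its orthogonal complement. On $\phi^\perp$ a weighted Poincar\'e-type inequality with periodic weight provides a coercivity constant of order one, uniform in $L$, so that the kinetic form dominates any cross-term arising from $V$. On the mean part the $\xi_1$-component of the kinetic term vanishes, and the linear-in-$\eps$ piece of $V$ reproduces the claimed right-hand side of (\ref{2.2}) when $\bar\phi$ is the constant $1$; deviations of $\bar\phi$ from a constant are absorbed by a one-dimensional Poincar\'e inequality in $\xi_2$.

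The principal obstacle is the quantitative control of the remainder $R$ and of the cross-terms between $\bar\phi$ and $\phi^\perp$, uniformly in $N$. Because the dependence of $A$ and $J$ on $\eps$ is not polynomial, each Taylor estimate produces factors whose $L^\infty$-norms must be bounded uniformly in $N$, and the sum $\sum_j|\eps_j|$ can be as large as $N\kappa=L\kappa/l$. Tracking these factors through the weighted Poincar\'e inequalities and the Sobolev embedding used to dominate $\phi^\perp$ by the kinetic form leads precisely to the polynomial smallness condition $\kappa\leqslant\delta L^{-11/2}$ in (\ref{2.1}); this is the technically heaviest step, matching the authors' remark that the explicit value of $\delta$ is cumbersome to extract. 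The hypothesis $a<\pi^3/32$ ensures $\pi/2-16a/\pi^2>0$, so $C_{lb}>0$ and the bound is non-trivial. Finally, Theorem~\ref{th:deterministic} follows from Theorem~\ref{th4.2} by the relabelling $\rho_j=\kappa-\eps_j$.
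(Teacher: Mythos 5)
The overall geometry of your argument---straighten the domain, perturb around $\rho=\kappa\cdot\mathbf{1}$, and split the trial function into a $\xi_1$-mean and a fluctuation---is sound and broadly parallel to the paper, but the route is genuinely different: the paper does not use a ground-state transform $u=\psi^{\per}\phi$, and instead works with the exact resolvent identity
\[
\lambda(\rho)-\lambda^{\per}(\kappa)=S_1+(\lambda(\rho)-\lambda^{\per}(\kappa))S_2+S_3
\]
obtained from $(\cH_{\kappa}(0,N)-\l)^{-1}$ as in (\ref{4.21})--(\ref{4.24}), which produces at once an exact, all-order expression for the eigenvalue shift and isolates the explicit leading part $S_4$ in (\ref{4.50}).

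There is, however, a genuine gap in the place where you derive the constant $C_{lb}$. You assert that ``the linear-in-$\eps$ piece of $V$ reproduces the claimed right-hand side of (\ref{2.2}) when $\bar\phi$ is the constant $1$,'' and you treat the fluctuation $\phi^{\perp}$ as a nuisance term that the kinetic form dominates. That is incorrect in two respects. First, setting $\bar\phi\equiv 1$, $\phi^{\perp}\equiv 0$ in the Rayleigh quotient can only produce an \emph{upper} bound on $\lambda(\eps,\kappa)-\lambda^{\per}(\kappa)$; for a lower bound you must account for the true minimiser, whose fluctuation part is nontrivial and \emph{lowers} the energy. Second, and decisively, the constant $\tfrac{\pi}{2}\|g'\|_{L_2(0,l)}^2$ in the paper does not come from the first-order term alone. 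In (\ref{4.81}) the first-order (i.e.\ $\psi_1^{\per}$) contribution gives a factor $2\|g'\|_{L_2(0,l)}^2$, while the second-order contribution (through $u=\mathcal{R}_0(1)\mathcal{M}^{(0)}\psi_0^{\per}$, which is exactly the ``optimal $\phi^{\perp}$'' in your language) subtracts $(2-\tfrac{\pi}{2})\|g'\|_{L_2(0,l)}^2$; the surviving coefficient is $2-(2-\tfrac{\pi}{2})=\tfrac{\pi}{2}$, and obtaining this requires a sign lemma for the boundary-value problems (\ref{4.68})--(\ref{4.70}) showing $(\sum\eps_j g_j, U_m)_{L_2(0,L)}\geqslant 0$. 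The $-\tfrac{16a}{\pi^2}\|g'\|^2$ part likewise does not fall out of a pointwise estimate on $V$: it comes from the explicit Fourier series for $\widetilde\psi_1^{\per}$ in (\ref{4.74})--(\ref{4.79}), the monotonicity of the coefficients $B_m$, and the Poincar\'e-type inequality $\|g'\|_{L_2(0,l)}\geqslant\frac{\pi}{a}\|g\|_{L_2(0,l)}$ that exploits $\supp g\subset[0,a]$. None of this is present in your sketch, and without it the ``linear-in-$\eps$ part integrated against $(\psi^{\per})^2$'' simply does not equal the claimed constant. In addition, the cancellation of all $\|g''\|_{L_2(0,l)}^2$ terms (observed after (\ref{4.50})) between first-order and second-order contributions is essential: if you separate mean and fluctuation without tracking this cancellation exactly, the leading constant will be wrong in both magnitude and dependence on $g$.

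In short, your decomposition into $\bar\phi$ and $\phi^{\perp}$ is a reasonable alternative to the paper's reduced-resolvent identity, but the two pieces interact at leading order; the constant in (\ref{2.2}) is the residue of a precise cancellation between them and cannot be read off from the mean part alone, nor dismissed as a harmless cross-term. The real content of the theorem is Lemma~\ref{lm4.6}, and your proposal leaves its entire computation unaddressed.
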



\section{Deterministic lower bounds: Preliminaries\label{Sec:determ}}
\label{s:deterministic-preliminaries}

In this section we present certain less technical preliminaries which
are necessary for the setup of perturbation theory and the proof of Theorem \ref{th4.2}.
First we introduce the necessary notation.
Given $\rho$, we define the operator
\begin{equation*}
\cT(\rho)\colon L_2(D_{\rho}(N))\to L_2(D_{0}(N)),\quad (\cT(\rho)u)(\xi):=u(x(\xi)).
\end{equation*}
corresponding to the change of the variables $x\mapsto\xi$, where $\xi$ is associated with $\rho$. If $\rho_i={\kappa}$ for all $i$, we will write shortly $\cT({\kappa})$ instead of
$\cT({\kappa},{\kappa},\ldots)$. Let us study in more detail the change $x\mapsto \xi$. It is easy to check that
\begin{gather*}
\nabla_x=\mathrm{A}\nabla_\xi,
\\
\mathrm{A}^{-1}:=
\begin{pmatrix}
\frac{\p x_1}{\p \xi_1} & \frac{\p x_2}{\p \xi_1}
\\
\frac{\p x_1}{\p \xi_2} & \frac{\p x_2}{\p \xi_2}
\end{pmatrix}=
\begin{pmatrix}
1-\xi_2 (G'P_1^{-1/2})' & G'+\xi_2 (P_1^{-1/2})'
\\
-G' P_1^{-1/2} & P_1^{-1/2}
\end{pmatrix}.
\end{gather*}
We set
\[
P_2(\xi,\rho):=1-\xi_2 K(\xi_1,\rho), \quad
K(\xi_1,\rho):=G''(\xi_1,\rho)P_1^{-3/2}(\xi_1,\rho)
\]
and obtain by direct calculation
\begin{gather}   
P_3:=\Det^{-1} \mathrm{A}=P_1^{-1/2}(1-\xi_2 G'' P_1^{-1/2}+G'^2)=P_1^{1/2} P_2,
\end{gather}
where $'$ denotes the derivative w.r.t. $\xi_1$. Hence,
\begin{equation}\label{4.2}
\|u\|_{L_2(D_\rho(N))}^2=\la \cT(\rho) u, P_3\cT(\rho) u\ra_{L_2(D_{0}(N))}.
\end{equation}
Thus the operator
\begin{equation*}
\sqrt{P_3}\cT(\rho)\colon L_2(D_{\rho}(N))\to L_2(D_{0}(N)),\quad (\sqrt{P_3}\cT(\rho)u)(\xi):=\sqrt{P_3}u(x(\xi))
\end{equation*}
is unitary.
We observe that $P_i(\xi, \rho)=1+\Odr({\kappa})$, $i=1,2,3$, for $\rho_j \leq {\kappa}$.
Thus for sufficiently small values of ${\kappa}$, non of these functions vanish.
One can calculate directly that
\begin{align}
& P_3 \mathrm{A}^* \mathrm{A}=
\begin{pmatrix}
P_3^{-1} & 0
\\
0 & P_3
\end{pmatrix}\label{4.5}
\\
&
\begin{aligned}
\|\nabla_x u\|_{L_2(D_\rho(N))}^2=&\la\mathrm{A}\nabla_{\xi} \cT(\rho) u, P_3 \mathrm{A}\nabla_{\xi} \cT(\rho) u\ra_{L_2(D_{0}(N))}
\\
=&\left\|P_3^{-1/2} \frac{\p \cT(\rho) u}{\p\xi_1} \right\|_{L_2(D_0(N))}^2
+\left\|P_3^{1/2} \frac{\p \cT(\rho) u}{\p\xi_2} \right\|_{L_2(D_0(N))}^2,
\end{aligned}
\label{4.3}
\\
&\D_x=\frac{1}{P_3} \left(\frac{\p}{\p\xi_1} \frac{1}{P_3} \frac{\p}{\p\xi_1}+\frac{\p}{\p\xi_2} P_3\frac{\p}{\p\xi_2}\right)=
\D_\xi-\mathcal{Q}_\rho=\cH^{\per}(0)+\mathcal{Q}_{\rho},\label{4.4}
\\
&\mathcal{Q}_\rho:=Q_{11}\frac{\p^2}{\p \xi_1^2}+Q_{1}\frac{\p}{\p\xi_1}+Q_{2}\frac{\p}{\p\xi_2},\label{2.0}
\\
&\mathcal{Q}_{11}(\xi,\rho):=1-\frac{1}{P_3^2}
=
-\frac{2\xi_2 K(\xi_1,\rho)-\xi_2^2 K^2(\xi_1,\rho)}{P_2^2(\xi,\rho)}+\frac{\big(G'(\xi_1,\rho)\big)^2} {P_1(\xi_1,\rho) P_2^2(\xi,\rho)},\nonumber
\\
&\mathcal{Q}_1(\xi,\rho):=-\frac{1}{P_3}\frac{\p}{\p\xi_1}\frac{1}{P_3}
=
-\frac{\xi_2  K'(\xi_1,\rho)}{P_1(\xi,\rho)P_2^3(\xi_1,\rho)}+\frac{ G'(\xi_1,\rho) K(\xi_1,\rho)}{P_1^{1/2}(\xi_1,\rho)P_2^2(\xi_1,\rho)},
\nonumber
\\
&\mathcal{Q}_2(\xi,\rho):=-\frac{1}{P_3}\frac{\p P_3}{\p\xi_2}
=
\frac{K(\xi_1,\rho)}{P_2(\xi,\rho)}.
\nonumber
\end{align}
Under the standing assumption (\ref{2.3}) we can find a constant
$C$ independent of $l$, $a$, $g$, $N$, and ${\kappa}$ such that
\begin{equation}\label{4.9}
\|\mathcal{Q}_{11}\|_{C(\overline{D_0(N)})}+\|\mathcal{Q}_1\|_{C(\overline{D_0(N)})} +\|\mathcal{Q}_2\|_{C(\overline{D_0(N)})}
\leqslant C{\kappa}
\end{equation}
uniformly for all configurations satisfying $ \rho_i  \leqslant  {\kappa} \text{ for all } i \in 0, \dots N-1$.
\medskip

Our final aim is to have a lower bound on the movement of eigenvalues under a perturbation.
However, first we need some rough a-priori   information about the position of eigenvalues.
This amounts of showing that the relevant eigenvalue is not too far from the one of a simple
 reference operator.
So an upper bound on the distance is in question.
For this preliminary considerations it will be sufficient to consider the operator on a waveguide segment of
length $l$, i.e. the case $N=1$. Before we state the next Lemma,
we collect some simplifications
which will be useful here and in subsequent considerations.

\begin{remark}[Simplified expressions for the case $N=1$]
Here we consider the case $N=1$, i.e. the operator  $ \cH^{\per} ({\kappa})$
on the unit cell $D_{{\kappa}}(1)$. Note that in this situation there is only one
coupling constant $\rho_0$ in  the game; for simplicity we will write here $\rho=\rho_0$
and similarly $g(\xi_1)=g_0(\xi_1)$.
Furthermore the following functions take on a simple, explicit form:
\begin{align*}
P_1(\xi_1,\rho)  &=1+\big(G'(\xi_1,\rho)\big)^2= 1+\big(\rho g'(\xi_1)\big)^2
\\
K(\xi_1,\rho)  &=\rho g''(\xi_1) P_1^{-3/2}(\xi_1,\rho)
=\rho g''(\xi_1) \big( 1+(\rho g'(\xi_1))^2\big)^{-3/2}
\\
P_2 (\xi,\rho)  &=1-\xi_2 K(\xi_1,\rho)= 1-\xi_2 \rho g''(\xi_1) P_1^{-3/2}(\xi_1,\rho)
\\
P_3(\xi,\rho)   &=P_1^{1/2} P_2= P_1^{1/2}(\xi_1,\rho) -\xi_2 \rho g''(\xi_1) P_1^{-1}(\xi_1,\rho)
\\
\mathcal{Q}_2(\xi,\rho)&
=\frac{K(\xi_1,\rho)}{P_2(\xi,\rho)}
=\frac{\rho g''(\xi_1) }{P_2(\xi,\rho)P_1^{3/2}(\xi_1,\rho)}
\end{align*}
In the following we will make use of the following error term estimates on the functions above:
\begin{align*}
P_1^{1/2}(\xi_1,\rho)  &=1+\frac{\rho^2}{2}  g'(\xi_1)^2 +\Odr(\rho^4),
\quad
P_1^{-1}(\xi_1,\rho)   =1-\rho^2  g'(\xi_1)^2 +\Odr(\rho^4),
\\
P_3(\xi,\rho)  &= P_1^{1/2}(\xi_1,\rho)   -  \rho\xi_2 g'' (\xi_1) P_1^{-1}(\xi_1,\rho)
\\
& = 1 -\rho\xi_2 g''(\xi_1)+\frac{\rho^2}{2}  g'(\xi_1)^2  +\rho^3\xi_2 g'' (\xi_1)  g'(\xi_1)^2  +\Odr(\rho^4).
\end{align*}
in particular $\mathcal{Q}_\rho=\Odr(\rho)$.
 \end{remark}

\begin{lemma}\label{lm4.1}
There is a constant $c$ independent of $a$, $l$, $g$ such that for $\rho<c$ the eigenpair $\lambda^{\per}(\rho)$,
$\psi^{\per}(\cdot,\rho)$  of the operator $ \cH^{\per} (\rho)$
on the unit cell $D_{\rho}(1) $ satisfies the relations
\begin{equation*}
|\lambda^{\per}(\rho)-1|  \leqslant   C\rho^2,\quad
\|\cT(\rho)\psi^{\per}(\cdot,\rho)-\psi_0^{\per}-
\rho\psi_1^{\per}\|_{C^3(\overline{D_{0}(1)})}  \leqslant C\rho^2,
\end{equation*}
where the constant $C$ is independent of $\rho$, $a$, $l$, and $g$. Here
$\psi_0^{\per}(\xi):=\sqrt{2/(\pi l)}\sin\xi_2$ is the ground state of $\cH^{\per}(0)$,
and $\psi_1^{\per}$ is
orthogonal to $\psi_0^{\per}$ in $L_2(D_{0}(1))$ and solves the equation
\begin{equation}\label{4.34}
(\cH^{\per}(0)-1)\psi_1^{\per}=-g''\frac{\p\psi_0^{\per}}{\p\xi_2}.
\end{equation}
\end{lemma}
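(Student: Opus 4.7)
The plan is to pull back to the straight unit cell $D_0(1)$ and apply regular analytic perturbation theory at $\rho=0$. By (4.4), $\cT(\rho)\cH^{\per}(\rho)\cT(\rho)^{-1}=\cH^{\per}(0)+\mathcal{Q}_{\rho}$ acts on $D_0(1)$ with unchanged Dirichlet/periodic boundary data; the explicit expansions in the Remark yield $\mathcal{Q}_{\rho}=\rho\,\mathcal{Q}^{(1)}+O(\rho^2)$ uniformly, with $\mathcal{Q}^{(1)}=g''(\xi_1)\,\p_{\xi_2}-2\xi_2 g''(\xi_1)\,\p^2_{\xi_1}-\xi_2 g'''(\xi_1)\,\p_{\xi_1}$, and $P_3=1+O(\rho)$. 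The spectrum of $\cH^{\per}(0)=-\Delta_{\xi}$ on $D_0(1)$ is $\{(2\pi k/l)^2+m^2:k\in\ZZ,\,m\in\NN\}$, so $1$ is a simple eigenvalue (attained only at $(k,m)=(0,1)$) with eigenfunction $\psi_0^{\per}$. Rellich's theorem on analytic eigenvalue perturbations then furnishes real-analytic branches $\lambda^{\per}(\rho)=1+\rho\lambda_1+O(\rho^2)$ and $\tilde\psi(\cdot,\rho):=\cT(\rho)\psi^{\per}(\cdot,\rho)=\psi_0^{\per}+\rho\psi_1+O(\rho^2)$ with $\psi_1\perp\psi_0^{\per}$ in $L_2(D_0(1))$.

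Matching the order-$\rho$ terms of the eigenvalue equation gives $(\cH^{\per}(0)-1)\psi_1=\lambda_1\psi_0^{\per}-\mathcal{Q}^{(1)}\psi_0^{\per}$. Since $\psi_0^{\per}=\sqrt{2/(\pi l)}\sin\xi_2$ is independent of $\xi_1$, the two $\xi_1$-derivative pieces of $\mathcal{Q}^{(1)}$ annihilate it and one is left with $\mathcal{Q}^{(1)}\psi_0^{\per}=g''(\xi_1)\,\p_{\xi_2}\psi_0^{\per}$. The Fredholm compatibility condition (pairing with $\psi_0^{\per}$) then reads
\[
\lambda_1=\la g''\p_{\xi_2}\psi_0^{\per},\psi_0^{\per}\ra_{L_2(D_0(1))}=\tfrac{2}{\pi l}\int_0^l g''(\xi_1)\,d\xi_1\int_0^\pi \cos\xi_2\sin\xi_2\,d\xi_2=0,
\]
the $\xi_2$-integral vanishing by antisymmetry. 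Hence $\lambda_1=0$, and $\psi_1\perp\psi_0^{\per}$ identifies $\psi_1$ uniquely as the $\psi_1^{\per}$ characterised by (4.34). Substituting $\tilde\psi=\psi_0^{\per}+\rho\psi_1^{\per}+r$ and $\lambda^{\per}(\rho)=1+\mu$ into the eigenvalue equation and projecting along and perpendicular to $\psi_0^{\per}$ yields $\mu=O(\rho^2)$ together with an inhomogeneous elliptic problem $(\cH^{\per}(0)-1)r=\rho^2 F(\rho)$ on $D_0(1)$ with the ambient boundary conditions and $r\perp\psi_0^{\per}$; standard Schauder regularity for $-\Delta_\xi$ then yields the desired $C^3$-bound on $r$.

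The main obstacle is keeping all constants independent of $l$ (as well as of $a,g$). A brute-force bound on $(\cH^{\per}(0)-1)^{-1}$ restricted to $\{\psi_0^{\per}\}^\perp$ involves the spectral gap $4\pi^2/l^2$ above $1$, which shrinks to zero as $l\to\infty$. The saving structural observation is that the source $g''(\xi_1)\cos\xi_2$ for $\psi_1^{\per}$ has $\xi_2$-profile $\cos\xi_2$, whose expansion in the Dirichlet basis $\{\sin(m\xi_2)\}_{m\geq 1}$ involves only \emph{even} modes $m\geq 2$. Consequently $\psi_1^{\per}$ lies in the closed span of Fourier modes $(k,m)$ with $m\geq 2$, on which the denominators $(2\pi k/l)^2+m^2-1\geq 3$ are bounded below uniformly in $l$ and $k$; combined with the normalisation $\|g\|_{C^4[0,l]}=1$ this gives $\|\psi_1^{\per}\|_{C^3}\leqslant C$ with $C$ independent of $a,l,g$. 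An analogous structural analysis of the source $F(\rho)$ for the remainder $r$, supplemented (where $m=1$ components of $F$ occur) by the compact support of $g$ in $[0,a]$ and the resulting exponential decay of the Green's function of $\cH^{\per}(0)-1$ in $\xi_1$ away from the source, produces the $l$-uniform $C^3$-bound on $r$. This uniform spectral-gap / decay argument is the principal technical point of the proof.
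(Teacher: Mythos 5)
Your approach coincides with the paper's in all essentials: pull the operator back to the straight cell $D_0(1)$ via $\cT(\rho)$, write $\cT(\rho)\cH^{\per}(\rho)\cT(\rho)^{-1}=\cH^{\per}(0)+\cQ_\rho$, invoke analytic (Kato/Rellich) perturbation theory around the simple eigenvalue $1$, use the Fredholm compatibility condition to get $\lambda_1^{\per}=0$ and the equation (\ref{4.34}) for $\psi_1^{\per}$, and then upgrade the $L_2$-expansion to $C^3$ by elliptic regularity. The paper proceeds through Kato's type-(A) criterion and writes an explicit elliptic equation for the second-order coefficient $\psi_2^{\per}$, combining smoothness-improving ($W_2^4\hookrightarrow C^2$) and Schauder estimates, but the structure of the argument is the same as yours.

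Where you add value is in explicitly confronting the $l$-uniformity of the constant $C$. The paper disposes of this by noting only that $\cQ_\rho$ is uniformly bounded in $l$, which does not by itself control the shrinking spectral gap $4\pi^2/l^2$ entering the reduced resolvent. Your observation that the source $g''\partial_{\xi_2}\psi_0^{\per}$ has $\xi_2$-profile proportional to $\cos\xi_2$, which expands in the Dirichlet basis $\{\sin(m\xi_2)\}_{m\geqslant 1}$ only over even $m\geqslant 2$ (this is exactly the expansion the paper uses later in (\ref{4.68})), and that hence $\psi_1^{\per}$ is supported on Fourier modes with $(2\pi k/l)^2+m^2-1\geqslant 3$, is correct and gives a clean, $l$-uniform bound on $\psi_1^{\per}$. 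However, your proposed treatment of the remainder $r$ does not close: you invoke ``exponential decay of the Green's function of $\cH^{\per}(0)-1$ in $\xi_1$'' to handle the $m=1$ components, but in the sector $m=1$, $k\neq 0$ the operator $\cH^{\per}(0)-1$ reduces to $-\partial_{\xi_1}^2$ on mean-zero $l$-periodic functions, whose Green's function is a piecewise quadratic of $L^\infty$ size $O(l)$ and decays not at all; exponential decay is available only for $m\geqslant 2$. Since $\cQ_\rho$ does not commute with $\xi_2\mapsto\pi-\xi_2$, once $\xi_1$-dependence enters through $\psi_1^{\per}$ the $m=1$ sector is genuinely populated at second order, so this is not a vacuous concern. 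Your mode argument is thus complete only to first order; the higher-order $l$-uniformity would need a different mechanism than the one you sketch (the paper's own treatment is also very terse on this point).
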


\begin{remark}\label{rm3.1}
The fact that the function $\psi_0^{\per}$ depends only on the variable $\xi_2$
leads to several simplifications.
On the one hand, $\mathcal{Q}_{\rho} \psi_0^{\per}=
\mathcal{Q}_{2}(\rho) \frac{\partial}{\partial \xi_2} \psi_0^{\per}$.
On the other, integrals involving the functions $g$  and $\psi_0$
can be often simplified by separation of variables, e.g.:
\[
 \la g''\frac{\p\psi_0^{\per}}{\p\xi_2} , \psi_0^{\per} \ra_{L_2(D_{0}(1))}
= \frac{2}{l \pi} \int_0^l d\xi_1 \, g''(\xi_1) \, \int_0^\pi d\xi_2 \, \cos(\xi_2) \sin(\xi_2)
=0.
\]
In the following this will be used repeatedly.
\end{remark}

\begin{proof}
The operator $\cT(\rho)\cH^{\per}(\rho)\cT^{-1}(\rho)$
can be regarded as a perturbation of $\cH^{\per}(0)$, since
$\cT(\rho)\cH^{\per}(\rho)\cT(\rho)^{-1}= \cH^{\per}(0)+\mathcal{Q}_{\rho}$.
Note that $\mathcal{Q}_{\rho}$ is relatively bounded w.r.t. the operator $\cH^{\per}(0)$.
This follows, since the boundary conditions of the functions in the domain of
$\cH^{\per}(0)$ imply that the boundary integral terms resulting from partial
integration vanish. The bound (\ref{4.9}) implies that for sufficiently small $\rho$,
$\mathcal{Q}_{\rho}$ is relatively bounded w.r.t. $\cH^{\per}(0)$
with relative bound strictly smaller than one.
Consequentyl the sum $\cH^{\per}(0)+\mathcal{Q}_{\rho}$ is a closed operator on the domain of
$\cH^{\per}(0)$.
For a $\psi$ in this domain and every $\phi\in L^2(D_{0}(1))$
the function $\rho \mapsto \la \phi, \mathcal{Q}_{\rho} \psi\ra$ is holomorphic for
$\rho$ in a small complex neighbourhood of zero. Thus
$\rho \mapsto \cH^{\per}(0)+\mathcal{Q}_{\rho}$ is a holomorphic family of operators of type (A)
in the terminology of \cite[VII.\S2]{Kato-66}.
Note that one is the lowest eigenvalue of $\cH^{\per}(0)$
and $\psi_0^{\per}$ is the associated normalized eigenfunction.
Thus, e.g., Theorem II.5.11 of  \cite{Kato-66} implies
that there is a constant $C$  such that
\begin{equation}\label{4.35}
\begin{aligned}
&\lambda^{\per}(\rho)=1+\rho\lambda_1^{\per}+\rho^2\lambda_2^{\per}(\rho), && |\lambda_2^{\per}(\rho)|\leqslant C,
\\
&\cT(\rho)\psi^{\per}(\cdot,\rho)=\psi_0^{\per}+\rho\psi_1^{\per}+\rho^2\psi_2^{\per}(\cdot,\rho) && \|\psi_2^{\per}\|_{L_2(D_0(1))}\leqslant C,
\end{aligned}
\end{equation}
where $\lambda_1^{\per} \in \RR$ and $\psi_1^{\per}\colon D_0(1)\to \RR $ are independent of $\rho$.
The constant $C$ is independent of $\rho$,  $a$, $l$, and $g$ since the
perturbation $\mathcal{Q}_{\rho}$ is uniformly bounded in these parameters,
provided the normalization condition (\ref{2.3}) holds.

We substitute (\ref{4.35})
 into the eigenvalue equation
$ \cH^{\per} (\rho)\psi^{\per}(\cdot,\rho)=  \lambda^{\per}(\rho)\psi^{\per}(\cdot,\rho)$
and obtain
\begin{equation}\label{3.7a}
\begin{aligned}
\cT(\rho)\big(\cH^{\per}(0)+\mathcal{Q}_{\rho}\big)
(\psi_0^{\per}+\rho\psi_1^{\per}+\rho^2\psi_2^{\per})
&=\cH^{\per} (\rho)\psi^{\per}(\cdot,\rho)=  \lambda^{\per}(\rho)\psi^{\per}(\cdot,\rho)
\\
&=  \cT(\rho)\left(1+\rho\lambda_1^{\per}+\rho^2\lambda_2^{\per}\right)
\left(\psi_0^{\per}+\rho\psi_1^{\per}+\rho^2\psi_2^{\per}\right).
\end{aligned}
\end{equation}
Thus,
\begin{align*}
& \cH^{\per}(0)\psi_0^{\per}+\rho\cH^{\per}(0)\psi_1^{\per}
+ \mathcal{Q}_{\rho} \psi_0^{\per}+\rho \mathcal{Q}_{\rho}\psi_1^{\per}+\Odr(\rho^2)
=
\psi_0^{\per}+\rho\psi_1^{\per}
+ \rho\lambda_1^{\per} \psi_0^{\per}+\Odr(\rho^2).
\end{align*}
We want to isolate the terms which are linear in the perturbation parameter $\rho$.
For this reason we substract the eigenvalue equation
$\cH^{\per}(0)\psi_0^{\per}=\psi_0^{\per}$
for the unperturbed operator,
\begin{align*}
 \rho\cH^{\per}(0)\psi_1^{\per} + \mathcal{Q}_{\rho} \psi_0^{\per}+\rho \mathcal{Q}_{\rho}\psi_1^{\per}
&=
 \rho\cH^{\per}(0)\psi_1^{\per} + \mathcal{Q}_{2}(\xi,\rho) \frac{\p\psi_0^{\per}}{\p \xi_2}+\Odr(\rho^2)
\\
&=
\rho\psi_1^{\per} + \rho\lambda_1^{\per} \psi_0^{\per}+\Odr(\rho^2),
\end{align*}
divide by $\rho$, and finaly take  the limit $\rho\to 0$
resulting in:
\begin{equation*}
(\cH^{\per}(0)-1)\psi_1^{\per}=-g''\frac{\p\psi_0^{\per}}{\p\xi_2}+\lambda_1^{\per} \psi_0^{\per}.
\end{equation*}
This equation is solvable, if and only if the right hand side is
orthogonal to $\psi_0^{\per}$ in $L_2(D_{0}(1))$, i.e.,
\begin{equation*}
\lambda_1^{\per}=\lambda_1^{\per}\la\psi_0^{\per},\psi_0^{\per} \ra_{L_2(D_{0}(1))}
= \la g''\frac{\p\psi_0^{\per}}{\p\xi_2} , \psi_0^{\per} \ra_{L_2(D_{0}(1))}
=0
\end{equation*}
This implies (\ref{4.34}). The
orthogonality condition for $\psi_1^{\per}$ is implied by the identities
\begin{align*}
1=&\|\psi^{\per}\|_{L_2(D_\rho(1))}^2=\la\cT(\rho)\psi^{\per},
P_1^{1/2}P_2\cT(\rho)\psi^{\per}\ra_{L_2(D_0(1))}
\\
=&\la\psi_0^{\per},P_3 \psi_0^{\per}\ra_{L_2(D_{0}(1))}
+2\rho\la\psi_0^{\per}, P_3 \psi_1^{\per}\ra_{L_2(D_0(1))}+\Odr(\rho^2)
\\
=&\la\psi_0^{\per},(1-\rho\xi_2g_1'')
\psi_0^{\per}\ra_{L_2(D_{0}(1))}
+2\rho\la\psi_0^{\per}, \psi_1^{\per}\ra_{L_2(D_0(1))}+\Odr(\rho^2)
\\
=&1+2\rho\la\psi_0^{\per}, \psi_1^{\per}\ra_{L_2(D_0(1))}+\Odr(\rho^2)
\end{align*}
since $ \int_0^ld\xi_1 g''(\xi_1)=0$.

The functions $\cT(\rho)\psi^{\per}$ and $\psi_1^{\per}$ belong to  ${W_2}^1(D_0(1))$.
By the standard smoothness improving theorems (see, for instance, \cite[Ch. IV, Sec. 2]{Mikhajlov-76}) and by the smoothness of the function $g$ we obtain that $\cT(\rho)\psi^{\per}$ and $\psi_1^{\per}$ are also elements of ${W_2}^4(D_0(1))$.
In view of the embedding ${W_2}^4(D_0(1))\subset C^2(\overline{D_0(1)})$ \cite[Ch I\!I\!I, Sec. 6]{Mikhajlov-76} the functions $\cT(\rho)\psi^{\per}$ and $\psi_1^{\per}$ are the classical solutions to the eigenvalue problem for $\psi_1^{\per}$ and to (\ref{4.34}). By applying Schauder estimates (see \cite[Ch. I\!I\!I, Sec. 1-3]{LadyzhenskayaU-73}) we conclude that $\cT(\rho)\psi^{\per}$ and $\psi_1^{\per}$ belong to $C^3(\overline{D_0(1)})$.

It remains to prove that the asymptotics (\ref{4.35}) for
$\cT(\rho)\psi^{\per}$ holds true also in
$C^3(\overline{D_0(1)})$-norm. In order to do it, by (\ref{3.7a}) we
write first the equation for $\psi_2^{\per}$
\begin{align*}
\cH^{\per}(0)\psi_2^{\per}=&(-\cQ_\rho+\lambda^{\per})\psi_2^{\per}
\\
&+\cT^{-1}(\rho) \left[\lambda_2^{\per}(\psi_0^{\per}+\rho\psi_1^{\per}) + \lambda_1^{\per}\psi_1^{\per} +\rho^{-1} \cQ_\rho\psi_1^{\per} +\rho^{-2}\left( \cQ_\rho-\rho g''\frac{\p}{\p \xi_2}\right)\psi_0^{\per}\right].
\end{align*}
As above we again apply smoothness improving theorems from \cite{Mikhajlov-76}, \cite{LadyzhenskayaU-73} and obtain the uniform in $\rho$ estimate
\begin{equation*}
\|\psi_2^{\per}\|_{C^3(\overline{D_0(1)})}\leqslant C,
\end{equation*}
which yields the desired asymptotics for $\psi^{\per}$.
\end{proof}

Since $\psi^{\per}$ is the ground state, it is positive in $D_{\rho}(1)$. Hence, $h$ in  (\ref{eq:Mezincescu-bc}) is well-defined.
Let us check its behaviour near the Dirichlet boundaries.
It follows from the definition of $\cT(\rho)$ that
\begin{equation*}
\big(\cT(\rho)\psi^{\per}\big)(0,\xi_2,\rho)=\psi^{\per}(0,\xi_2,\rho).
\end{equation*}
In view of the smoothness of $\psi^{\per}$ we can apply Taylor formula as $\xi_2\to+0$,
\begin{equation*}
\psi^{\per}(0,\xi_2,\rho)=\xi_2 \frac{\p\cT(\rho)\psi^{\per}}{\p\xi_2}\bigg|_{\xi=0}+\xi_2^2 \frac{\p^2\cT(\rho)\psi^{\per}}{\p\xi_2^2}\bigg|_{\genfrac{}{}{0 pt}{}{\xi_1=0\ }{\xi_2=\widetilde{\xi}_2}},
\end{equation*}
for some  $\widetilde{\xi}_2\in[0,\xi_2]$. Here we have also employed the Dirichlet condition for $\psi^{\per}$ at $\xi_2=0$. We substitute the asymptotics for $\cT(\rho)\psi^{\per}$  from Lemma~\ref{lm4.1} into the last formula and arrive at
\begin{equation*}
\psi^{\per}(x_1,0,\rho)-\sqrt{\frac{2}{\pi l}} x_2=\Odr(x_2\rho+x_2^2)\quad \text{as}\quad x_2\to+0,\quad \rho\to0.
\end{equation*}
In the same way we obtain
\begin{equation*}
\psi^{\per}(x_1,0,\rho)-\sqrt{\frac{2}{\pi l}} x_2=\Odr\big((\pi-x_2)\rho+(\pi-x_2)^2\big)\quad \text{as}\quad x_2\to+0,\quad \rho\to0.
\end{equation*}
%
%
%
Therefore, for small $\rho$ this function satisfies the estimate
\begin{equation}\label{4.36}
\|h\|_{C^2[0,\pi]}  \leqslant   C\rho,
\end{equation}
where the constant $C$ is independent of $l$, $\rho$, and $g$.

\medskip

After this preliminary analysis of the operator on the unit cell, i.e. a waveguide segment of length $l$,
we turn now to waveguide segments composed of $N$ cells.
In what follows we denote all numerical constants by $C$.
In different formulas this symbol  will denote different positive numbers independent of $L$, $N$, $l$, $\rho$, $a$, and
$g$.   Let $\cH_{0}(0, N)$ be the operator $\cH_{{\kappa}}(0, N)$ taken for ${\kappa}=0$.
In fact, $\cH_0(0, N)$ is the Laplacian on $D_0(N)$
with Dirichlet condition on $\G_0(N)$ and with Neumann
condition on $\g_0(N)$. Note that the ground state $\psi_0^{\per}$
of $\cH_0(0, N)$ is a function of $\xi_2$ only. Thus the function $H$ defining the
Mezincescu b.c. vanishes.

The lowest eigenvalue of $\cH_{{\kappa}}(0, N)$ is $\lambda^{\per}({\kappa})$; the corresponding eigenfunction is
the function $\psi^{\per}$. Let $\widehat{\lambda}^{\per}({\kappa})$ be the
second eigenvalue of $\cH_{{\kappa}}(0, N)$. For the purposes of perturbation theory we need
to establish a minimal distance between the two lowest eigenvalues $\lambda^{\per}({\kappa})$ and $\widehat \lambda^{\per}({\kappa})$
For this  purpose we need
\begin{lemma}\label{lm4.2}
Under the assumption (\ref{2.3})
there exists a constant $C$ independent of $l,N, a, g$ such that the eigenvalue $\widehat{\l}^{p}$ obeys the inequality
\begin{equation*}
|\widehat{\l}^{p}({\kappa})-1-4\pi^2 L^{-2}|  \leqslant   C{\kappa}.
\end{equation*}
\end{lemma}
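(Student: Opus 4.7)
The plan is to treat $\cH_{\kappa}(0,N)$ as an $\Odr(\kappa)$-perturbation of an explicitly diagonalisable reference operator on the flat strip $D_0(N)=(0,L)\times(0,\pi)$, and then to apply the min-max principle. I would first conjugate the operator by the unitary $\sqrt{P_3}\cT(\kappa\cdot\mathbf{1})$ to move the problem to $D_0(N)$. By (\ref{4.4})--(\ref{4.9}) the conjugated differential expression is $\cH_0(0,N)+\mathcal{Q}_{\kappa\cdot\mathbf{1}}$, with $\mathcal{Q}_{\kappa\cdot\mathbf{1}}$ a second-order operator whose coefficients are uniformly $\Odr(\kappa)$ in the sup-norm. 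Simultaneously, the Mezincescu boundary condition on $\gamma_{\kappa\cdot\mathbf{1}}(N)$ becomes a Robin condition on the vertical ends of $D_0(N)$ with coefficient $h$ that satisfies $\|h\|_{C^2[0,\pi]}\leqslant C\kappa$, by (\ref{4.36}). The reference operator $\cH_0(0,N)$ is the Laplacian on $(0,L)\times(0,\pi)$ with Dirichlet condition on the horizontal sides and Neumann condition on the vertical sides; its spectrum is given explicitly by separation of variables, with the two lowest relevant eigenvalues $1$ and $1+4\pi^2 L^{-2}$, which is the target value.

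Next, I would apply min-max to bound $\widehat{\lambda}^{\per}(\kappa)$ from above and below. For the upper bound, I would insert a two-dimensional trial subspace in $\Dom(\cH_{\kappa}(0,N))$ spanned by $\psi^{\per}$ and the $\cT(\kappa\cdot\mathbf{1})^{-1}$-pullback of the second eigenfunction of $\cH_0(0,N)$. Using Lemma~\ref{lm4.1} to control $\psi^{\per}$ and the change of variables asymptotically, together with the $\Odr(\kappa)$ estimates on $\mathcal{Q}_{\kappa\cdot\mathbf{1}}$ and on $h$, the maximal Rayleigh quotient on this subspace is at most $1+4\pi^2 L^{-2}+C\kappa$. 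For the lower bound I would use the orthogonality characterization
\[
\widehat{\lambda}^{\per}(\kappa)=\inf\left\{\tfrac{\la \cH_{\kappa}(0,N)u,u\ra}{\|u\|^2}\ \Big|\ u\perp \psi^{\per}\right\}.
\]
After pulling back to $D_0(N)$, orthogonality to $\psi^{\per}$ forces (by Lemma~\ref{lm4.1}) the projection of $\cT(\kappa\cdot\mathbf{1})u$ onto the unperturbed ground state $\psi_0^{\per}$ to be at most $\Odr(\kappa)\|u\|$; the spectral gap of $\cH_0(0,N)$ then gives $\la \cH_0(0,N)v,v\ra\geqslant(1+4\pi^2 L^{-2})\|v\|^2-\Odr(\kappa)\|v\|^2$, while $\mathcal{Q}_{\kappa\cdot\mathbf{1}}$ and the boundary term $h$ each contribute an additional $\Odr(\kappa)\|v\|^2$ via integration by parts using the $C^2$ bound on $h$.

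The main obstacle is that the reference spectral gap $4\pi^2 L^{-2}$ becomes small for large $L$, so the $\Odr(\kappa)$ perturbation must be substantially smaller than this gap for the argument to yield a meaningful $\Odr(\kappa)$ correction rather than washing out the gap entirely. This is comfortably guaranteed in the regime used downstream in Theorem~\ref{th4.2}, where $\kappa\leqslant \delta L^{-11/2}\ll L^{-2}$. A secondary technicality is that orthogonality to $\psi^{\per}$ in $L_2(D_{\kappa\cdot\mathbf{1}}(N))$ does not correspond exactly to orthogonality of the pullback $\cT(\kappa\cdot\mathbf{1})u$ to $\psi_0^{\per}$ in $L_2(D_0(N))$, due to the nontrivial Jacobian $P_3=1+\Odr(\kappa)$; the mismatch is itself of order $\kappa$ and can be absorbed in the final bound. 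Once these perturbative pieces are assembled, the two-sided estimate in the Lemma follows directly.
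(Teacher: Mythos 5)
Your overall strategy (pull back to the flat strip via $\cT(\kappa)$, view the result as $\cH_0(0,N)+\mathcal{Q}_{\kappa}$ plus a Robin boundary term, and use the min-max principle) is the right one and agrees with the paper. But the route you take through the min-max principle --- a two-dimensional trial space for the upper bound, the orthogonal-complement characterization plus Lemma~\ref{lm4.1} for the lower bound --- is noticeably heavier than what the paper does, and it has led you into a trap. The paper's proof never decomposes against $\psi^{\per}$ at all: it simply shows that $\la\mathcal{Q}_{\kappa}v,v\ra + \int h(|v(0,\cdot)|^2 - |v(L,\cdot)|^2) = \Odr(\kappa)\,\|\nabla v\|^2 = \Odr(\kappa)\,\la\cH_0(0,N)v,v\ra$, so that the whole quadratic form satisfies the two-sided \emph{multiplicative} comparison
\[
\bigl(1-C\kappa\bigr)\,\la\cH_0(0,N)v,v\ra \;\leqslant\; \la(\cH_0(0,N)+\mathcal{Q}_{\kappa})v,v\ra \;\leqslant\; \bigl(1+C\kappa\bigr)\,\la\cH_0(0,N)v,v\ra.
\]
Feeding this directly into the min-max characterization of the second eigenvalue gives $\widehat{\lambda}^{\per}(\kappa)=(1+\Odr(\kappa))(1+4\pi^2L^{-2})$, and since $L\geqslant 1$ this is $1+4\pi^2L^{-2}+\Odr(\kappa)$ with a constant independent of $L$.

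The ``main obstacle'' you identify is where your reasoning goes wrong. You write that the $\Odr(\kappa)$ perturbation must be substantially smaller than the reference gap $4\pi^2 L^{-2}$, and you invoke the downstream restriction $\kappa\leqslant\delta L^{-11/2}$ from Theorem~\ref{th4.2} to justify this. But Lemma~\ref{lm4.2} has no such hypothesis, and its constant $C$ is explicitly required to be independent of $N$ and $l$ (hence of $L$). If the proof genuinely needed $\kappa\ll L^{-2}$, the statement as given would be wrong. The multiplicative structure above is precisely what makes the argument work uniformly in $L$: the perturbation is small \emph{relative to the unperturbed form}, not small in absolute terms relative to the gap, so no smallness of $\kappa$ compared to $L^{-2}$ is required. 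Relatedly, your assertion that $\mathcal{Q}_{\kappa}$ and the boundary term each contribute $\Odr(\kappa)\|v\|^2$ is imprecise; they contribute $\Odr(\kappa)\|\nabla v\|^2$, as in (\ref{4.8}), (\ref{4.9}), and the conversion to an additive $\Odr(\kappa)$ on the eigenvalue happens only through the multiplicative comparison with $\la\cH_0(0,N)v,v\ra$ and the bound $1+4\pi^2L^{-2}\leqslant 1+4\pi^2$. Your trial-space/orthogonality approach can likely be repaired to give the same conclusion, but the extra machinery (Lemma~\ref{lm4.1}, the projection mismatch, the two-dimensional matrix eigenvalue) is unnecessary, and as written the proposal leaves a genuine gap by importing an unstated smallness hypothesis that the lemma does not (and must not) carry.
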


\begin{proof}
The quadratic form associated with $\cH_{{\kappa}}(0, N)$
reads as follows
\begin{equation*}
\la\cH_{{\kappa}}(0, N)u,u\ra_{L_2(D_{{\kappa}}(N))}=\|\nabla u \|_{L_2(D_{{\kappa}}(N))}^2 +\int\limits_{x_1=0}
h|u|^2\di x_2
-\int\limits_{x_1=L}
h|u|^2\di x_2.
\end{equation*}
Denoting $v:=\cT({\kappa}) u$, by (\ref{4.3}), (\ref{4.5}) we rewrite the form as follows
\begin{align*}
\la\cH_{{\kappa}}(0, N)u,u\ra_{L_2(D_{{\kappa}}(N))}
=&\left\|P_3^{-1/2} \frac{\p v}{\p\xi_1} \right\|_{L_2(D_0(N))}^2
+\left\|P_3^{1/2} \frac{\p v}{\p\xi_2} \right\|_{L_2(D_0(N))}^2
\\
&+\int_0^\pi
h(\xi_2,{\kappa})(|v(0,\xi_2)|^2-|v(L,\xi_2)|^2)\di \xi_2.
\\
=&
\la\cH_{0}(0, N)v,v\ra_{L_2(D_{0}(N))}
+
\la\cQ_{{\kappa}}v,v\ra_{L_2(D_{0}(N))}
\end{align*}
Since the two lowest eigenvalues of $\cH_{0}(0, N)$ are $1$ and $1+4\pi^2 L^{-2}$,
it is sufficient to estimate the quadratic form of $\cQ_{{\kappa}}$.
It will be convenient to use the abbreviation
$\la\cdot,\cdot\ra_{0}:=\la\cdot,\cdot\ra_{L_2(D_{0}(N))}$ and
$\|\cdot\|_{0}:=\|\cdot\|_{L_2(D_{0}(N))}$, in the sequel.
The estimate (\ref{4.36}) and standard embedding theorems yield
\begin{equation}\label{4.8}
\left|\int\limits_0^{\pi} h(\xi,{\kappa})(|v(0,\xi_2)|^2-|v(L,\xi_2)|^2)\di \xi_2\right|
\leqslant C{\kappa}\|\nabla_\xi v\|_{0}^2.
\end{equation}
Thus we obtain
\begin{align*}
\la\cQ_{{\kappa}}v,v\ra_{L_2(D_{0}(N))}
=&
\left\la \frac{\p  v}{\p\xi_1}, \left(1-P_3^{-1}\right) \frac{\p  v}{\p\xi_1}\right\ra_0
+
\left\la \frac{\p  v}{\p\xi_2}, \left(1-P_3\right) \frac{\p  v}{\p\xi_2}\right\ra_0
\\
&+
\int\limits_0^{\pi} h(\xi,{\kappa})(|v(0,\xi_2)|^2-|v(L,\xi_2)|^2)\di \xi_2
\\
=&
\left\la \frac{\p  v}{\p\xi_1}, \left({{\kappa}} \xi_2 g'' +\Odr({{\kappa}}^2)\right) \frac{\p  v}{\p\xi_1}\right\ra_0
+
\left\la \frac{\p  v}{\p\xi_2}, \left(-{{\kappa}} \xi_2 g'' +\Odr({{\kappa}}^2)\right) \frac{\p  v}{\p\xi_2}\right\ra_0
\\
&+ \Odr \left ({{\kappa}} \|\nabla v\|_0^2\right)
\\
=&
{\kappa}\left\la \frac{\p  v}{\p\xi_1}, \xi_2 g'' \frac{\p  v}{\p\xi_1}\right\ra_0
-
{\kappa}\left\la \frac{\p  v}{\p\xi_2}, \xi_2 g'' \frac{\p  v}{\p\xi_2}\right\ra_0
+ \Odr \left ({{\kappa}} \|\nabla v\|_0^2\right)
\\
=& \Odr \left ({{\kappa}} \|\nabla v\|_0^2\right)
\end{align*}
Thus we have the estimate
\begin{equation*}
\la\cH_{0}(0, N)v,v\ra_{0} + \la\cQ_{{\kappa}}v,v\ra_{0}
= \left(1 +\Odr ({\kappa} ) \right) \la\cH_{0}(0, N)v,v\ra_{0}
\end{equation*}
Now we apply the minimax principle and obtain
\begin{align*}
\widehat{\l}^{p}({\kappa})
&
=\max_{w \in L_2(D_0(N))} \   \min_{v \in W_2^1(D_0(N)), v\perp w, \|v\|=1}
\la(\cH_{0}(0, N)+\cQ_{{\kappa}})v,v\ra_{0}
\\
&= \left(1 +\Odr ({\kappa} ) \right)
\max_{w \in L_2(D_0(N))}  \   \min_{v \in W_2^1(D_0(N)), v\perp w, \|v\|=1}
\la\cH_{0}(0, N)v,v\ra_{0}
\\
&= \left(1 +\Odr ({\kappa} ) \right)
\left(1 +(\frac{2\pi}{L})^2 \right)
= 1 +\left(\frac{2\pi}{L}\right)^2     +\Odr ({\kappa} )
\end{align*}
since $L\geqslant 1$.
\end{proof}

The proven lemma implies that there exists $C>0$ such that the
set $\Xi:=\{\l\in\CC: |\l-1|<C{\kappa}\}$ contains no
eigenvalues of $\cH_{{\kappa}}(0, N)$ except $\lambda^{\per}({\kappa})$.
Now for $\lambda\in \Xi$:
\begin{align*}
|\widehat{\l}^{p}({\kappa})-\lambda|
&\geqslant
| 1+4\pi^2 L^{-2}-1 |  +|1+4\pi^2 L^{-2}- \widehat{\l}^{p}({\kappa})|  +|\lambda-1|
\\
&\geqslant
4\pi^2 L^{-2}-2C {\kappa}
\end{align*}
Thus  the distance from this set $\Xi$ to the remaining part of the spectrum
$\sigma(\cH_{{\kappa}}(0, N))\setminus \{ {\l}^{p}({\kappa})\}$ is estimated from below by $
4\pi^2 L^{-2}-2C {\kappa}$.
Theorem \ref{th4.2} concerns values of ${\kappa}$ which are much smaller than the inverse length $1/L$ of the waveguide segment.
Thus  we can certainly assume ${\kappa} \leqslant \pi^2/(C \,L^2)$ which means that $ 4\pi^2 L^{-2}-2C {\kappa}\geqslant 3\pi^2 L^{-2}$ is positive.
By \cite[Ch. V, Sec. 3.5]{Kato-66} it implies that for $\l\in\Xi$ the identity
\begin{equation}\label{4.21}
(\cH_{{\kappa}}(0, N)-\l)^{-1}=
\frac{\la\cdot,\widetilde{\psi}^{\per}\ra_{{\kappa}}}{\lambda^{\per}({\kappa})-\l}
\widetilde{\psi}^{\per}+\mathcal{R}_{{\kappa}}(\lambda)
\end{equation}
holds true, where $\widetilde{\psi}^{\per}:=N^{-1/2}\psi^{\per}$,
$\la\cdot,\cdot\ra_{{\kappa}}:=\la\cdot,\cdot\ra_{L_2(D_{{\kappa}}(N))}$,
$\mathcal{R}_{{\kappa}}$ is holomorphic w.r.t. to $\l\in\Xi$
as an operator from $L_2(D_\rho(N))$ to ${W_2}^2(D_\rho(N))$.
The factor $N^{-1/2}$ in the definition of $\widetilde{\psi}^{\per}$
provides the normalization
$\|\widetilde{\psi}^{\per}\|_{L_2(D_{0}(N))}=1$.
For $f\in L_2(D_{{\kappa}}(N))$ the function $u=\mathcal{R}_{{\kappa}}(\lambda)f$
solves the equation
\begin{align}
&\big(\cH_{{\kappa}}(0, N)-\lambda^{\per}({\kappa})\big)u=\widehat{f}_{{\kappa}},
\quad \widehat{f}_{{\kappa}}:=f-\widetilde{\psi}^{\per}\la f,
\widetilde{\psi}^{\per}\ra_{L_2(D_{{\kappa}}(N))},\label{4.38}
\\
&\|f_{{\kappa}}\|_{L_2(D_{{\kappa}}(N))}\leqslant \|f\|_{L_2(D_{{\kappa}}(N))}.\label{4.7}
\end{align}
For each $f\in L_2(D_{{\kappa}}(N))$ the function $\mathcal{R}_{{\kappa}}(\l)f$
is orthogonal to $\widetilde{\psi}_0^{\per}$ in $L_2(D_{{\kappa}}(N))$.

The formulas (\ref{4.2}), (\ref{4.21}) imply
\begin{align}
&(\cT({\kappa})\cH_{{\kappa}}(0, N)\cT^{-1}({\kappa})-\l)^{-1}=
\frac{\la\cdot,P_3\cT({\kappa})\widetilde{\psi}^{\per}\ra_0}{\lambda^{\per}({\kappa})-\l}
\cT({\kappa})\widetilde{\psi}^{\per}+\widehat{\mathcal{R}}_{{\kappa}}(\lambda),
\label{4.62}
\\
&\widehat{\mathcal{R}}_{{\kappa}}(\l):=\cT({\kappa}) \mathcal{R}_{{\kappa}}(\lambda)\cT^{-1}({\kappa}).\nonumber
\end{align}
Let $\cH_{0}(0, N)$ be the Laplacian on $D_0(N)$ subject to Dirichlet boundary condition on $\G_0(N)$
and to Neumann one on $\g_0(N)$.

\medskip

For the subsequent estimates in the perturbation theory we will need some a-priori bounds on several `tame' operators, like resolvents.
These are gathered in the next

\begin{lemma}\label{lm4.3}
For all $\l\in\Xi$ and all $f\in L_2(D_{{\kappa}}(N))$ the estimates
\begin{align}
&\|\mathcal{R}_{{\kappa}}(\lambda)f\|_{{W_2}^2(D_{\rho}(N))}  \leqslant
CL^2\|f\|_{L_2(D_{{\kappa}}(N))}\label{4.40},
\\
&\|\widehat{\mathcal{R}}_{{\kappa}}(\lambda)f
-\mathcal{R}_0(1)\cT ({\kappa})f\|_{{W_2}^2(D_{0}(N))}  \leqslant
C{\kappa}L^4\|f\|_{L_2(D_{{\kappa}}(N))}\label{4.41}
\end{align}
hold true, where the constants $C$ are independent of $L$, $l$, $N$, $a$, $g$, and $f$.
\end{lemma}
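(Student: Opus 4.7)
The plan is to prove the two a-priori bounds separately, with (\ref{4.40}) providing input for (\ref{4.41}).

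For (\ref{4.40}), I would proceed as follows. By (\ref{4.38})--(\ref{4.7}) the function $u=\cR_{{\kappa}}(\l)f$ lies in the orthogonal complement of $\widetilde{\psi}^{\per}$ in $L_2(D_{{\kappa}}(N))$ and satisfies $(\cH_{{\kappa}}(0,N)-\l)u=\widehat{f}_{{\kappa}}$ with $\|\widehat{f}_{{\kappa}}\|_{L_2(D_{{\kappa}}(N))}\leqslant\|f\|_{L_2(D_{{\kappa}}(N))}$. Lemmas~\ref{lm4.1} and \ref{lm4.2}, combined with $\l\in\Xi$, yield the spectral gap $\widehat{\l}^{\per}({{\kappa}})-\l\geqslant 4\pi^2L^{-2}-C{{\kappa}}$, which under the smallness assumption (\ref{2.1}) is bounded below by $2\pi^2L^{-2}$. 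The spectral theorem applied to the selfadjoint restriction of $\cH_{{\kappa}}(0,N)-\l$ to the orthogonal complement of $\widetilde{\psi}^{\per}$ therefore gives $\|u\|_{L_2(D_{{\kappa}}(N))}\leqslant CL^2\|f\|_{L_2(D_{{\kappa}}(N))}$. Lifting this estimate to the ${W_2}^2$-norm is a standard elliptic regularity step for the mixed Dirichlet--Robin problem: by (\ref{4.36}) the Robin coefficient $h$ is uniformly $C^2$-small, the principal part is the Laplacian with smooth boundary, and the lower-order coefficients in the straightened picture are controlled by (\ref{4.9}).

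For (\ref{4.41}), I set $w:=\widehat{\cR}_{{\kappa}}(\l)f-\cR_0(1)\cT({{\kappa}})f$. Formulas (\ref{4.4})--(\ref{2.0}) give the conjugation identity
\[
\cT({{\kappa}})\cH_{{\kappa}}(0,N)\cT^{-1}({{\kappa}})=\cH_0(0,N)+\cQ_{{\kappa}}+B_{{\kappa}},
\]
where $\cQ_{{\kappa}}$ is the bulk perturbation satisfying $\|\cQ_{{\kappa}}\|=\Odr({{\kappa}})$ by (\ref{4.9}), and $B_{{\kappa}}$ encodes the transformed Mezincescu datum, which by (\ref{4.36}) is of order ${{\kappa}}$ on the boundary. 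Both summands in $w$ are orthogonal to the normalized ground state of $\cH_0(0,N)$. Using $\l-1=\Odr({{\kappa}})$ (from $\l\in\Xi$) and $\l^{\per}({{\kappa}})-1=\Odr({{\kappa}}^2)$ (from Lemma~\ref{lm4.1}), one derives that $w$ solves
\[
(\cH_0(0,N)-1)w=r_{{\kappa}},
\]
where $r_{{\kappa}}$ collects $\cQ_{{\kappa}}\widehat{\cR}_{{\kappa}}(\l)f$, the boundary contribution of $B_{{\kappa}}\widehat{\cR}_{{\kappa}}(\l)f$, the term $(\l-\l^{\per}({{\kappa}}))\widehat{\cR}_{{\kappa}}(\l)f$, and projection corrections of order ${{\kappa}}\|f\|_{L_2}$. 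By (\ref{4.40}) with ${{\kappa}}>0$, $\|\widehat{\cR}_{{\kappa}}(\l)f\|_{{W_2}^2(D_0(N))}\leqslant CL^2\|f\|_{L_2(D_{{\kappa}}(N))}$, so after absorbing the boundary trace via a standard trace inequality one obtains $\|r_{{\kappa}}\|_{L_2(D_0(N))}\leqslant C{{\kappa}}L^2\|f\|_{L_2(D_{{\kappa}}(N))}$. A second application of (\ref{4.40}), now for ${{\kappa}}=0$ at $\l=1$, combined with elliptic regularity, delivers $\|w\|_{{W_2}^2(D_0(N))}\leqslant CL^2\|r_{{\kappa}}\|_{L_2(D_0(N))}\leqslant C{{\kappa}}L^4\|f\|_{L_2(D_{{\kappa}}(N))}$.

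The main obstacle will be the careful bookkeeping of the boundary term $B_{{\kappa}}$ produced by conjugating the Mezincescu condition by $\cT({{\kappa}})$. Its contribution to $r_{{\kappa}}$ is a boundary functional which must be absorbed into an $L_2$-bound via a trace inequality, forcing the use of full ${W_2}^2$ control of $\widehat{\cR}_{{\kappa}}(\l)f$ rather than a coarser $L_2$ or $W_2^1$ bound. It is this step, together with the second invocation of the gap bound (\ref{4.40}) on the unperturbed side, that produces the additional factor of $L^2$ and is responsible for the $L^4$-scaling in (\ref{4.41}).
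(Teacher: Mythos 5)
Your overall strategy matches the paper's: for (\ref{4.40}) you combine the $L_2$ gap bound $\|u\|\leq CL^2\|f\|$ (exactly as in (\ref{4.6})) with an $N$-uniform elliptic regularity estimate for $\cH_0(0,N)$; for (\ref{4.41}) you write a difference equation at energy $1$ and invoke the gap a second time, correctly accounting for the two factors of $L^2$. But there are two genuine gaps.

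First, the orthogonality claim for $\hat{\cR}_{{\kappa}}(\l)f$ is false as stated. The function $\cR_{{\kappa}}(\l)\cT^{-1}({\kappa})f$ is orthogonal to $\widetilde{\psi}^{\per}$ in $L_2(D_{{\kappa}}(N))$; pulling this back through (\ref{4.2}), $\hat{\cR}_{{\kappa}}(\l)f$ is orthogonal in $L_2(D_0(N))$ to $P_3\,\cT({\kappa})\widetilde{\psi}^{\per}$, not to $\widetilde{\psi}_0^{\per}$. By Lemma~\ref{lm4.1} these vectors differ by $O({\kappa})$, so the projection of $\hat{\cR}_{{\kappa}}(\l)f$ onto $\widetilde{\psi}_0^{\per}$ is small but non-zero. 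Your assertion that "both summands in $w$ are orthogonal to the normalized ground state of $\cH_0(0,N)$" is therefore incorrect; you need to split off this $O({\kappa})$ projection explicitly before you may invoke the reduced resolvent bound for $(\cH_0(0,N)-1)^{-1}$. The paper does precisely this in (\ref{4.16}), decomposing $\widetilde{u}=\widehat{u}+\widetilde{\psi}_0^{\per}(\widetilde{u},\widetilde{\psi}_0^{\per})$ with $|(\widetilde{u},\widetilde{\psi}_0^{\per})|\leqslant C{\kappa}$, and then working with $\widehat{u}-u_0$. Your "projection corrections" clause could in principle have covered this, but as written it contradicts your orthogonality assertion and you never estimate the offending projection.

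Second, the decomposition $\cT({\kappa})\cH_{{\kappa}}(0,N)\cT^{-1}({\kappa})=\cH_0(0,N)+\cQ_{{\kappa}}+B_{{\kappa}}$ is not an identity between operators on a common domain: the conjugated operator acts on functions satisfying the ${\kappa}$-dependent Robin condition (\ref{eq:Mezincescu-bc}), while $\cH_0(0,N)$ has a Neumann domain. Treating $B_{{\kappa}}$ as an additive boundary functional to be absorbed via a trace inequality is only heuristic and leaves the elliptic regularity step (your invocation of (\ref{4.12})-type estimates) unjustified, because the uniform bound $\|v\|_{W_2^2}\leqslant C\|\cH_0(0,N)v\|_{L_2}$ is established in \cite{LadyzhenskayaU-73} for the Neumann problem. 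The paper resolves this cleanly by changing the unknown: it introduces the explicit factor $\phi(\xi,{\kappa})=\chi(\xi_1)\E^{\xi_1 h(\xi_2,{\kappa})}+\dots$ with $\|\phi-1\|_{C^2}\leqslant C{\kappa}$ and sets $u=\cT^{-1}({\kappa})\phi\widetilde{u}$; then $\widetilde{u}$ satisfies a Neumann problem (\ref{4.14}) for $\cH_0(0,N)+\mathcal{L}_{{\kappa}}$ with $\|\mathcal{L}_{{\kappa}}\|=O({\kappa})$, after which (\ref{4.12}) and a Neumann series apply directly. Without some equivalent device your sketch does not close the regularity argument, either for (\ref{4.40}) or for (\ref{4.41}).

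Both gaps are repairable and your exponent bookkeeping ($L^2$ from the gap, an extra $L^2$ from the second resolvent application, one factor of ${\kappa}$ from the perturbation) is correct, so with the $\phi$-substitution and the projection decomposition inserted the argument would align with the paper's.
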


\begin{proof}
Given and $f\in L_2(D_{{\kappa}}(N))$, we introduce $f_{{\kappa}}$ and $u$ by (\ref{4.38}). It follows from the definition of
$\Xi$ and $\mathcal{R}_{{\kappa}}(\l)$ and (\ref{4.7}) that
\begin{equation}\label{4.6}
\|u\|_{L_2(D_{{\kappa}}(N))}\leqslant CL^2\|f\|_{L_2(D_{{\kappa}}(N))}.
\end{equation}
Let $\chi=\chi(t)$ be an infinitely differentiable cut-off
function equalling one for $t<1/4$ and zero for $t>1/2$. We denote
\begin{equation*}
\phi(\xi,{\kappa}):=\chi(\xi_1)\E^{\xi_1 h(\xi_2,{\kappa})}+
\chi(L-\xi_1)\E^{(L-\xi_1)h(\xi_2,{\kappa})}+2-\chi(\xi_1)-
\chi(L-\xi_1).
\end{equation*}
By (\ref{4.36}) we have
\begin{equation}\label{4.13}
\|\phi-1\|_{C^2(\overline{D_0(N)})}  \leqslant   C{\kappa}.
\end{equation}
We construct $u$ as $u=\cT^{-1}({\kappa})\phi \widetilde{u}$.
We substitute this identity and (\ref{4.4}) into the equation (\ref{4.38}) which implies that for $\widetilde{u}$
\begin{equation}\label{4.14}
(\cH_{0}(0, N)+\mathcal{L}_{{\kappa}})\widetilde{u}=
\widetilde{u}+\phi^{-1} \cT({\kappa}) f_{{\kappa}},
\end{equation}
while $\mathcal{L}_{{\kappa}}$ is a second order differential
operator such that
\begin{equation}\label{4.10}
\|\mathcal{L}_{{\kappa}} v\|_{{W_2}^2(D_{0}(N))}  \leqslant   C{\kappa}
\|v\|_{{W_2}^2(D_{0}(N))}.
\end{equation}
It follows from (\ref{4.6}) that
\begin{equation}\label{4.11}
\|\widetilde{u}\|_{L_2(D_0(N))}\leqslant C L^2\|f\|_{L_2(D_0(N))}.
\end{equation}
Reproducing word by word the proof of Lemma~7.1 in \cite[Ch. I\!I\!I, Sec. 7]{LadyzhenskayaU-73} and employing (\ref{4.11}), one can prove easily an estimate
\begin{equation}\label{4.12}
\|v\|_{{W_2}^2(D_0(N))}\leqslant C\|\cH_{0}(0, N) v\|_{L_2(D_0(N))}.
\end{equation}
Combining this estimate with (\ref{4.10}), we can solve the equation for $\widetilde{u}$ as follows,
\begin{equation*}
\widetilde{u}=\cH_{0}(0, N)^{-1}(\I-\mathcal{L}_{{\kappa}} \cH_{0}(0, N)^{-1})^{-1}(
\widetilde{u}+\phi^{-1}\cT({\kappa}) f_{{\kappa}}),
\end{equation*}
where all the operators are well-defined. Applying the estimates (\ref{4.11}), (\ref{4.12}) once again, we arrive at the inequality
\begin{equation*}
\|\widetilde{u}\|_{{W_2}^2(D_0(N))}\leqslant C L^2\|f\|_{L_2(D_0(N))},
\end{equation*}
which implies (\ref{4.40}).

We proceed to the proof of (\ref{4.41}). We rewrite (\ref{4.14}) as
\begin{equation}\label{4.15}
(\cH_{0}(0, N)-1)\widetilde{u}=
\phi^{-1} \cT({\kappa}) f_{{\kappa}}+\mathcal{L}_{{\kappa}}\widetilde{u}.
\end{equation}
In accordance with (\ref{4.38}) the function $u_0:=\mathcal{R}_0(1) \cT({\kappa}) f$ is the solution to the equation
\begin{align}
&(\cH_{0}(0, N)-1)u_0=f_0,\label{4.17}
\\
&f_0:=\cT({\kappa})f - (\cT({\kappa})f, \widetilde{\psi}_0^{\per})_{L_2(D_0(N))}\widetilde{\psi}_0^{\per},
\quad \widetilde{\psi}_0^{\per}:=N^{-1/2}\psi_0.\nonumber
\end{align}
By the definition of $\mathcal{R}_{{\kappa}}(\l)$ and $\mathcal{R}_0(1)$, the function $u=\cT^{-1}({\kappa})\phi\widetilde{u}$
is orthogonal to $\widetilde{\psi}^{\per}$ in $L_2(D_{{\kappa}}(N))$, while $u_0$ is orthogonal to $\widetilde{\psi}_0^{\per}$ in $L_2(D_0(N))$.
Hence, by the definition of $\phi$ and Lemma~\ref{lm4.1},
\begin{equation}\label{4.16}
\begin{gathered}
\widetilde{u}=\widehat{u}+ \widetilde{\psi}_0^{\per}(\widetilde{u}, \widetilde{\psi}_0^{\per})_{L_2(D_0(N))},
\\
(\widehat{u}, \widetilde{\psi}_0^{\per})_{L_2(D_0(N))}=0,\quad |(\widetilde{u}, \widetilde{\psi}_0^{\per})_{L_2(D_0(N))}|\leqslant C{\kappa}.
\end{gathered}
\end{equation}
The equations (\ref{4.15}), (\ref{4.17}) imply
\begin{equation*}
(\cH_{0}(0, N)-1)(\widehat{u}-u_0)=\phi^{-1} \cT({\kappa})f_{{\kappa}}- \mathcal{L}_{{\kappa}}\widetilde{u}-f_0,
\end{equation*}
where $(\widehat{u}-u_0)$ is orthogonal to $\widetilde{\psi}_0^{\per}$. Since the function $\widehat{u}-u_0$ is well-defined, the right hand side of the last equation is orthogonal to $\widetilde{\psi}_0^{\per}$ in $L_2(D_0(N))$. Moreover, the definition of $\phi$, $f_{{\kappa}}$, $f_0$, Lemma~\ref{lm4.1}, and (\ref{4.10}) yields
\begin{equation*}
\|\phi^{-1} \cT({\kappa})f_{{\kappa}}- \mathcal{L}_{{\kappa}}\widetilde{u}-f_0\|_{L_2(D_0(N))}\leqslant C{\kappa} L^2\|f\|_{L_2(D_{{\kappa}}(N))}.
\end{equation*}
Employing the aforementioned facts, by analogy with (\ref{4.6}), (\ref{4.12}) we get
\begin{equation*}
\|\widehat{u}-u_0\|_{{W_2}^2(D_0(N))}\leqslant C{\kappa} L^2\|f\|_{L_2(D_{{\kappa}}(N))}.
\end{equation*}
Since $\cT({\kappa})\mathcal{R}_{{\kappa}}(\l)f=\cT({\kappa}) u=\widetilde{u}\phi$, the last inequality and (\ref{4.16}) imply (\ref{4.41}).
\end{proof}
\medskip

\section{Deterministic lower bounds: Proof of Theorem \ref{th4.2}}
\label{s:deterministic-proof}

After these preparatory lemmata we turn to the heart of the proof of of Theorem~\ref{th4.2}.
It consists in deteremining the sign and estimating
the three terms of the perturbation $\mathcal{Q}_\rho=\cQ_{11}\frac{\p^2}{\p \xi_1^2}+\cQ_{1}\frac{\p}{\p\xi_1}+\cQ_{2}\frac{\p}{\p\xi_2}$.
In fact, since we have to perform a perturbative analysis not around
the Laplacian $\cH_{0}(0, N)$ of the straight waveguide segment
but rather around the one $\cH_{{\kappa}}(0, N)$
with the maximal possible curvature, the effective perturbation will be
$ \cH_{{\kappa}}(\epsilon, N)-\cH_{{\kappa}}(0, N)= \mathcal{Q}_\rho-\mathcal{Q}_{{\kappa}}$.

As before we will work with the change of the variables $x\mapsto \xi$
introduced above to transform the operator $\cH_{{\kappa}}(\eps, N)$,
\begin{align}
&\cT(\rho)\cH_{{\kappa}}(\eps, N)\cT(\rho)^{-1} =  \cT({\kappa})\cH_{{\kappa}}(0, N)\cT({\kappa})^{-1} +\mathcal{M}(\eps,{\kappa})
\label{4.20}
\\
&\mathcal{M}(\eps,{\kappa})=M_{11}\frac{\p^2}{\p\xi_1^2} +M_{1}\frac{\p}{\p\xi_1}+M_{2}\frac{\p}{\p\xi_2},\nonumber
\\
&M_{11}:=\cQ_{11}(\xi,\rho)-\cQ_{11}(\xi,{\kappa}),\quad
M_{i}:=\cQ_{i}(\xi,\rho)-\cQ_{i}(\xi,{\kappa}),\quad i=1,2.
\end{align}
where  the functions $\cQ_{11}$, $\cQ_i$ have been introduced in (\ref{2.0}).
Let us calculate explicitly $\cQ_2$ for values $\xi_1\in [0,l]$. First we expand the function $\rho\mapsto \cQ_2(\xi, \rho)$ around $\rho=0$:
\begin{align}\label{eq:expansionQ}
 \cQ_2(\xi,\rho) = g''(\xi_1)\rho \left(1+\xi_2 g''(\xi_1) \rho + \Odr(\rho^2)\right)
                 = g''(\xi_1)\rho+\xi_2 (g''(\xi_1))^2 \rho^2 + \Odr(\rho^3).
\end{align}
Suppressing for the moment the $\xi$-dependence and using here the prime ${}'$ to denote derivatives w.r.t. the variable $\rho$
we have
\begin{align*}
 \cQ_2'(\rho)&=\cQ_2'(0) +\cQ_2''(0)\rho +\Odr(\rho^2)\\
 \cQ_2''(\rho) &=\cQ_2''(0) +\Odr(\rho).
\end{align*}
Thus for some intermediate value $\tilde \rho\in [\rho,{\kappa}]$ we obtain
\begin{align*}
 \cQ_2(\rho)-\cQ_2({\kappa}) &=\cQ_2'({\kappa})(\rho-{\kappa}) +\frac{1}{2}\cQ_2''({\kappa}) (\rho-{\kappa})^2 +\frac{1}{6}\cQ_2'''(\tilde\rho) (\rho-{\kappa})^3
 \\
 &= -\cQ_2'(0) \epsilon -\cQ_2''(0) \epsilon {\kappa} + \frac{1}{2}\cQ_2''(0) \epsilon^2  + \Odr({\kappa}^2\epsilon)
\intertext{and after evaluating (\ref{eq:expansionQ}) and inserting the value at $\rho=0$,}
 &= -\frac{\partial^2 g(\xi_1)}{\partial \xi_1^2} \epsilon -\xi_2 \left(\frac{\partial^2 g(\xi_1)}{\partial \xi_1^2}\right)^2\epsilon ({\kappa}+{\kappa})
 + \Odr({\kappa}^2\epsilon)
\end{align*}
follows, where we have made the dependence on $\xi$ explicit, again.
Direct calculations of similar type show that
\begin{align}
&M_{11}:=M_{11}^{(0)}+M_{11}^{(1)}+\eps_j{\kappa}^2 M_{11}^{(2)},\quad
M_{i}:=M_{i}^{(0)}+M_{i}^{(1)}+\eps_j{\kappa}^2 M_{i}^{(2)},\label{4.43}
\\
&M_{11}^{(0)}=2\eps_j\xi_2 g''_j,\quad
M_{11}^{(1)}=-\eps_j({\kappa}+\rho_j)\big((g'_j)^2-3\xi_2^2(g''_j)^2\big),\nonumber
\\
&M_{1}^{(0)}=\eps_j\xi_2 g'''_j,\quad
M_{1}^{(1)}=-\eps_j({\kappa}+\rho_j)(g'_j-3\xi_2^2g'''_j)g''_j,\nonumber
\\
&M_{2}^{(0)}=-\eps_j g''_j,\quad
M_{2}^{(1)}=-\eps_j({\kappa}+\rho_j)\ \xi_2(g''_j)^2,\nonumber
\end{align}
for $\xi_1\in[(j-1)l,jl]$, and
\begin{equation}\label{4.22}
|M_{11}^{(2)}|+|M_{1}^{(2)}|+|M_{2}^{(2)}|  \leqslant C,
\end{equation}
where the constant $C$ is independent of $L$, $N$, $l$, $j$, $a$, $\xi$, and $g$.

Consider $\l\in\Theta$. Employing the technique used in
\cite[Sect. 4]{Borisov-06}, \cite{Gadylshin-02} and (\ref{4.21}), (\ref{4.20}),
one can show easily that $\l=\lambda(\rho)\in\Theta$ is an eigenvalue of $\cH_{{\kappa}}(\eps, N)$ if and only if it solves the equation
\begin{equation*}
\l-\lambda^{\per}(\rho)
=\la (\I+\mathcal{M}(\eps,{\kappa})\widehat{\mathcal{R}}_{{\kappa}}(\l))^{-1} \mathcal{M}(\eps,{\kappa})\widehat{\psi}^{\per}, P_3 \widehat{\psi}^{\per}\ra,
\end{equation*}
where $\widehat{\psi}^{\per}:=\cT({\kappa})\widetilde{\psi}^{\per}$,
$\la\cdot,\cdot\ra:=\la\cdot,\cdot\ra_0$, $P_3$ is taken for $\rho=({\kappa},\ldots,{\kappa})$.
 We rewrite this equation as
\begin{equation}
\begin{aligned}
\lambda(\rho)-\lambda^{\per}({\kappa})=&\la  \mathcal{M}(\eps,{\kappa})\widehat{\psi}^{\per}, P_3\widehat{\psi}^{\per}\ra  - \la \mathcal{M}(\eps,{\kappa}) \widehat{\mathcal{R}}_{{\kappa}}(\lambda(\rho)) \mathcal{M}(\eps,{\kappa})\widehat{\psi}^{\per}, P_3\widehat{\psi}^{\per}
\ra
\\
&+ \la  \big(\mathcal{M}(\eps,{\kappa}) \widehat{\mathcal{R}}_{{\kappa}}(\lambda(\rho))\big)^2 (\I+\mathcal{M}(\eps,{\kappa})\widehat{\mathcal{R}}_{{\kappa}}(\l))^{-1}
\mathcal{M}(\eps,{\kappa})\widehat{\psi}^{\per}, P_3\widehat{\psi}^{\per}
\ra.
\end{aligned}\label{4.23}
\end{equation}
The reduced resolvent $\widehat{\mathcal{R}}_{{\kappa}}$ satisfies the resolvent identity
\begin{equation*}
\widehat{\mathcal{R}}_{{\kappa}}(\l)-\widehat{\mathcal{R}}_{{\kappa}}(\mu)=(\l-\mu) \widehat{\mathcal{R}}_{{\kappa}}(\mu) \widehat{\mathcal{R}}_{{\kappa}}(\l).
\end{equation*}
Substituting this identity with $\l=\lambda(\rho)$, $\mu=1$ and (\ref{4.43}) into (\ref{4.23}),  and using the fact that $\widehat{\mathcal{R}}_{{\kappa}}(1)=\mathcal{R}_0(1)$, we obtain
\begin{align}
&\lambda(\rho)-\lambda^{\per}({\kappa})=S_1(\eps,{\kappa})+(\lambda(\rho)-\lambda^{\per}({\kappa})) S_2(\eps,{\kappa}) + S_3(\eps,{\kappa}),
\label{4.24}
\\
& S_1(\eps,{\kappa})=\la \widehat{\mathcal{M}}(\eps,{\kappa})\widehat{\psi}^{\per}, \widehat{P}_3\widehat{\psi}^{\per}  \ra  -\la
\widehat{\mathcal{M}}(\eps,{\kappa})\mathcal{R}_0(1)
\widehat{\mathcal{M}}(\eps,{\kappa})\widehat{\psi}^{\per},\widehat{\psi}^{\per}
\ra,\nonumber
\\
&S_2(\eps,{\kappa})=-\la
 \mathcal{M} (\eps,{\kappa}) \widehat{\mathcal{R}}_{{\kappa}}(\lambda^{\per}({\kappa})) \widehat{\mathcal{R}}_{{\kappa}}(\lambda(\rho))
 \mathcal{M}(\eps,{\kappa}) \widehat{\psi}^{\per},P_3\widehat{\psi}^{\per}
\ra \nonumber
\\
&S_3(\eps,{\kappa})=
- \la \mathcal{M}^{(2)}(\eps,{\kappa})  \big(\I- \widehat{\mathcal{R}}_{{\kappa}}(\lambda^{\per}({\kappa})) \mathcal{M}(\eps,{\kappa}) \big) \widehat{\psi}^{\per},P_3\widehat{\psi}^{\per}
\ra \nonumber
\\
&\hphantom{S_3(\eps,{\kappa})=} -\la \widehat{\mathcal{M}}(\eps,{\kappa}) \widehat{\mathcal{R}}_{{\kappa}}(\lambda^{\per}({\kappa})) \mathcal{M}^{(2)}(\eps,{\kappa})  \widehat{\psi}^{\per},P_3\psi^{\per}
\ra\nonumber
\\
&\hphantom{S_3(\eps,{\kappa})=}
+ \la  \big(\mathcal{M}(\eps,{\kappa}) \widehat{\mathcal{R}}_{{\kappa}}(\lambda(\rho))\big)^2 (\I+\mathcal{M}(\eps,{\kappa})\widehat{\mathcal{R}}_{{\kappa}}(\l))^{-1}
\mathcal{M}(\eps,{\kappa})\widehat{\psi}^{\per},P_3\widehat{\psi}^{\per}
\ra,\nonumber
\\
&\hphantom{S_3(\eps,{\kappa})=} -\la \widehat{\mathcal{M}}(\eps,{\kappa}) \big(\widehat{\mathcal{R}}_{{\kappa}}(\lambda(\rho))-\mathcal{R}_0(1)\big) \widehat{\mathcal{M}}(\eps,{\kappa})\widehat{\psi}^{\per},P_3\widehat{\psi}^{\per}
\ra\nonumber
\\
&\hphantom{S_3(\eps,{\kappa})=}+\la \widehat{\mathcal{M}}(\eps,{\kappa}) \widehat{\psi}^{\per},(P_3-\widehat{P}_3)\psi^{\per}
\ra\nonumber
\\
&\hphantom{S_3(\eps,{\kappa})=}-\la
\widehat{\mathcal{M}}(\eps,{\kappa})\mathcal{R}_0(1)
\widehat{\mathcal{M}}(\eps,{\kappa})\widehat{\psi}^{\per},(P_3-1)\widehat{\psi}^{\per}
\ra,\nonumber
\\
&\widehat{P}_3(\xi):=1-\xi_2 G''(\xi_1,{\kappa},\ldots,{\kappa}),\nonumber
\\
&\widehat{\mathcal{M}}(\eps,{\kappa}):=\mathcal{M}^{(0)}(\eps,{\kappa}) +\mathcal{M}^{(1)}(\eps,{\kappa}), \nonumber
\\
&\mathcal{M}^{(i)}(\eps,{\kappa}):=M_{11}^{(i)}\frac{\p^2}{\p\xi_1^2} +M_{1}^{(i)}\frac{\p}{\p\xi_1}
+M_{2}^{(i)}\frac{\p}{\p\xi_2},\quad i=0,1,2. \nonumber
\end{align}
By (\ref{4.22}), (\ref{2.1}) and Lemmas~\ref{lm4.1},~\ref{lm4.3} we get the estimate for $S_3(\eps,{\kappa})$:
\begin{equation}\label{4.25}
|S_3(\eps,{\kappa})|\leqslant C{\kappa}^2 |\eps|_2 L^{9/2},\quad
\text{ where } \quad |\eps|_2:=\left(\sum\limits_{i=1}^{N}\eps_i^2\right)^{1/2}.
\end{equation}
It follows from (\ref{2.3}), (\ref{2.1}), the definition of $\mathcal{M}(\eps,{\kappa})$ and Lemma~\ref{lm4.3} that
\begin{equation*}
|S_2|\leqslant C{\kappa}^2 L^2\leqslant C c L^{-7}.
\end{equation*}
Hence, for $\delta$ small enough we have $|S_2|<1/2$, and by (\ref{4.24}), (\ref{4.23}) we get
\begin{equation}\label{4.42}
\lambda(\rho)-\lambda^{\per}({\kappa})
\geqslant\frac{S_1(\eps,{\kappa})}{2}-C{\kappa}^2|\eps|_2 L^{9/2}.
\end{equation}
In order to control the contribution $S_1$ we split it into two parts,
using  thereby identity (\ref{4.43}) and Lemmas~\ref{lm4.1}
\begin{align}
S_1(\eps,{\kappa})=&S_4(\eps,{\kappa})+S_5(\eps,{\kappa}),\label{4.44}
\\
S_4(\eps,{\kappa}):=&N^{-1}\la
\widehat{\mathcal{M}}(\eps,{\kappa})\psi_0^{\per},\psi_0^{\per}\ra +
{\kappa}N^{-1} \la \mathcal{M}^{(0)} \psi_1^{\per},\psi_0^{\per} \ra\nonumber
\\
&+
{\kappa}N^{-1} \la \mathcal{M}^{(0)} \psi_0^{\per},\psi_1^{\per} \ra-N^{-1}\la \mathcal{M}^{(0)}(\eps,{\kappa}) \psi_0^{\per}, \xi_2 G'' \psi_0^{\per}\ra \nonumber
\\
&-N^{-1}\la\mathcal{M}^{(0)}(\eps,{\kappa}) \mathcal{R}_0(1) \mathcal{M}^{(0)}(\eps,{\kappa}) \psi_0^{\per},\psi_0^{\per} \ra,\nonumber
\\
S_5(\eps,{\kappa}):=&S_1(\eps,{\kappa})-S_4(\eps,{\kappa}),\nonumber
\end{align}
where $G''$ is taken for $\rho=({\kappa},\ldots,{\kappa})$.
By Lemma~\ref{lm4.3} and (\ref{4.22}) we can estimate $S_5$,
\begin{equation}\label{4.32}
 |S_5(\eps,{\kappa})|  \leqslant   C{\kappa}^2|\eps|_2L^{7/2}.
\end{equation}
We shall show that $S_5$ can be regarded as a small order perturbation of the main term $S_4$. In order to it, we need to calculate $S_4$. It is easy to check that
\begin{equation} \label{4.45}
\begin{aligned}
N^{-1}\la \widehat{\mathcal{M}} (\eps,{\kappa})\psi_0^{\per},\psi_0^{\per}\ra
&=
\frac{\|g''\|_{L_2(0,l)}^2}{2L}\sum\limits_{j=1}^{N}\eps_j({\kappa}+\rho_j),
\\
-N^{-1}\la \mathcal{M}^{(0)}(\eps,{\kappa}) \psi_0^{\per}, \xi_2 G'' \psi_0^{\per}\ra
&=-\frac{\|g''\|_{L_2(0,l)}^2}{2L}{\kappa}\sum\limits_{j=1}^{N}\eps_j.
\end{aligned}
\end{equation}
By (\ref{4.43}) we obtain
\begin{align*}
\la
\mathcal{M}^{(0)}(\eps,{\kappa})&\psi_1^{\per},\psi_0^{\per}\ra+\la
\mathcal{M}^{(0)}(\eps,{\kappa})\psi_0^{\per},\psi_1^{\per}\ra
\\
=&\sum\limits_{j=1}^N
\eps_j \left\la 2\xi_2g''_j\frac{\p^2\psi_1^{\per}}{\p\xi_1^2}+
\xi_2 g'''_j \frac{\p\psi_1^{\per}}{\p\xi_1}
- g''_j \frac{\p\psi_1^{\per}}{\p\xi_2},\psi_0^{\per}
\right\ra_{L_2(D_{0}(j-1,j))}
\\
&-\sum\limits_{j=1}^N
\eps_j \left\la\xi_2 g''_j \frac{\p\psi_0^{\per}}{\p\xi_2},\psi_1^{\per} \right\ra_{L_2(D_0(j-1,j))}
\\
=&\bigg( \left\la 2\xi_2g''\frac{\p^2\psi_1^{\per}}{\p\xi_1^2}+\xi_2 g''' \frac{\p\psi_1^{\per}}{\p\xi_1} - \xi_2g'' \frac{\p\psi_1^{\per}}{\p\xi_2},\psi_0^{\per}
\right\ra_{L_2(D_0(1))}
\\
&- \left\la \psi_1^{\per}, \xi_2 g'' \frac{\p\psi_0^{\per}}{\p\xi_2}  \right\ra_{L_2(D_0(1))}
\bigg) \sum\limits_{j=1}^{N}\eps_j.
\end{align*}
We take into the account the equation (\ref{4.34}) for $\psi_1^{\per}$ and integrate by parts,
\begin{equation}\label{4.46}
\begin{aligned}
&\Big\la 2\xi_2 g''\frac{\p^2 \psi_1^{\per}}{\p\xi_1^2}+\xi_2
g'''\frac{\p \psi_1^{\per}}{\p\xi_1}- \xi_2 g''\frac{\p
\psi_1^{\per}}{\p\xi_2},\psi_0^{\per} \Big\ra_{L_2(D_{0}(1))} -
\Big\la \psi_1^{\per}, \xi_2 g''\frac{\p\psi_0^{\per}}{\p\xi_2} \Big\ra_{L_2(D_{0}(1))}
\\
&
=\Big\la \xi_2 g''\frac{\p^2 \psi_1^{\per}}{\p\xi_1^2}, \psi_0^{\per} \Big\ra_{L_2(D_{0}(1))}-
\Big\la \xi_2 g'',\frac{\p}{\p\xi_2} \psi_0^{\per}\psi_1^{\per} \Big\ra_{L_2(D_{0}(1))}
\\
&
=
-\Big\la \xi_2 g''\Big(
\frac{\p^2 \psi_1^{\per}}{\p\xi_2^2}+\psi_1^{\per}  -g''\frac{\p\psi_0^{\per}}{\p\xi_2}\Big), \psi_0^{\per} \Big\ra_{L_2(D_{0}(1))}
+ \la g'' \psi_1^{\per},\psi_0^{\per}\ra_{L_2(D_{0}(1))}
\\
&=-\frac{\|g''\|_{L_2(0,l)}^2}{2l} - 2\Big\la g''\psi_1^{\per}, \frac{\p\psi_0^{\per}}{\p\xi_2}  \Big\ra_{L_2(D_{0}(1))}
+ \la g'' \psi_1^{\per},\psi_0^{\per}\ra_{L_2(D_{0}(1))}.
\end{aligned}
\end{equation}

Now consider the last contribution to $S_4$.
Denote $u:=\mathcal{R}_0(1)\mathcal{M}^{(0)}(\eps,{\kappa})\psi_0$. This
function solves the equation
\begin{equation}\label{4.48}
(\cH_{0}(0, N)-1)u=-\sum\limits_{j=1}^N\eps_j
g''_j\frac{\p\psi_0^{\per}}{\p\xi_2}
\end{equation}
and is orthogonal to $\psi_0^{\per}$ in $L_2(D_0(L))$.
We use this equation and proceed as in (\ref{4.46}), to calculate
\begin{align*}
-\la
\mathcal{M}^{(0)}(\eps,{\kappa})&\mathcal{R}_0(1)\mathcal{M}^{(0)}
(\eps,{\kappa})\psi_0^{\per},\psi_0^{\per}\ra
\\
&=-\sum\limits_{j=1}^{N}\eps_j
\Big\la 2\xi_2 g''_j\frac{\p^2 u}{\p\xi_1^2}+\xi_2
g'''_j\frac{\p u}{\p\xi_1}- \xi_2 g''_j\frac{\p
u}{\p\xi_2},\psi_0^{\per} \Big\ra_{L_2(D_{0}(j-1,j))}
\\
&=-\sum\limits_{j=1}^{N}\eps_j
\Big\la \xi_2 g''_j\frac{\p^2 u}{\p\xi_1^2} - \xi_2 g''_j \frac{\p
u}{\p\xi_2},\psi_0^{\per} \Big\ra_{L_2(D_{0}(j-1,j))}
\\
&=\sum\limits_{j=1}^{N}\eps_j
\Big\la \xi_2 g''_j \Big(-\eps_j g''_j \frac{\p\psi_0^{\per}}{\p\xi_2} + \frac{\p^2 u}{\p\xi_2^2}+\frac{\p
u}{\p\xi_2} + u\Big)
,\psi_0^{\per} \Big\ra_{L_2(D_{0}(j-1,j))}
\\
&=\frac{\|g''\|_{L_2(0,l)}^2}{2l} \sum\limits_{j=1}^{N}\eps_j^2 +2\sum\limits_{j=1}^{N}\eps_j \Big\la g''_j u, \frac{\p\psi_0^{\per}}{\p\xi_2}\Big\ra_{L_2(D_0(j-1,j))}
\\
&\hphantom{=}
- \sum\limits_{j=1}^{N}\eps_j \Big\la g''_j u, \frac{\p}{\p \xi_2} (\xi_2 \psi_0^{\per}) \Big\ra_{L_2(D_0(j-1,j))}.
\end{align*}
We substitute the last identity and (\ref{4.45}),  (\ref{4.46}) into (\ref{4.44}),
\begin{equation}\label{4.50}
\begin{aligned}
S_4(\eps,{\kappa})=&
2 N^{-1} \sum\limits_{j=1}^{N} \eps_j \Big\la g''_j u,\frac{\p\psi_0^{\per}}{\p\xi_2}\Big\ra_{L_2(D_0(j-1,j))}
\\
&-N^{-1}{\kappa}
\left( 2\Big\la g''\psi_1^{\per},\frac{\p\psi_0^{\per}}{\p\xi_2}
\Big\ra_{L_2(D_0(1))}- \Big\la g'' \psi_1^{\per},\psi_0^{\per}\Big\ra_{L_2(D_0(1))}\right) \sum\limits_{j=1}^{N}\eps_j
\\
&-N^{-1} \sum\limits_{j=1}^{N} \eps_j \Big\la g''_j u, \frac{\p}{\p\xi_2} (\xi_2\psi_0^{\per}) \Big\ra_{L_2(D_0(j-1,j))}.
\end{aligned}
\end{equation}
Note that terms involving $\|g''\|_{L_2(0,l)}$ cancel.
Next we study each term in the obtained expression for $S_4$.

\begin{lemma}\label{lm4.6}
The inequality
\begin{equation}\label{4.30}
S_4(\eps,{\kappa})\geqslant \frac{\|g'\|_{L_2(0,l)}^2}{L} \left(\frac{\pi}{2}-\frac{16a}{\pi^2}\right) {\kappa} \sum\limits_{j=1}^{N}\eps_j
\end{equation}
holds true.
\end{lemma}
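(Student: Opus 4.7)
The proof will unpack the three summands appearing in the already-derived formula (\ref{4.50}), which I rewrite for convenience as $S_4 = T_1 + T_2 + T_3$ in that order. The middle term
\[
 T_2 = -N^{-1}\kappa\bigl(2\la g''\psi_1^{\per},\p\psi_0^{\per}/\p\xi_2\ra - \la g''\psi_1^{\per},\psi_0^{\per}\ra\bigr)\sum_j\eps_j
\]
already carries the explicit prefactor $\kappa\sum\eps_j$ that appears in the target inequality (\ref{4.30}), while $T_1$ and $T_3$ depend quadratically on $\eps$ via the auxiliary function $u$ solving (\ref{4.48}). Since $\eps_j\in[0,\kappa]$ entails $\sum\eps_j^2\leqslant \kappa\sum\eps_j$, the expectation is that $T_2$ delivers the main positive contribution with constant $\pi/2$, and $T_1+T_3$ produces a controllable negative correction with constant $16a/\pi^2$ coming from the support width of $g$.

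The key simplifying observation is that the ground state $\psi_0^{\per}(\xi_2)=\sqrt{2/(\pi l)}\sin\xi_2$ depends only on $\xi_2$, so both eigenvalue equations (\ref{4.34}) and (\ref{4.48}) separate variables in the Dirichlet sine basis $\phi_k(\xi_2)=\sqrt{2/\pi}\sin(k\xi_2)$ of $L_2(0,\pi)$: the common right-hand side is proportional to $\cos\xi_2$, whose $\phi_k$-coefficients are $\beta_k=4k/(\pi(k^2-1))$ for even $k\geqslant2$ and zero otherwise, with $\sum_k\beta_k^2=1$ by Parseval. Consequently $\psi_1^{\per}$ and $u$ are supported on the even modes, and their Fourier coefficients in $\xi_2$ are determined by the resolvents of $-\p_{\xi_1}^2+k^2-1$ — periodic on $[0,l]$ for $\psi_1^{\per}$, Neumann on $[0,L]$ for $u$. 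As an immediate byproduct, the $k=1$ mode of $\psi_1^{\per}$ vanishes, so $\la g''\psi_1^{\per},\psi_0^{\per}\ra=0$, which collapses $T_2$ to $-2N^{-1}\kappa\la g''\psi_1^{\per},\p\psi_0^{\per}/\p\xi_2\ra\sum\eps_j$.

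Using (\ref{4.34}), the remaining inner product in $T_2$ equals $-\|(\cH^{\per}(0)-1)^{-1/2}(g''\p_{\xi_2}\psi_0^{\per})\|^2\leqslant 0$, so $T_2\geqslant 0$; expanding this squared resolvent norm mode by mode and performing one integration by parts in $\xi_1$ (using $g\in C_0^4$ to pass from $\int g''\cdot$ to $-\int g'\cdot$, thereby producing the factor $\|g'\|_{L_2(0,l)}^2$) yields a lower bound of the shape $T_2\geqslant(\pi/2)\|g'\|_{L_2(0,l)}^2L^{-1}\kappa\sum\eps_j$ up to higher-order terms, the value $\pi/2$ arising from $\int_0^\pi\cos^2\xi_2\,d\xi_2=\pi/2$ combined with $\sum\beta_k^2=1$. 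For $T_1+T_3$ I use $\p_{\xi_2}(\xi_2\psi_0^{\per})=\psi_0^{\per}+\xi_2\p_{\xi_2}\psi_0^{\per}$ to combine them into
\[
 T_1+T_3 = N^{-1}\sum_j\eps_j\Bigl\la g''_j u,\,\sqrt{2/(\pi l)}\bigl[(2-\xi_2)\cos\xi_2-\sin\xi_2\bigr]\Bigr\ra,
\]
expand $u$ in the same sine basis (noting that $b_1=c_1=0$ by the same orthogonality argument), and then estimate via Cauchy--Schwarz and the resolvent identities; the localization $\supp g\subset[0,a]$ — used through estimates like $\|g\|_{L_2(0,l)}^2\leqslant a\|g\|_{C[0,l]}^2$ combined with the normalization (\ref{2.3}) — converts these quadratic-in-$\eps$ contributions into an upper bound $|T_1+T_3|\leqslant(16a/\pi^2)\|g'\|_{L_2(0,l)}^2 L^{-1}\kappa\sum\eps_j$ plus lower-order remainders.

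Combining the two estimates gives (\ref{4.30}) directly, and the hypothesis $a<\pi^3/32$ is precisely the condition $\pi/2>16a/\pi^2$ that makes the combined constant positive. The principal obstacle is sharp constant tracking on both sides: the main-term constant $\pi/2$ has to be obtained without loss from the $\xi_2$-Parseval identity, while the correction constant $16/\pi^2$ has to come out exactly from the mode-by-mode bound on $T_1+T_3$ using the support width $a$ and the spectral gap $4\pi^2/L^2$ of $\cH_0(0,N)$ on $(\psi_0^{\per})^\perp$. It is this book-keeping of constants — not the overall structural argument — that makes the lemma technically nontrivial, as foreshadowed by the comment in Section~\ref{s:comparison} on the irrational dependence of the operator on $\rho$.
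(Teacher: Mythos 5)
Your structural setup (writing $S_4=T_1+T_2+T_3$ from (\ref{4.50}), noting $\la g''\psi_1^{\per},\psi_0^{\per}\ra=0$ by the $\xi_2\mapsto\pi-\xi_2$ antisymmetry, recognizing that the $u$-terms are quadratic in $\eps$ and controlled by $\sum_j\eps_j^2\leqslant\kappa\sum_j\eps_j$) is correct, but the allocation of the two constants $\tfrac{\pi}{2}$ and $\tfrac{16a}{\pi^2}$ to $T_2$ and $T_1+T_3$ respectively is not, and that misallocation is not a cosmetic defect: your two claimed bounds are in general false under the lemma's hypothesis and would not assemble into (\ref{4.30}). In the paper's computation, the $\psi_1^{\per}$-term produces the coefficient $2\|g'\|_{L_2(0,l)}^2$ (not $\tfrac{\pi}{2}\|g'\|^2$), with the support of $g$ entering precisely there to yield the correction $-\tfrac{16a}{\pi^2}\|g'\|^2$. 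The $u$-terms, after exploiting the antisymmetry of $u$ about $\xi_2=\pi/2$ to rewrite
\[
T_1+T_3=\bigl(2-\tfrac{\pi}{2}\bigr)\,N^{-1}\sum_{j=1}^{N}\eps_j\Big\la g''_j u,\tfrac{\p\psi_0^{\per}}{\p\xi_2}\Big\ra_{L_2(D_0(j-1,j))},
\]
are bounded below by $-(2-\tfrac{\pi}{2})\frac{\|g'\|^2_{L_2(0,l)}}{L}\kappa\sum_j\eps_j$; this bound has \emph{no} $a$-dependence. The final constant $\tfrac{\pi}{2}$ is then a genuine cancellation $2-(2-\tfrac{\pi}{2})=\tfrac{\pi}{2}$ between the two contributions, not a spectral identity $\int_0^\pi\cos^2\xi_2\,\di\xi_2=\tfrac{\pi}{2}$ coming from $T_2$ alone.

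To see that your scheme actually fails: if $T_2\geqslant\tfrac{\pi}{2}\|g'\|^2 L^{-1}\kappa\sum_j\eps_j$ were true, then since $T_2$ equals $2\|g'\|^2 L^{-1}\kappa\sum_j\eps_j$ plus the negative sum $\sum_m\bigl((\b_m^{(+)})^2+(\b_m^{(-)})^2\bigr)B_m$-term, and the sharp lower bound on that sum is $-\tfrac{16a}{\pi^2}\|g'\|^2$, one would need $2-\tfrac{16a}{\pi^2}\geqslant\tfrac{\pi}{2}$, i.e.\ $a\leqslant\tfrac{\pi^2(4-\pi)}{32}\approx 0.27$, which is a strictly stronger restriction than the lemma's hypothesis $a<\tfrac{\pi^3}{32}\approx 0.97$. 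Symmetrically, your claimed bound $|T_1+T_3|\leqslant\tfrac{16a}{\pi^2}\|g'\|^2 L^{-1}\kappa\sum_j\eps_j$ cannot come out of the Cauchy--Schwarz argument you sketch: after the antisymmetry reduction, $T_1+T_3$ only needs (and only admits) a \emph{lower} bound, derived by expanding $u$ in the odd modes $\sin 2m\xi_2$, writing the reduced boundary value problems for $U_m$ on $(0,L)$ with Neumann conditions, and proving the nontrivial sign fact $\bigl(\sum_j\eps_j g_j, U_m\bigr)_{L_2(0,L)}\geqslant 0$. That positivity, not a norm bound on $u$, is what makes the $u$-contribution controllable; a blind Cauchy--Schwarz would lose the factor $2-\tfrac{\pi}{2}$ and introduce $a$-dependence where none exists. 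The missing ideas in your proposal are (i) the antisymmetry collapse of $T_1+T_3$ to a single inner product with coefficient $2-\tfrac{\pi}{2}$, (ii) the one-sided positivity argument for the $U_m$ modes, and (iii) the cancellation between the $2$ from $T_2$ and the $2-\tfrac{\pi}{2}$ from $T_1+T_3$.
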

\begin{proof}
Since
\begin{equation}\label{4.65}
\frac{\p\psi_0}{\p\xi_2}(\xi_1,\pi-\xi_2)= -\frac{\p\psi_0}{\p\xi_2}(\xi_1,\xi_2),
\end{equation}
the solutions $\psi_1^{\per}$, $u$ to the equations (\ref{4.34}), (\ref{4.48}) satisfy the same identity,
\begin{equation}\label{4.66}
\psi_1^{\per}(\xi_1,\pi-\xi_2)= -\psi_1^{\per}(\xi_1,\xi_2),\quad
u(\xi_1,\pi-\xi_2)= -u(\xi_1,\xi_2).
\end{equation}
It implies
\begin{align}
& \sum\limits_{j=1}^{N} \eps_j \Big\la g''_j u, \frac{\p}{\p\xi_2} (\xi_2\psi_0^{\per}) \Big\ra_{L_2(D_0(j-1,j))}= \frac{\pi}{2}
\sum\limits_{j=1}^{N} \eps_j \Big\la g''_j u, \frac{\p}{\p\psi_0^{\per}} \Big\ra_{L_2(D_0(j-1,j))},\nonumber
\\
&
\begin{aligned}
S_4(\eps,{\kappa})=&
\left(2-\frac{\pi}{2}\right)N^{-1} \sum\limits_{j=1}^{N} \eps_j \Big\la g''_j u,\frac{\p\psi_0^{\per}}{\p\xi_2}\Big\ra_{L_2(D_0(j-1,j))}
\\
&-2N^{-1}{\kappa}
 \Big\la\psi_1^{\per},g''\frac{\p\psi_0^{\per}}{\p\xi_2}
\Big\ra_{L_2(D_0(1))} \sum\limits_{j=1}^{N}\eps_j.
\end{aligned}\label{4.81}
\end{align}
Let us estimate the first term in (\ref{4.66}). We integrate by parts taking into consideration (\ref{4.65}), (\ref{4.66}),
\begin{equation}
\begin{aligned}
&\sum\limits_{j=1}^{N} \eps_j \Big\la u, g''_j \frac{\p\psi_0^{\per}}{\p\xi_2}\Big\ra_{L_2(D_0(j-1,j))}=
\sum\limits_{j=1}^{N} \eps_j \Big\la \frac{\p^2 u}{\p\xi_1^2},g_j\frac{\p\psi_0^{\per}}{\p\xi_2}\Big\ra_{L_2(D_0(j-1,j))}
\\
& =
\sum\limits_{j=1}^{N} \eps_j^2 \Big\la g_j''\frac{\p\psi_0 u}{\p\xi_2},g_j\frac{\p\psi_0^{\per}}{\p\xi_2}\Big\ra_{L_2(D_0(j-1,j))}
- \sum\limits_{j=1}^{N} \eps_j^2 \Big\la \frac{\p^2 u}{\p\xi_2}+u,g_j\frac{\p\psi_0^{\per}}{\p\xi_2}\Big\ra_{L_2(D_0(j-1,j))}
\\
& =
-\frac{\|g'\|_{L_2(0,l)}^2}{l}\sum\limits_{j=1}^{N} \eps_j^2 +2
\Big(\frac{\p u}{\p \xi_2}(\cdot,\pi),\sum\limits_{j=1}^{N} \eps_j g_j\Big)_{L_2(0,L)}.
\end{aligned}\label{4.67}
\end{equation}
We construct the function $u$ by the separation of variables,
\begin{align}
&\frac{\p\psi_0^{\per}}{\p\xi_2}
=\sum\limits_{m=1}^{\infty} \a_m\sin 2m \xi_2,\quad \a_m=\sqrt{\frac{2}{\pi l}}\frac{4m}{4m^2-1},\nonumber,
\\
&u(\xi)=\sum\limits_{m=1}^{\infty} \a_m U_m(\xi_2)\sin 2m\xi_1,\label{4.68}
\end{align}
where the functions $U_m$ solve the boundary value problems,
\begin{equation*}
-U''_m+(4m^2-1)U_m=-\sum\limits_{j=1}^{N}\eps_j g''_j\quad\text{in}\quad L_2(0,L),\qquad U'_m(0)=U'_m(L)=0.
\end{equation*}
It follows from (\ref{4.67}), (\ref{4.68}) that
\begin{equation}\label{4.69}
\begin{aligned}
\sum\limits_{j=1}^{N}\eps_j \Big\la u,g''_j\frac{\p\psi_0^{\per}}{\p\xi_2}\Big\ra=-\frac{\|g'\|_{L_2(0,l)}^2}{l} \sum\limits_{j=1}^{N}\eps_j^2 + 4 \sum\limits_{m=1}^{\infty} m\a_m \Big( \sum\limits_{j=1}^{N}\e_j g_j, U_m\Big)_{L_2(0,L)}. \end{aligned}
\end{equation}
We represent $U_m$ as
\begin{equation*}
U_m=\sum\limits_{j=1}^{N}\eps_j g_j +(4m^2-1) \widetilde{U}_m,
\end{equation*}
where $\widetilde{U}_m$ is the solution to the problem
\begin{equation*}
-\widetilde{U}''_m+(4m^2-1)\widetilde{U}_m=-\sum\limits_{j=1}^{N}\eps_j g_j\quad\text{in}\quad L_2(0,L),\qquad \widetilde{U}'_m(0)=\widetilde{U}'_m(L)=0.
\end{equation*}
Thus,
\begin{align*}
&\|\widetilde{U}'_m\|_{L_2(0,l)}^2+(4m^2-1) \|\widetilde{U}_m\|_{L_2(0,l)}^2=-\Big(\sum\limits_{j=1}^{N}\eps_j g_j, \widetilde{U}_m\Big)_{L_2(0,L)},
\\
&\Big( \sum\limits_{j=1}^{N}\e_j g_j, U_m\Big)_{L_2(0,L)}=\|g\|_{L_2(0,l)}^2 \sum\limits_{j=1}^{N} \eps_j^2 + (4m^2-1) \Big( \sum\limits_{j=1}^{N}\e_j g_j, \widetilde{U}_m\Big)_{L_2(0,L)},
\\
&(4m^2-1)\|\widetilde{U}_m\|_{L_2(0,L)}\leqslant \Big\|
\sum\limits_{j=1}^{N}\eps_j g_j\Big\|_{L_2(0,L)}= \|g\|_{L_2(0,l)} \sum\limits_{j=1}^{N} \eps_j^2.
\end{align*}
Two last relations imply
\begin{equation*}
\Big( \sum\limits_{j=1}^{N}\e_j g_j, U_m\Big)_{L_2(0,L)}\geqslant 0,
\end{equation*}
and by (\ref{4.69}) it yields
\begin{equation}\label{4.70}
\sum\limits_{j=1}^{N}\eps_j \Big\la u, g''_j \frac{\p\psi_0}{\p\xi_2}\Big\ra_{L_2(D_0(j-1,j))} \geqslant -\frac{\|g\|_{L_2(0,l)}^2}{l} \sum\limits_{j=1}^{N}\eps_j^2.
\end{equation}

We proceed to the second term in the right hand side of (\ref{4.66}). We construct $\psi_1^{\per}$ as
\begin{equation}\label{4.71}
\psi_1^{\per}=g\frac{\p\psi_0^{\per}}{\p\xi_2}+\sqrt{\frac{2}{\pi l}}\widetilde{\psi}_1^{\per},
\end{equation}
where $ \widetilde{\psi}_1^{\per}$ solves the boundary value problem
\begin{equation}\label{4.72}
(\D+1)\widetilde{\psi}_1^{\per}=0\quad\text{in}\quad D_0(1),\qquad \widetilde{\psi}_1^{\per}=g\quad\text{as}\quad{\xi_2=\pi},\qquad \widetilde{\psi}_1^{\per}=-g\quad\text{as}\quad{\xi_2=0},
\end{equation}
and satisfies periodic boundary condition on $\xi_1=0$ and $\xi_1=l$. Hence,
\begin{equation}\label{4.73}
\begin{aligned}
-\Big\la\psi_1^{\per},g''\frac{\p\psi_0^{\per}}{\p\xi_2}
\Big\ra_{L_2(D_0(1))}=&\frac{\|g'\|_{L_2(0,l)}^2}{l}- \sqrt{\frac{2}{\pi l}}
\Big\la\widetilde{\psi}_1^{\per},g''\frac{\p\psi_0^{\per}}{\p\xi_2}
\Big\ra_{L_2(D_0(1))}
\\
=&\frac{\|g'\|_{L_2(0,l)}^2}{l}+\sqrt{\frac{2}{\pi l}}
\Big\la \frac{\p\widetilde{\psi}_1^{\per}}{\p\xi_1}, g'\frac{\p\psi_0^{\per}}{\p\xi_2}
\Big\ra_{L_2(D_0(1))}.
\end{aligned}
\end{equation}
Now we construct $\widetilde{\psi}_1^{\per}$ by the separation of variables,
\begin{equation}\label{4.74}
\frac{\p\widetilde{\psi}_1^{\per}}{\p\xi_1}=\sum\limits_{m=1}^{\infty} \left(
\b_m^{(+)}\cos\frac{2\pi m}{l}\xi_1+ \b_m^{(-)} \sin\frac{2\pi m}{l}\xi_1\right) V_m(\xi_2),
\end{equation}
where $\b_m^{(\pm)}$ are introduced as the coefficients of the Fourier series for $g'$,
\begin{equation}\label{4.77}
g'(\xi_1)=\sum\limits_{m=1}^{\infty} \left(
\b_m^{(+)}\cos\frac{2\pi m}{l}\xi_1+ \b_m^{(-)} \sin\frac{2\pi m}{l}\xi_1\right).
\end{equation}
The functions $V_m$ are defined as the solutions to the boundary value problems
\begin{equation}\label{4.75}
-V''_m+\left(\frac{4\pi^2 m^2}{l^2}-1\right) V_m=0\quad \text{in}\quad (0,\pi),\qquad V_m(0)=-1,\quad V_m(\pi)=1.
\end{equation}
The functions $V_m$ can be found explicitly,
\begin{equation}\label{4.76}
\begin{aligned}
& V_m(\xi_2)=\frac{\sinh A_m\left(\xi_2-\frac{\pi}{2}\right)}{\sinh \frac{\pi A_m}{2} },\quad A_m:= \sqrt{\frac{4\pi^2 m^2}{l^2}-1},
&& \text{if}\quad \frac{4\pi^2 m^2}{l^2}>1,
\\
& V_m(\xi_2)=\frac{\sin A_m\left(\xi_2-\frac{\pi}{2}\right)}{\sin \frac{\pi A_m}{2}}, \quad A_m:=\sqrt{1-\frac{4\pi^2 m^2}{l^2}}, &&\text{if}\quad \frac{4\pi^2 m^2}{l^2}<1,
\\
& V_m(\xi_2)=\frac{2}{\pi}\xi_2-1,&&\text{if}\quad \frac{4\pi^2 m^2}{l^2}=1.
\end{aligned}
\end{equation}
We substitute (\ref{4.74}) and (\ref{4.77}) into (\ref{4.73}),
\begin{equation}\label{4.78}
-2\Big\la\psi_1^{\per},g''\frac{\p\psi_0^{\per}}{\p\xi_2}
\Big\ra_{L_2(D_0(1))}=\frac{2\|g'\|_{L_2(0,l)}^2}{l}+ \sum\limits_{m=1}^{\infty} \Big(\big(\b_m^{(+)}\big)^2+\big(\b_m^{(-)}\big)^2\Big) B_m,
\end{equation}
where
\begin{equation}\label{4.79}
\begin{aligned}
&B_m:=-\frac{4A_m\coth \frac{\pi A_m}{2}}{\pi (A_m^2+1) },&&\text{if}\quad \frac{4\pi^2 m^2}{l^2}>1,
\\
&B_m:=-\frac{4A_m\cot \frac{\pi A_m}{2}}{\pi (1-A_m^2) },&&\text{if}\quad \frac{4\pi^2 m^2}{l^2}<1,
\\
&B_m:=-\frac{8}{\pi^2},&&\text{if}\quad \frac{4\pi^2 m^2}{l^2}=1.
\end{aligned}
\end{equation}
In view of the estimates
\begin{equation*}
\frac{4z\coth \frac{\pi z}{2}}{\pi\sqrt{z^2+1}}\leqslant \frac{4}{\pi},\quad z\geqslant 0,\qquad \frac{4z\cot \frac{\pi z}{2}}{\pi\sqrt{1-z^2}}\leqslant \frac{8}{\pi^2},\quad z\in[0,1],
\end{equation*}
the coefficients $B_m$ can be estimated as follows,
\begin{equation*}
0>B_m>-\frac{2l}{\pi^2 m},\quad \text{if}\quad 2m>\frac{l}{\pi},\qquad 0>B_m>-\frac{4l}{\pi^3 m}, \quad \text{if}\quad 2m\leqslant \frac{l}{\pi}.
\end{equation*}
We substitute these estimate in (\ref{4.78}),
\begin{align*}
-2\Big\la  \psi_1^{\per}, g''\frac{\p\psi_0^{\per}}{\p\xi_2} \Big\ra_{L_2(D_0(1))}\geqslant & \frac{2\|g'\|_{L_2(0,l)^2}}{l} - \frac{4l}{\pi^3} \sum\limits_{2m\leqslant \frac{l}{\pi}} \frac{\big(\b_m^{(+)}\big)2+\big(\b_m^{(-)}\big)^2}{m}
\\
&-\frac{2l}{\pi^2} \sum\limits_{2m>\frac{l}{\pi}} \frac{\big(\b_m^{(+)}\big)2+\big(\b_m^{(-)}\big)^2}{m}
\\
\geqslant & \frac{2\|g'\|_{L_2(0,l)}^2}{l} - \frac{2l}{\pi^2}
\sum\limits_{m=1}^{\infty} \frac{\big(\b_m^{(+)}\big)^2+\big(\b_m^{(-)}\big)^2}{m}.
\end{align*}
Together with (\ref{4.70}), (\ref{4.81}) and an obvious estimate $\eps_j\leqslant {\kappa}$ it yields
\begin{equation}\label{4.80}
S_4(\eps,{\kappa})\geqslant \left(\frac{\pi}{2} \|g'\|_{L_2(0,l)}^2-\frac{4l^2}{\pi^2}\sum\limits_{m=1}^{\infty} \frac{\big(\b_m^{(+)}\big)^2+\big(\b_m^{(-)}\big)^2}{m}\right)\frac{{\kappa}}{L}
\sum\limits_{j=1}^{N}\eps_j
\end{equation}
It remains to estimate the last sum in this expression.

We introduce the function
\begin{align*}
h(\xi_1):=&\sum\limits_{m=1}^{\infty} \frac{1}{m}
\left(
\b_m^{(+)}\cos\frac{2\pi m}{l}\xi_1+ \b_m^{(-)} \sin\frac{2\pi m}{l}\xi_1\right)+\frac{2\pi \b_0^+}{l},
\\
\b_0^+:=&\frac{1}{l}\int\limits_{0}^{l} g(\xi_1)\di\xi_1,
\end{align*}
and by the definition (\ref{4.77}) of the coefficients $\b_m^{(\pm)}$ we see that
\begin{align}
&\|g\|_{L_2(0,l)}^2=\frac{l}{2}\sum\limits_{m=1}^{\infty} \left(
\frac{l}{2\pi m}\right)^2 \Big(\big(\b_m^{(+)}\big)^2 +\big(\b_m^{(-)}\big)^2\Big)+\big(\b_0^{(+)}\big)^2 l,\nonumber
\\
&\|h\|_{L_2(0,l)}^2=\frac{l}{2} \sum\limits_{m=1}^{\infty} \frac{\big(\b_m^{(+)}\big)2+\big(\b_m^{(-)}\big)^2}{m} +\left(\frac{2\pi\b_0^{(+)}}{l}\right)^2 =\frac{4\pi^2}{l^2}\|g\|_{L_2(0,l)}^2,\label{4.82}
\\
&\frac{4l^2}{\pi^2} \sum\limits_{m=1}^{\infty} \frac{\big(\b_m^{(+)}\big)^2+\big(\b_m^{(-)}\big)^2}{m} = \frac{8l}{\pi^2} (h,g')_{L_2(0,l)}.
\end{align}
Now we use the fact that $\supp g$ lies in a segment of the size $a$ and an obvious estimate
\begin{equation*}
\|g'\|_{L_2(0,l)}\geqslant \frac{\pi}{a} \|g\|_{L_2(0,l)},
\end{equation*}
to obtain
\begin{align*}
-\frac{4l^2}{\pi^2}\sum\limits_{m=1}^{\infty} \frac{\big(\b_m^{(+)}\big)^2+\big(\b_m^{(-)}\big)^2}{m}&\geqslant -\frac{8l}{\pi^2}\|h\|_{L_2(0,l)} \|g'\|_{L_2(0,l)}
\\
&\geqslant -\frac{16}{\pi} \|g\|_{L_2(0,l)} \|g'\|_{L_2(0,l)} \geqslant -\frac{16 a}{\pi^2} \|g'\|_{L_2(0,l)}^2.
\end{align*}
We substitute this inequality into (\ref{4.80}) and complete the proof.
\end{proof}

We substitute the inequalities (\ref{4.30}), (\ref{4.44}), (\ref{4.32}), and a trivial one
\begin{equation*}
|\eps|_2\leqslant \sum\limits_{j=1}^{N}\eps_j
\end{equation*}
into (\ref{4.42}),
\begin{equation*}
\lambda(\rho)-\lambda^{\per}({\kappa})
\geqslant \left(
\frac{\|g'\|_{L_2(0,l)}^2}{2L} \left(\frac{\pi}{2}-\frac{16a}{\pi^2}\right)
-C{\kappa} L^{9/2}\right){\kappa}\sum\limits_{j=1}^{N}\eps_j.
\end{equation*}
Choosing the constant $\delta$ in (\ref{2.1}) small enough, we arrive at the estimate (\ref{2.2}).

\section*{Acknowledments}
This research was partially supported by the Deutsche Forschungsgemeinschaft
through the Emmy-Noether-Programme.
D.I. Borisov was partially supported by Russian Foundation for Basic
Researches and Federal Task Program ``Scientific and Pedagogical
Staff of Innovative Russia for 2009-2013''.

\def\cprime{$'$}\def\polhk#1{\setbox0=\hbox{#1}{\ooalign{\hidewidth
  \lower1.5ex\hbox{`}\hidewidth\crcr\unhbox0}}}

\end{document}